\numberwithin{equation}{section}
\newtheorem{thm}{Theorem}[section]
\newtheorem{cor}[thm]{Corollary}
\newtheorem{lem}[thm]{Lemma}
\newtheorem{prop}[thm]{Proposition}
\newtheorem{defn}[thm]{Definition}
\newtheorem{exam}[thm]{Example}
\newtheorem{rem}[thm]{Remark}
\newcommand{\Ann}{\mbox{Ann}\,}
\newcommand{\coker}{\mbox{Coker}\,}
\newcommand{\Hom}{\mbox{Hom}\,}
\newcommand{\Ext}{\mbox{Ext}\,}
\newcommand{\Tor}{\mbox{Tor}\,}
\newcommand{\Spec}{\mbox{Spec}\,}
\newcommand{\Ass}{\mbox{Ass}\,}
\newcommand{\Supp}{\mbox{Supp}\,}
\newcommand{\gr}{\mbox{grade}\,}
\newcommand{\rgr}{\mbox{r.grade}\,}
\newcommand{\depth}{\mbox{depth}\,}
\renewcommand{\dim}{\mbox{dim}\,}
\newcommand{\syz}{\mbox{syz}\,}
\newcommand{\Tr}{\mbox{Tr}\,}
\newcommand{\pd}{\mbox{Pd}\,}
\newcommand{\gc}{\mbox{G}_{C}\mbox{-dim}\,}
\newcommand{\gkd}{\mbox{G}_{K}\mbox{-dim}\,}
\newcommand{\gk}{\mbox{G}_{K}\mbox{-}\,}
\newcommand{\gd}{\mbox{G-dim}\,}
\newcommand{\h}{\mbox{ht}\,}
\renewcommand{\H}{\mbox{H}}
\newcommand{\fa}{\mathfrak{a}}
\newcommand{\fb}{\mathfrak{b}}
\newcommand{\fm}{\mathfrak{m}}
\newcommand{\fp}{\mathfrak{p}}
\newcommand{\fc}{\mathfrak{c}}
\begin{document}
\bibliographystyle{amsplain}


\title[Linkage of finite Gorenstein dimension modules]
 {Linkage of finite Gorenstein dimension modules}

\bibliographystyle{amsplain}

     \author[M. T. Dibaei]{Mohammad T. Dibaei$^{1}$}
     \author[A. Sadeghi]{Arash Sadeghi$^2$}

\address{$^{1, 2}$ Faculty of Mathematical Sciences and Computer,
Tarbiat Moallem University, Tehran, Iran.}

\address{$^{1, 2}$ School of Mathematics, Institute for Research in Fundamental Sciences (IPM), P.O. Box: 19395-5746, Tehran, Iran }
\email{dibaeimt@ipm.ir}
\email{sadeghiarash61@gmail.com}

\keywords{Linkage of modules, Gorenstein dimension, reduced grade, Semidualizing module, $\gk$dimension.\\
1. M.T. Dibaei was supported in part by a grant from IPM (No. 90130110)}
\subjclass[2000]{}

\maketitle
\begin{abstract}
 For a horizontally linked module, over a commutative semiperfect Noetherian ring $R$, the connections of its invariants reduced grade,
  Gorenstein dimension and depth are studied. It is shown that under certain conditions the depth of a horizontally linked module is equal
   to the reduced grade of its linked module. The connection of the Serre condition $(S_n)$ on an $R$--module of finite Gorenstein
   dimension with the vanishing of the local cohomology groups of its linked module is discussed.

\end{abstract}

\section{Introduction}
The theory of linkage of algebraic varieties introduced by Peskine and Szpiro \cite{PS}. In \cite{MS}, Martsinkovsky and Strooker  give its
 analogous definition for modules over non--commutative semiperfect Noetherian rings by using the composition of the two
 functors:
 transpose and syzygy. These functors and their compositions were studied by Auslander and Bridger in \cite{AB}.
The Gorenstein (or $G$-) dimension was introduced by Auslander \cite{A1} and studied by Auslander and Bridger in \cite{AB}. In this
paper, we study the theory of linkage for class of modules which have finite Gorenstein dimensions. In particular, for a horizontally
linked module $M$ of finite and positive  $G$-dimension, we study
the role of its reduced grade,  $\rgr(M)$, on the depth of its linked module $\lambda M$.

In section 2, after preliminary notions and definitions, the
associated prime ideals
 of $\Ext^{\tiny{\rgr(M)}}_R(M,R)$ is determined in terms of $\lambda M$
 for any
horizontally linked module $M$. As a consequence, we determine the
reduced grade of a
 horizontally linked module of finite Gorenstein dimension. Also a characterization of a horizontally linked module to have $G$-dimension
  zero is given in Proposition \ref{Gd}. In Theorem \ref{t} , we give some equivalent conditions for a horizontally linked module $M$ of finite and positive $G$-dimension such that $\depth$ of $M$ is equal to the reduced grade of $\lambda M$.

In section 3, we study the reduced $G$-perfect modules (Definition \ref{rgr}).
For a reduced $G$-perfect module $M$ of $G$-dimension $n$ over a
Cohen-Macaulay local
ring $R$ of dimension $d$, it is shown that
$\depth_R(M)+\depth_R(\lambda M)=d+\depth_R(\Ext^n_R(M,R))$(Theorem \ref{d}). We also generalize this formula for modules of finite
$G$-dimensions (not necessarily reduced $G$-perfect) with some conditions on depth of extension modules (Theorem \ref{t1}).
In Proposition \ref{P1}, we establish a characterization for stable reduced $G$-perfect module to be horizontally linked. More precisely,
for a stable reduced $G$-perfect module $M$ of $G$-dimension $n$, it is shown that $M$ is horizontally linked if and only if
$\rgr(M)+\rgr(\lambda M)=\gr_R(\Ext^n_R(M,R))$.

In section 4, we generalize \cite[Theorem 4.1]{Sc}. More precisely,
for horizontally linked module $M$ of finite $G$-dimension over
Cohen-Macaulay local ring of dimension $d$, it is shown that $M$
satisfies the Serre condition $(S_k)$ if and only if
$\H^i_\fm(\lambda M) =0$ for all $i$, $d-k<i<d$.

Section 5 is dedicated to study the class of modules which are evenly linked by $G_K$-Gorenstein ideals, where $K$ is a semidualizing
module. We first generalize \cite[Propositions 15]{MS} and \cite[Propositions 16]{MS}. For a semidualizing module $K$  and $\gk$Gorenstein
 ideals $\fc_1$, $\fc_2$ of grade $n$ over local ring $R$, it is shown that the $\Ext^i_R(M_1,K)\cong\Ext^i_R(M_2,K)$ for all $i$, $i>n$,
 if there is an $R$--module $M$ such that
 $M_1\underset{\fc_1}\thicksim M$ and $M\underset{\fc_2}\thicksim M_2$ (Definition \ref{def1}, Proposition \ref{p4}).
As a consequence, $\gk$dimension is preserved under evenly linkage (Corollary \ref{end}).\\


Throughout the paper, $R$ is a commutative semiperfect Noetherian ring and all modules are finite (i.e. finitely generated)
$R$--modules so that any such module has a projective cover.

Let
$P_1\overset{f}{\rightarrow}P_0\rightarrow M\rightarrow 0$ be a
finite projective presentation of $M$. The transpose of $M$, $\Tr M$,
is defined to be $\coker f^*$, where $(-)^* := \Hom_R(-,R)$, which satisfies in the exact sequence
\begin{equation} \label{n1}
0\rightarrow M^*\rightarrow P_0^*\rightarrow P_1^*\rightarrow \Tr M\rightarrow 0
\end{equation}
and is unique up to projective equivalence. Thus the minimal projective
presentations of $M$ represent isomorphic transposes of $M$. Here is a straightforward application of $\Tr M$, which is
quoted from \cite[Proposition 6.3]{A}. For an $R$--module $M$ the evaluation map $e_M:M\rightarrow M^{**}$ has kernel and cokernel
isomorphic to $\Ext^1_R(\Tr M,R)$ and $\Ext^2_R(\Tr M,R)$, respectively. More precisely, one has the following exact sequence
\begin{equation}\label{m}
0\longrightarrow \Ext^1_R(\Tr M,R)\longrightarrow M\overset{e_M}{\longrightarrow} M^{**}
\longrightarrow\Ext^2_R(\Tr M,R)\longrightarrow0.
\end{equation}

Let $P\overset{\alpha}{\rightarrow}M$ be an epimorphism such that $P$ is
a projective. The syzygy module of $M$, denoted by $\Omega M$, is
the kernel of $\alpha$ which is unique up to projective equivalence.
Thus $\Omega M$ is uniquely determined, up to isomorphism, by a
projective cover of $M$.

Recall that, over a Gorenstein local ring $R$, two ideals $\fa$ and $\fb$  are said to be linked by a Gorenstein ideal $\fc$ if
 $\fc\subseteq\fa\cap\fb$, $\fa=\fc:_R\fb$ and $\fb=\fc:_R\fa$. Equivalently, the ideals
$\fa$ and $\fb$ are linked by $\fc$ if and only if the
ideals $\fa/\fc$ and $\fb/\fc$ of the ring $R/\fc$ are linked by the zero ideal of $R/\fc$. To define
linkage for modules,
Martsinkovsky and Strooker introduced the operator $\lambda=\Omega\Tr$.  In \cite[Proposition 1]{MS}, it is shown that ideals $\fa$
and $\fb$ are linked by zero ideal if and only
if $R/\fa$ and $R/\fb$ are related to each other through the operator $\lambda$; more precisely, $R/\fa\cong\lambda (R/\fb)$ and
$R/\fb\cong\lambda (R/\fa)$.
\begin{defn}\cite[Definition 3]{MS} \emph{
Two $R$--modules $M$ and $N$ are said to
be\emph{ horizontally linked} if $M\cong \lambda N$ and
$N\cong\lambda M$. Also, $M$ is called horizontally linked (to $\lambda M$)
if $M\cong\lambda^2M$.}
\end{defn}
Having defined the horizontal linkage, the general linkage for modules is defined as follows.
\begin{defn}\label{def1}
\cite[Definition 4]{MS} \emph {An $R$--module
$M$ is said to be \emph{linked} to an $R$--module $N$, by an ideal $\fc$ of $R$, if $\fc\subseteq\Ann_R(M)\cap\Ann_R(N)$ and M and
 N are horizontally
linked as $R/{\fc}$--modules. In this situation we denote $M\underset{\fc}{\thicksim}N$.}
\end{defn}
Two modules $M$ and $N$ are called \emph{stably isomorphic} and write $\underline{M}\cong\underline{N}$ if $M\oplus P\cong N\oplus Q$
for some projective
modules $P$ and $Q$. A \emph{stable} module is a module with no non-zero projective direct summands.

Let $M$ be a stable $R$--module and let $P_1\rightarrow P_0\rightarrow M\rightarrow 0$ be a minimal projective presentation of $M$.
 Then
$P_0^*\rightarrow P_1^*\rightarrow \Tr M\rightarrow 0$ is a minimal projective presentation of $\Tr M$ (see \cite[Theorem 32.13]{AF}).
We quote the following induced exact sequences
\begin{equation}\label{2}
0\longrightarrow M^*\longrightarrow P_0^*\longrightarrow \lambda M\longrightarrow0,
\end{equation}
\begin{equation}\label{1}
0\longrightarrow \lambda M\longrightarrow P_1^*\longrightarrow \Tr M\longrightarrow0.
\end{equation}

Here is a characterization of horizontally linked modules in terms of kernel of the evaluation map $e_M$
 \cite[Theorem 2 and Proposition 3]{MS}.
\begin{thm}\label{MS}
An $R$--module $M$ is horizontally linked if and only if it is stable and \emph{$\Ext^1_R(\Tr M,R)=0$}.
\end{thm}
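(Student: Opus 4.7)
The plan is to use the exact sequences \eqref{m}, \eqref{2}, and \eqref{1} together with the structure theory of minimal projective presentations over a semiperfect ring: projective covers exist, their kernels have no projective summand, and Krull--Schmidt holds for finitely generated modules.

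For the forward direction, assume $M\cong\lambda^2 M=\Omega\Tr(\lambda M)$. Fixing a minimal projective cover $P\twoheadrightarrow \Tr(\lambda M)$ realises $M$ as its kernel; since this kernel has no projective summand, $M$ is stable. Moreover $M$ embeds into the finitely generated projective module $P$, and naturality of the evaluation map, together with the reflexivity of $P$, forces $e_M$ to be injective. By \eqref{m} this gives $\Ext^1_R(\Tr M,R)=\ker e_M=0$.

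For the reverse direction, assume $M$ is stable and $\Ext^1_R(\Tr M,R)=0$. Fix a minimal projective presentation $P_1\to P_0\xrightarrow{\pi} M\to 0$ and a projective cover $g_0\colon Q\twoheadrightarrow M^*$, and let $g\colon Q\to P_0^*$ be the composition of $g_0$ with the inclusion $M^*\hookrightarrow P_0^*$ from \eqref{2}. Then $Q\xrightarrow{g} P_0^*\twoheadrightarrow \lambda M\to 0$ is a projective presentation of $\lambda M$, so up to projective equivalence $\Tr(\lambda M)=\coker(g^*\colon P_0\to Q^*)$ and $\lambda^2 M=\im(g^*)$. Since $\im g\subseteq P_0^*$ equals $M^*$, whose elements (viewed as homomorphisms $P_0\to R$) are precisely those factoring through $\pi$, a direct element-chase gives
$$\ker(g^*)=\pi^{-1}(\ker e_M).$$
The hypothesis together with \eqref{m} gives $\ker e_M=0$, so $\ker(g^*)=\ker\pi=\Omega M$, whence $\lambda^2 M\cong P_0/\Omega M\cong M$ up to stable isomorphism. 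As both sides are stable, Krull--Schmidt upgrades this to a genuine isomorphism $M\cong\lambda^2 M$, i.e., $M$ is horizontally linked.

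The main technical step is the identification $\ker(g^*)=\pi^{-1}(\ker e_M)$; everything else is functorial bookkeeping with projective presentations, and the hypothesis $\Ext^1_R(\Tr M,R)=0$ plays its role through \eqref{m}.
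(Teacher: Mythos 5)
A preliminary remark: the paper offers no proof of this statement; it is quoted from \cite[Theorem 2 and Proposition 3]{MS}, so your argument can only be measured against the source, not against anything in the text.

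Your proof has a genuine gap, and it is the same gap in both directions: you assert, as part of the ``structure theory of minimal projective presentations over a semiperfect ring,'' that the kernel of a projective cover has no projective direct summand. This is false. Over $R=k[[t]]$ (local, hence semiperfect, Noetherian) the projective cover of $R/(t)$ is $R\twoheadrightarrow R/(t)$, and its kernel is $(t)=tR\cong R$, which is projective. Consequently the forward direction's deduction ``$M\cong\Omega\Tr(\lambda M)$ is the kernel of a projective cover, hence stable'' is unsupported; and in the backward direction the closing appeal to Krull--Schmidt needs $\lambda^2M$ to be stable, which is exactly the same unproved claim. (Invoking Krull--Schmidt for \emph{all} finitely generated modules over a semiperfect ring is also too strong --- it can fail over non-complete local rings --- although the cancellation of projective summands against stable modules that you actually need does hold here.) The statement that is true, and that must be proved, is that $\lambda N$ is stable whenever $N$ is stable and torsionless: if $Q$ were a nonzero projective summand of $\lambda N=P_0^*/N^*$, where $P_0^*\to\lambda N$ is a projective cover by minimality of the dualized presentation, then $P_0^*\cong Q\oplus P'$ with $N^*\subseteq P'$, and dualizing the vanishing of $N^*\hookrightarrow P_0^*\twoheadrightarrow Q$ puts the direct summand $Q^*$ of $P_0$ inside $\Ker(P_0\to N^{**})=\Omega N\subseteq \mathrm{rad}(P_0)$ (torsionlessness of $N$ is used here), contradicting Nakayama. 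Applied to the stable part of $\lambda M$, which is torsionless by (\ref{1}), this shows $\lambda^2M$ is always stable and closes both gaps.

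The rest of your backward direction is sound and is essentially the argument behind \cite[Lemma 4]{MS}, i.e.\ the exact sequence (\ref{e1}): your presentation $Q\xrightarrow{g}P_0^*\to\lambda M\to 0$ is in fact the \emph{minimal} presentation of $\lambda M$, and the element chase $\Ker(g^*)=\pi^{-1}(\Ker e_M)$ is correct (it uses only the surjectivity of $g_0$, hence the injectivity of $g_0^*$). With the stability of $\lambda^2 M$ supplied as above --- or, more directly, with \cite[Theorem 32.13]{AF} applied to the stable part of $\lambda M$ so as to identify $\im(g^*)$ with $\Omega\Tr(\lambda M)$ on the nose rather than merely up to stable isomorphism --- the proof goes through. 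The forward direction's derivation of $\Ext^1_R(\Tr M,R)=0$ from the embedding $M\hookrightarrow P$ via naturality of the evaluation map and (\ref{m}) is fine as written.
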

The Gorenstein dimension was introduced by Auslander \cite{A1}, and developed by Auslander and Bridger in \cite{AB}.
\begin{defn}
An $R$--module $M$ is said to belong to the \emph{$G$}-class, $G(R)$, whenever
\begin{itemize}
            \item[(i)]{\emph{the biduality map $M\rightarrow M^{**}$ is an isomorphism;}}
            \item[(ii)]{\emph{$\Ext^i_R(M,R)=0$ for all $i>0$;}}
            \item[(iii)]{\emph{$\Ext^i_R(M^*,R)=0$ for all $i>0$.}}
\end{itemize}
\end{defn}
From the exact sequences (\ref{m}), (\ref{2}) and (\ref{1}) it is easy to see that $M$ is in $G(R)$ if and only if
$\Ext^i_R(M,R)=0=\Ext^i_R(\Tr M,R)$ for all $i>0$.
The class $G(R)$ is quite large, it contains all finitely generated projective modules.
Note that if $\underline{M}\cong\underline{N}$ then $M$ is in $G(R)$ if and only if $N$ is in $G(R)$.
In general, since $\underline{M}\cong\underline{\Tr\Tr M}$, it follows that $M$ is in $G(R)$ if and only if $\Tr M$ is in $G(R)$
(see \cite[Lemma 4.9]{AB}), and so if $M$ is a stable $R$--module then $M$ is in $G(R)$ if and only if $\lambda M$ is in $G(R)$
and $M$ is horizontally linked (c.f. \cite[Theorem 1]{MS}).

Trivially any $R$--module $M$ has a G-resolution which is a right acyclic complex of modules in $G(R)$ whose $0$th
homology module is $M$. The module $M$ is said to have finite G-dimension, denoted by $\gd_R(M)$, if
it has a G-resolution of finite length. Note that $\gd_R(M)$ is bounded above by the projective dimension, $\pd_R(M)$, of $M$ and if
$\pd_R(M)<\infty$ then the equality holds.

For module $M$ of finite Gorenstein dimension, $\gd_R(M)$ can be expressed as follows.
\begin{thm}\cite[Theorem 29 and Lemma 23]{M1}\label{Gor}
For an $R$--module $M$ of finite G-dimension, the following statements hold true.
\begin{itemize}
       \item[(i)] \emph{$\gd_R(M)=\sup\{i\geq0\mid\Ext^i_R(M,R)\neq0\}$,}
        \item[(ii)] If $R$ is local, then \emph{$\gd_R(M)=\depth R-\depth_R(M)$.}
\end{itemize}
\end{thm}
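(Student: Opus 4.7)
The plan is to prove both parts by induction on $n = \gd_R(M)$, reducing to the case $M \in G(R)$ via a short exact sequence
\[
0 \longrightarrow K \longrightarrow G \longrightarrow M \longrightarrow 0
\]
with $G \in G(R)$ and $\gd_R(K) = n-1$, which exists by the definition of finite $G$-dimension.

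\textbf{Part (i).} Set $\varphi(M) := \sup\{i \geq 0 \mid \Ext^i_R(M,R) \neq 0\}$. When $n=0$, membership $M \in G(R)$ kills $\Ext^i_R(M,R)$ for every $i \geq 1$, while the biduality isomorphism $M \cong M^{**}$ forces $M^* \neq 0$ for nonzero $M$, so $\varphi(M)=0$. When $n \geq 1$, I dualize the displayed sequence; since $\Ext^i_R(G,R)=0$ for all $i \geq 1$, the long exact sequence of Ext yields
\[
\Ext^{i+1}_R(M,R) \cong \Ext^i_R(K,R) \quad \text{for all } i \geq 1,
\]
and the inductive hypothesis $\varphi(K)=n-1$ then gives $\varphi(M)=n$. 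The case $n=1$ requires a separate check that $\Ext^1_R(M,R) \neq 0$: were it zero, the dualized sequence together with $\Ext^i_R(K,R)=0$ for $i \geq 1$ would place $M$ in $G(R)$, contradicting $\gd_R(M)=1$.

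\textbf{Part (ii).} The Auslander--Bridger formula is again proved by induction on $n$, with base case $\depth_R(M)=\depth R$ for $M \in G(R)$. The easy direction $\depth_R(M) \geq \depth R$ uses biduality: a projective presentation of $M^*$ dualizes to an embedding $M \hookrightarrow F_0^*$ into a free module, so every $R$-regular sequence is $M$-regular. The reverse inequality is extracted by iterating the depth lemma along the dualized complete resolution of $M$ (which is exact on both sides because $M \in G(R)$). For the inductive step with $n \geq 1$, the depth lemma applied to $0 \to K \to G \to M \to 0$ combined with $\depth_R(G)=\depth R$ and the inductive hypothesis $\depth_R(K)=\depth R - (n-1)$ forces $\depth_R(M)=\depth R - n$; when $n \geq 2$ the gap $\depth_R(G)-\depth_R(K) \geq 2$ makes the minimum in the depth lemma sharp, and the remaining case $n=1$ is controlled by part (i), which provides $\Ext^1_R(M,R) \neq 0$ and rules out $\depth_R(M)=\depth R$.

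The principal obstacle I anticipate is the reverse inequality $\depth_R(M) \leq \depth R$ in the base case of (ii); unlike the easy direction, it is not a formal consequence of Ext-vanishing and requires exploiting the two-sided character of complete resolutions available to $G$-class modules.
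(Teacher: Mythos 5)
The paper offers no proof of this theorem --- it is imported wholesale from Masiek \cite{M1} (the content being Auslander--Bridger's) --- so the only question is whether your argument stands on its own. Part (i) does: the dimension shift $\Ext^{i+1}_R(M,R)\cong\Ext^i_R(K,R)$ along $0\to K\to G\to M\to 0$, the base case from the definition of $G(R)$, and the separate check at $n=1$ (that $\Ext^1_R(M,R)=0$ would put $M$ back in $G(R)$, using $K\in G(R)$ to recover reflexivity of $M$ and the vanishing of $\Ext^{>0}_R(M^*,R)$) constitute the standard proof, modulo the routine lemma that $\gd_R(K)$ is exactly $n-1$.

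Part (ii), however, has a genuine gap at the point you yourself flag: the inequality $\depth_R(M)\le\depth R$ for $M\in G(R)$. Iterating the depth lemma along the dualized complete resolution cannot deliver it. If you suppose $\depth_R(M)>\depth R$ and apply the depth lemma to the cosyzygy sequences $0\to C^i\to F^i\to C^{i+1}\to 0$ (with $C^0=M$), you merely learn that $\depth_R(C^i)=\depth R$ for every $i\ge1$, and on the syzygy side you get only lower bounds; no contradiction ever materializes, because the depth lemma is blind to the biduality $M\cong M^{**}$, which is where the upper bound actually comes from. The standard argument is: a maximal $R$-regular sequence is $M$-regular (by the infinite-syzygy property) and reduction modulo it preserves membership in the $G$-class, so one may assume $\depth R=0$; then $\Hom_R(k,R)\ne0$ for the residue field $k$, and dualizing a surjection $M^*\twoheadrightarrow k$ (which exists by Nakayama since $M^*\ne0$) embeds this nonzero $k$-vector space into $M^{**}\cong M$, forcing $\depth_R(M)=0$. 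The same missing ingredient resurfaces in your inductive step at $n=1$: there $\depth_R(K)=\depth_R(G)=\depth R$ and the depth lemma does not exclude $\depth_R(M)=\depth R$; the claim that $\Ext^1_R(M,R)\ne0$ ``rules this out'' is precisely the hard implication of the Auslander--Bridger formula and is not supported by anything earlier in your argument. A smaller point: dualizing only a presentation of $M^*$ exhibits $M$ as a second syzygy, which gives $\depth_R(M)\ge\min\{2,\depth R\}$; to get $\depth_R(M)\ge\depth R$ you must dualize an entire resolution of $M^*$, which stays exact because $\Ext^{>0}_R(M^*,R)=0$, so that $M$ is a $k$th syzygy for every $k$.
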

Note that if $R$ is a Gorenstein local ring then every $R$--module has finite Gorenstein dimension (see \cite[Theorem 4.20]{AB}). Thus
 Theorem \ref{Gor} implies that over Gorenstein local ring $R$, $G(R)$ is precisely the class of all maximal Cohen-Macaulay modules.

An $R$--module $M$ is called a \emph{syzygy module} if it is embedded in a projective $R$--module.
Let $i$ be a positive integer, the module $M$ is said to be an $i$th syzygy of an $R$--module $N$ (abbreviated as an $i$th syzygy) if
there exists an exact sequence
$$0\rightarrow M\rightarrow P_{i-1}\rightarrow\cdots\rightarrow P_0\rightarrow N\rightarrow 0$$ with the $P_0,\cdots,P_{i-1}$ are
projective. By convention, every module is a $0$th syzygy. In a
pioneer work, Hyman Bass \cite{B} defined reduced grade of a module
to describe modules which are an $i$th syzygies for some $i$. We
recall the definition of reduced grade from \cite[\S8]{B}.

The \emph{reduced grade} of an $R$--module $M$ is defined to be
$$\rgr(M)=\inf\{i>0\mid \Ext^i_R(M,R)\neq0\}.$$

 Note that
$\gr_R(M)=\rgr_R(M)$ if $\gr_R(M)>0$. Moreover, if $\gd_R(M)=0$ then
$\rgr(M)=\infty$. For modules of finite and positive G-dimension,
one has $\rgr(M)\leq\gd_R(M)$ and so it is finite.

Bass shows that, ``\emph{for a given positive integer $r$, a module
$H$ over a Gorenstein ring $R$ is an $r$th syzygy if and only if $H$
is reflexive and $\emph\rgr_R(H^*)\geq r-1$"} \cite[Theorem 8.2]{B}.

We are going to discuss about the reduced grade of a module which is closely related to the property denoted by
$\widetilde{S}_k$.
\begin{defn}\cite{M1}\label{S}
\emph{An $R$--module $M$ is said to satisfy the property $\widetilde{S}_k$ if $\depth_{R_\fp} (M_\fp) \geq $ min$\{ k, \depth R_\fp\}$,
for all
$\fp\in\Spec R$.}

\end{defn}
Note that if $R$ is a Cohen-Macaulay local ring and $M$ is
horizontally linked module, then $M$ satisfies $\widetilde{S}_k$ if
and only if $M$ satisfies the Serre condition $(S_k)$. Here is a
characterization for a module $M$ to satisfy $\widetilde{S}_k$ (see
\cite[Theorem 4.25]{AB} or \cite[Theorem 42]{M1}).
\begin{thm}\label{AS}
Let $k$ be a positive integer. Consider the following statements.
\begin{itemize}
           \item[(i)] \emph{$\rgr(\Tr M)>k$};
           \item[(ii)] $M$ is a $k$th syzygy;
           \item[(iii)] for each $\fp\in\emph{\Spec}(R)$, every $R_\fp$-regular sequence of length at most $k$ is also $M_\fp$-regular
           sequence;
            \item[(iv)] $M$ satisfies $\widetilde{S}_k$.
             \\Then we have the following
             \item[(a)] (i)$\Rightarrow$(ii)$\Rightarrow$(iii)$\Rightarrow$(iv);
             \item[(b)] If \emph{$\gd_R(M) < \infty$}, then $(iv)$ implies $(i)$;
             \item[(c)] If \emph{$\gd_R(M) < \infty$}, then all the statements (i)-(iv) are equivalent to $$\emph\gr_R(\emph\Ext^i_R(M,R))
             \geq i+k \text{
             for all}\  i>0.$$
          \end{itemize}
\end{thm}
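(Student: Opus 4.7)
The three parts are logically distinct and I would handle them in order. For part (a), the pivotal implication is (i)$\Rightarrow$(ii). My plan is to extend the given projective presentation of $M$ to a full projective resolution, combine it with (\ref{n1}) to obtain a projective resolution of $\Tr M$ of the form $\cdots \to Q_2 \to P_0^{*} \to P_1^{*} \to \Tr M \to 0$, and dualize. The cohomology of the resulting cocomplex $P_1 \to P_0 \to Q_2^{*} \to Q_3^{*} \to \cdots$ in positive degrees is precisely $\Ext^{i}_R(\Tr M, R)$, and the hypothesis $\rgr(\Tr M) > k$ forces exactness through position $k$; identifying $\coker(P_1 \to P_0) = M$ then produces an exact sequence $0 \to M \to Q_2^{*} \to \cdots \to Q_{k+1}^{*} \to N \to 0$ with $k$ projectives in the middle, certifying $M$ as a $k$th syzygy. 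For (ii)$\Rightarrow$(iii), I would localize this syzygy sequence at $\fp$ and observe that an $R_\fp$-regular element is automatically $M_\fp$-regular, since $M_\fp$ embeds in the torsion-free module $(P_{k-1})_\fp$; the general case follows by induction on the length of the regular sequence, reducing modulo $x_1$ and noting that the quotient is a $(k-1)$th-syzygy sequence for $M_\fp / x_1 M_\fp$ over $R_\fp / (x_1)$. Finally, (iii)$\Rightarrow$(iv) is immediate by picking an $R_\fp$-regular sequence of length $\min(k, \depth R_\fp)$ inside $\fp R_\fp$.

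Part (b), the converse (iv)$\Rightarrow$(i) under $\gd_R(M) < \infty$, is where I expect the main obstacle. My plan is to decompose the vanishing of $\Ext^{i}_R(\Tr M, R)$ for $1 \leq i \leq k$ into manageable pieces: from (\ref{m}), vanishing of $\Ext^{1}$ and $\Ext^{2}$ of $\Tr M$ is equivalent respectively to injectivity and surjectivity of the biduality map $e_M$, and dualizing (\ref{2}) and (\ref{1}) produces the recursion $\Ext^{i+2}_R(\Tr M, R) \cong \Ext^{i}_R(M^{*}, R)$ for $i \geq 1$, which reduces the higher vanishing to the condition $\rgr(M^{*}) \geq k - 1$. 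The key tool is then Theorem \ref{Gor}(ii): under finite Gorenstein dimension, $\gd_{R_\fp}(M_\fp) = \depth R_\fp - \depth M_\fp$ holds locally, so any prime $\fp$ in the support of one of the offending $\Ext$'s must force $\depth M_\fp$ to be small enough that $\widetilde{S}_k$ yields a contradiction. The most delicate step will be the support analysis for the recursive $\Ext$'s of $M^{*}$, since this requires transporting the condition $\widetilde{S}_k$ across biduality, where finite Gorenstein dimension will be used again through Theorem \ref{Gor}(i).

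For part (c), I would use parts (a) and (b) to reduce the question to showing that (i) is equivalent to $\gr_R(\Ext^{i}_R(M,R)) \geq i + k$ for all $i > 0$. Since $\gd_R(M) < \infty$, only finitely many of the $\Ext^{i}_R(M,R)$ can be nonzero by Theorem \ref{Gor}(i), so the grade condition is a finite list of inequalities. At each prime $\fp$, I expect the grade bound to translate via Theorem \ref{Gor}(ii) into a depth inequality on $M_\fp$ that matches $\widetilde{S}_k$ exactly; the subtle step is the bookkeeping of the index shift $i \mapsto i+k$, which I would try to extract from a double-complex or spectral-sequence comparison of $\Ext^{\ast}_R(\Tr M, R)$ with $\Ext^{\ast}_R(\Ext^{\ast}_R(M, R), R)$ afforded by the finite-$G$-resolution of $M$.
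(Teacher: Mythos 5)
The paper offers no proof of this statement to compare against: Theorem \ref{AS} is quoted verbatim from Auslander--Bridger \cite[Theorem 4.25]{AB} and Masiek \cite[Theorem 42]{M1}. Judged on its own terms, your part (a) is correct and essentially complete: dualizing a projective resolution $\cdots\to Q_2\to P_0^*\to P_1^*\to\Tr M\to 0$ and identifying $\coker(P_1\to P_0)=M$ does convert the vanishing of $\Ext^i_R(\Tr M,R)$ for $1\le i\le k$ into an exact sequence $0\to M\to Q_2^*\to\cdots\to Q_{k+1}^*\to N\to 0$ with exactly $k$ projectives, and your localization-and-induction for (ii)$\Rightarrow$(iii)$\Rightarrow$(iv) is the standard one (you should note explicitly that the intermediate cokernels are themselves first syzygies, so $x_1$ is regular on them and reduction modulo $x_1$ preserves exactness of the syzygy sequence).

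Parts (b) and (c), however, contain a genuine gap: the step you defer is the entire content of the theorem. Your reduction of (i) to ``$M$ is reflexive and $\rgr(M^*)\ge k-1$'' via $\Ext^{i+2}_R(\Tr M,R)\cong\Ext^i_R(M^*,R)$ is correct, but it only restates Bass's syzygy criterion; proving that $\widetilde{S}_k$ together with $\gd_R(M)<\infty$ forces reflexivity (already nontrivial for $k=2$) and forces $\Ext^i_R(M^*,R)=0$ for $1\le i\le k-2$ is a statement of the same depth as the theorem itself, and ``the support analysis \dots requires transporting $\widetilde{S}_k$ across biduality'' names the obstacle without overcoming it. The same applies to (c): the equivalence of (i) with $\gr_R(\Ext^i_R(M,R))\ge i+k$ for all $i>0$ is the key lemma of Auslander and Bridger's argument (their filtration of $M^{**}$ by the modules $\Ext^p_R(\Ext^q_R(M,R),R)$), not something to be ``extracted from a double-complex'' after the fact. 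A workable completion would prove (iv)$\Rightarrow$(grade condition of (c))$\Rightarrow$(i) by induction on $k$, localizing at a prime $\fp$ minimal in the support of an offending $\Ext$-module and invoking Theorem \ref{Gor}(ii) there to contradict $\depth_{R_\fp}(M_\fp)\ge\min\{k,\depth R_\fp\}$ --- your argument in the case $k=1$ already runs exactly this way --- but as written the converse direction is asserted, not proved.
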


Note that when $\gd_R(M)<\infty$, the above theorem of Auslander and
Bridger gives a complete description of the module $M$ to be a $k$th
syzygy, over an arbitrary ring. As any finite module over a
Gorenstein ring has finite G--dimension, this theorem may be
considered as generalization of the Bass result (see
acknowledgement).

As an immediate consequence of Theorem \ref{AS}, we have the following.
\begin{cor}\label{A}
Let $M$ be a stable $R$--module of finite G-dimension. The following statements hold true.
\begin{itemize}
\item[(i)] For a positive integer $k$, $M$ satisfies $\widetilde{S}_k$ if and only if
\emph{$\rgr(\lambda M)\geq k$} and $M$ is horizontally linked.
\item[(ii)] Assume that $M$ is a horizontally linked. Then \emph{$\gd_R(M)\neq0$} if and only if \emph{$\rgr(\lambda M)<\infty$}.
\end{itemize}

\end{cor}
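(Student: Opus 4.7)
The plan is to derive both parts from the short exact sequence (\ref{1}),
$$ 0\lo \lambda M\lo P_1^*\lo \Tr M\lo 0, $$
combined with the characterization of horizontal linkage in Theorem \ref{MS}. Since $P_1^*$ is projective, applying $\Hom_R(-,R)$ yields natural isomorphisms $\Ext^i_R(\lambda M,R)\cong\Ext^{i+1}_R(\Tr M,R)$ for every $i\geq 1$. Theorem \ref{MS} says that a stable module is horizontally linked precisely when $\Ext^1_R(\Tr M,R)=0$. Putting these together, whenever $M$ is horizontally linked the inequality $\rgr(\lambda M)\geq k$ becomes equivalent to $\Ext^j_R(\Tr M,R)=0$ for all $1\leq j\leq k$, i.e.\ to $\rgr(\Tr M)>k$.

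For (i), I first suppose $M$ satisfies $\widetilde{S}_k$. Since $\gd_R(M)<\infty$, Theorem \ref{AS}(b) gives $\rgr(\Tr M)>k\geq 1$; in particular $\Ext^1_R(\Tr M,R)=0$, so stability of $M$ together with Theorem \ref{MS} forces $M$ to be horizontally linked, and the equivalence above yields $\rgr(\lambda M)\geq k$. Conversely, if $M$ is horizontally linked with $\rgr(\lambda M)\geq k$, the same equivalence produces $\rgr(\Tr M)>k$, and the implication chain (i)$\Rightarrow$(ii)$\Rightarrow$(iv) of Theorem \ref{AS}(a) delivers $\widetilde{S}_k$ for $M$.

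For (ii), horizontal linkage of $M$ gives $\Ext^1_R(\Tr M,R)=0$ by Theorem \ref{MS}, and the Ext shift then turns $\rgr(\lambda M)=\infty$ into the vanishing $\Ext^i_R(\Tr M,R)=0$ for every $i>0$. The standing hypothesis $\gd_R(M)<\infty$ implies $\gd_R(\Tr M)<\infty$, so Theorem \ref{Gor}(i) applied to $\Tr M$ recasts this vanishing as $\gd_R(\Tr M)=0$, i.e.\ $\Tr M\in G(R)$; by the equivalence $M\in G(R)\Leftrightarrow\Tr M\in G(R)$ recalled in the excerpt this is the same as $\gd_R(M)=0$. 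Taking contrapositives yields (ii).

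The only step beyond mechanical bookkeeping is the transfer of finite G-dimension from $M$ to $\Tr M$ invoked in (ii). This is a standard consequence of the theory of G-resolutions but is not made explicit in the excerpt, so the final write-up should either cite it or bypass it via the alternative route ``$M\in G(R)\iff\lambda M\in G(R)$ for stable horizontally linked $M$'' (recalled just before Theorem \ref{Gor}) combined with Theorem \ref{Gor}(i) applied directly to $\lambda M$.
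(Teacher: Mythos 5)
Your proof of part (i) is essentially the paper's own argument: both combine the shift $\Ext^i_R(\lambda M,R)\cong\Ext^{i+1}_R(\Tr M,R)$ coming from the exact sequence (\ref{1}), the characterization of horizontal linkage in Theorem \ref{MS}, and the two halves of Theorem \ref{AS}. Nothing to change there.

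Part (ii) is where you diverge from the paper, and your direction $\rgr(\lambda M)=\infty\Rightarrow\gd_R(M)=0$ has a genuine gap. To apply Theorem \ref{Gor}(i) to $\Tr M$ you must already know $\gd_R(\Tr M)<\infty$; the hypothesis of the corollary is only $\gd_R(M)<\infty$, and nothing recalled in the paper transfers finiteness of G-dimension from $M$ to its transpose (the paper's Theorem \ref{Gor} and the remark after the definition of $G(R)$ only cover the G-dimension \emph{zero} case). Your proposed bypass does not repair this: the equivalence ``$M\in G(R)$ iff $\lambda M\in G(R)$ and $M$ is horizontally linked'' lets you pass between the two memberships in $G(R)$, but the step converting the vanishing $\Ext^i_R(\lambda M,R)=0$ for all $i>0$ into $\lambda M\in G(R)$ is again Theorem \ref{Gor}(i), which again requires $\gd_R(\lambda M)<\infty$ as an input --- and since $\lambda M=\Omega\Tr M$, that is the same unproved finiteness. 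The paper avoids the issue entirely by staying with $M$ itself and localizing: if $\rgr(\lambda M)=\infty$, then by part (i) $M$ satisfies $\widetilde{S}_k$ for every $k$; for any $\fp\in\Spec R$, choosing $k>\depth R_\fp$ gives $\depth_{R_\fp}(M_\fp)\geq\depth R_\fp$, and the Auslander--Bridger formula (Theorem \ref{Gor}(ii), applied to $M_\fp$, whose G-dimension is finite because G-dimension localizes) forces $\gd_{R_\fp}(M_\fp)=0$; as $\fp$ was arbitrary, $\gd_R(M)=0$. You should either adopt this localization argument or supply an explicit reference for the fact that finite G-dimension passes to the transpose; as written, that step is the crux of your proof of (ii) and is not justified. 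Your easy direction of (ii), namely $\gd_R(M)=0\Rightarrow\rgr(\lambda M)=\infty$, is correct and matches the paper's ``the other side is obvious.''
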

\begin{proof}
(i). By Theorem \ref{AS}, $M$ satisfies $\widetilde{S}_k$ if and
only if $\Ext^i_R(\Tr M,R)=0$ for all $i$, $1\leq i\leq k$. By
Theorem \ref{MS}, it is equivalent to have $M$ horizontally linked
and $\rgr(\lambda M)\geq k$ by using (\ref{1}).

(ii). Let $\gd_R(M)\neq0$. There is $\fp\in\Spec R$, such that $\gd_{R_\fp}(M_\fp)\neq0$.
Assume, contrarily, that $\rgr(\lambda M)=\infty$ so that $M$ satisfies $\widetilde{S}_k$ for each integer $k$. Choose an integer $k$
with $k>\depth R_\fp$. We get
$\depth_{R_\fp}(M_\fp)\geq\depth R_\fp$ which yields  $\gd_{R_\fp}(M_\fp)=0$. The other side is obvious.
\end{proof}
For $k>0$, the composed functor $\mathcal{T}_k:=\Tr\Omega^{k-1}$ is
introduced by Auslander and Bridger in \cite{AB}, and recently used
by Iyama \cite{I} to define a duality between some full
subcategories (e.g. Example \ref{exa}). As we will use the composed
functor $\mathcal{T}_k$ frequently, we note some of its properties
in the following.
\begin{rem}\label{R}
\emph{(See \cite[Lemma 4]{MS} and (\ref{m}).) For a stable
$R$--module $N$, there is an exact sequence
\begin{equation}\label{e1}
0\longrightarrow \Ext^1_R(\Tr N,R)\longrightarrow
N\longrightarrow \lambda ^2N\longrightarrow 0.
\end{equation}
Note that, as the transpose of every R-module is either stable or
zero, $\Tr\mathcal{T}_kM$ is stably isomorphic to $\Omega^{k-1}M$
for every $R$--module $M$ and all $k > 0$. As
$\Ext^1_R(\Omega^{k-1}M,R)\cong \Ext^k_R(M,R)$, we obtain
\begin{equation}\label{e} 0\longrightarrow \Ext^k_R(M,R)\longrightarrow \mathcal{T}_kM\longrightarrow \lambda^2
\mathcal{T}_kM\longrightarrow 0 \ \ \text{for \ all}\ k>0,
\end{equation}
by replacing $N$ by
  $\mathcal{T}_kM$ in the above exact sequence .}
\end{rem}
\section{linkage and the reduced grade of modules}
 We start this section by expressing the associated primes of the $\Ext^{\small{\rgr(M)}}_R(M,R)$ for a horizontally linked module $M$
 of finite and positive G-dimension in terms of $\lambda M$.
\begin{lem}\label{l3}
Let $M$ be a horizontally linked $R$--module of finite and positive $G$-dimension. Set \emph{$n=\rgr(M)$}. Then
\begin{center}\small{\emph{$\Ass_R(\Ext^n_R(M,R))=\{\fp\in\Spec R\mid\gd_{R_\fp}(M_\fp)\neq0, \depth_{R_\fp}((\lambda M)_\fp)
=n=\rgr_{R_\fp}(M_\fp)\}.$}}\end{center}

\end{lem}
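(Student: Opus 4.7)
The plan is to localize the problem to the local case and then combine the exact sequence (\ref{e}) of Remark \ref{R} with a depth-lemma computation along the dualized minimal free resolution of $M$.

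First I would observe that $\Ext$ commutes with localization and that the vanishing $\Ext^i_R(M,R)=0$ for $0<i<n$ (by the definition of $n=\rgr_R(M)$) forces, for any $\fp\in\Spec R$, the equivalence $\fp\in\Supp_R(\Ext^n_R(M,R))\iff\rgr_{R_\fp}(M_\fp)=n$; this condition already implies $\gd_{R_\fp}(M_\fp)\geq n>0$. Since associated primes lie in the support, it suffices, after localizing at $\fp$, to prove the local statement: for $(R,\fm)$ local with $M$ horizontally linked and $\rgr_R(M)=n$, one has $\fm\in\Ass_R(\Ext^n_R(M,R))$ if and only if $\depth_R(\lambda M)=n$.

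For the local statement, Theorem \ref{Gor}(ii) applied to both $M$ and $\lambda M$ (each of finite $G$-dimension) gives $n\leq\gd_R(M)\leq\depth R$ and $\depth_R(\lambda M)\leq\depth R$. Corollary \ref{A}(i) applied to $\lambda M$ (which is horizontally linked, with $\rgr(\lambda^2 M)=\rgr(M)=n$) shows that $\lambda M$ satisfies $\widetilde{S}_n$, and therefore $\depth_R(\lambda M)\geq n$. Let $P_\bullet\to M$ be the minimal free resolution. The vanishing of $\Ext^i_R(M,R)$ for $0<i<n$ makes the dualized complex $P_0^*\to P_1^*\to\cdots$ exact in positions $1,\dots,n-1$, and splicing the short sequences of image modules yields
\[
0\to\lambda M\to P_1^*\to P_2^*\to\cdots\to P_{n-1}^*\to L\to 0,
\]
together with $0\to L\to P_n^*\to\mathcal{T}_n M\to 0$, under the identification $\mathcal{T}_n M=\coker(P_{n-1}^*\to P_n^*)$ with $L=\im(P_{n-1}^*\to P_n^*)$. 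Each $P_i^*$ is free of depth $\depth R$; applying the depth lemma inductively to the short pieces (using $n\leq\depth_R(\lambda M)\leq\depth R$ at every step, which guarantees each inequality becomes an equality) gives $\depth_R L=\depth_R(\lambda M)-(n-1)$ and hence $\depth_R\mathcal{T}_n M=\depth_R(\lambda M)-n$. In particular, $\fm\in\Ass_R(\mathcal{T}_n M)$ is equivalent to $\depth_R(\lambda M)=n$.

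To transfer this conclusion to $\Ext^n_R(M,R)$, I would use sequence (\ref{e}) with $k=n$,
\[
0\to\Ext^n_R(M,R)\to\mathcal{T}_n M\to\lambda^2\mathcal{T}_n M\to 0,
\]
which gives $\Ass_R(\Ext^n_R(M,R))\su\Ass_R(\mathcal{T}_n M)\su\Ass_R(\Ext^n_R(M,R))\cup\Ass_R(\lambda^2\mathcal{T}_n M)$. Applying (\ref{1}) to the module $\lambda\mathcal{T}_n M$ exhibits $\lambda^2\mathcal{T}_n M$ as a first syzygy of finite $G$-dimension; by Theorem \ref{AS} it then satisfies $\widetilde{S}_1$, so $\depth_R(\lambda^2\mathcal{T}_n M)\geq 1$ and $\fm\notin\Ass_R(\lambda^2\mathcal{T}_n M)$. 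Hence $\fm\in\Ass_R(\Ext^n_R(M,R))\iff\fm\in\Ass_R(\mathcal{T}_n M)\iff\depth_R(\lambda M)=n$, which gives the required equivalence. The main obstacle is the inductive depth-lemma step, where every inequality must become an equality at each splice; a secondary technical point is verifying that $\lambda^2\mathcal{T}_n M$ is a first syzygy, which is precisely what rules out $\fm$ being associated to the cokernel rather than to $\Ext^n_R(M,R)$.
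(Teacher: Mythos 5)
Your proposal is correct and follows essentially the same route as the paper's proof: both rest on the identification $\underline{\lambda M}\cong\underline{\Omega^n\mathcal{T}_nM}$ obtained from the vanishing of $\Ext^i_R(M,R)$ for $0<i<n$ (you unwind the dualized minimal resolution where the paper iterates the exact sequence (\ref{e}), which amounts to the same thing), followed by the depth count $\depth(\lambda M)=n\iff\depth\mathcal{T}_nM=0$ and the transfer to $\Ext^n_R(M,R)$ via $0\to\Ext^n_R(M,R)\to\mathcal{T}_nM\to\lambda^2\mathcal{T}_nM\to 0$ using that $\lambda^2\mathcal{T}_nM$ is a first syzygy of positive depth. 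Your up-front reduction of the associated-prime statement to a local if-and-only-if via the support computation is a tidy repackaging of the localization bookkeeping the paper carries along (including its check that $(\lambda M)_\fp\neq 0$), not a different method.
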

\begin{proof}
We first give some general comments. Let $\fp\in\Spec R$ with
$\gd_{R_\fp}(M_\fp)\not =0$ so that $\depth R_\fp>0$. As $M$ is a
syzygy, we get $\depth_{R_\fp}( M_\fp) \not= 0$. Moreover assume
that $\rgr_{R_\fp}(M_\fp)= n$. Consider the exact sequence (\ref{e})
for $M_\fp$ and $i>0$,
\begin{equation}\label{d1}
 0\longrightarrow\Ext^i_{R_\fp}(M_\fp,R_\fp)\longrightarrow \mathcal{T}_iM_\fp\longrightarrow\lambda^2_{R_\fp}
\mathcal{T}_iM_\fp\longrightarrow 0.
 \end{equation}
 Note that $\lambda^2_{R_\fp}\mathcal{T}_iM_\fp$ is stably isomorphic to $\Omega_{R_\fp}\mathcal{T}_{i+1}M_\fp$.

As $\rgr_{R_\fp}(M_\fp) = n$, $\Ext_{R_\fp}^i(M_\fp, R_\fp)= 0$ for
all $i$, $1\leq i< n$. Thus $\mathcal{T}_i M_\fp\cong
\lambda_{R_\fp}^2\mathcal{T}_i M_\fp$ for all $i$, $1\leq i<n$, by
(\ref{d1}). Therefore $\mathcal{T}_iM_\fp$ is stably isomorphic
 to $\Omega_{R_\fp}\mathcal{T}_{i+1}M_\fp$ for $1\leq i<n$. Now, inductively, we get $\underline{\lambda_{R_\fp}M_\fp}\cong\underline
 {\Omega_{R_\fp}\mathcal{T}_1M_\fp}\cong\cdots
 \cong\underline{\Omega^n_{R_\fp}\mathcal{T}_nM_\fp}$.\\

Let $\fp\in\Ass_R(\Ext^n_R(M,R))$ so that $\gd_{R_\fp}(M_\fp)\neq0$ and $\rgr_{R_\fp}(M_\fp)=n$. Therefore, by Theorem \ref{Gor},
$\depth R_\fp>\gd_{R_\fp}(M_\fp)\geq\rgr_{R_\fp}(M_\fp)=n$.
If $(\lambda M)_\fp=0$ then
$(\Tr M)_\fp$ is a free $R_\fp$--module by (\ref{1}) and, as $\underline{(\Tr_R M)_\fp}\cong\underline{\Tr_{R_\fp}M_\fp}$,
 $\Tr_{R_\fp}M_\fp$ is free and so it is equal to zero. Therefore $M_\fp$ is free $R_\fp$--module
which is a contradiction. Thus $\fp\in\Supp_R(\lambda M)$.

Since $\fp R_\fp\in\Ass_{R_\fp}(\mathcal{T}_nM_\fp)$ it follows $\depth_{R_\fp}(\mathcal{T}_n M_\fp)=0$
 and so $\depth_{R_\fp}(\lambda_{R_\fp}M_\fp)=n$. As $\underline{\lambda_{R_\fp}M_\fp}\cong\underline{(\lambda M)_\fp}$
and $n<\depth R_\fp$, we conclude that $\depth_{R_\fp}((\lambda M)_\fp)=n$.

Conversely, let $\fp\in\Spec R$ with $\gd_{R_\fp}(M_\fp)\neq0$ and
$\depth_{R_\fp}((\lambda M)_\fp)= n= \rgr_{R_\fp}(M_\fp)$.
Therefore, as mentioned above, we have
$\underline{\lambda_{R_\fp}M_\fp}\cong\underline{\Omega^n_{R_\fp}
\mathcal{T}_nM_\fp}$ and also
$\depth_{R_\fp}(\lambda_{R_\fp}M_\fp)=\depth_{R_\fp}((\lambda
M)_\fp)=n$. Hence, it is easy to see that
$\depth_{R_\fp}\mathcal{T}_nM_\fp=0$. Consider the exact sequence
(\ref{d1}) for $i= n$. As $\lambda^2_{R_\fp}\mathcal{T}_nM_\fp$ is a
syzygy as $R_\fp$--module and $\depth R_\fp\not = 0$ we find that
$\depth_{R_\fp} (\Ext^n_{R_\fp}(M_\fp,R_\fp))=0$  and so
$\fp\in\Ass_R(\Ext^n_R(M,R))$.
\end{proof}

We can express the reduced grade of a horizontally linked module $M$ with finite G-dimension in terms of $\lambda M$. 

\begin{prop}\label{p}
Let $M$ be a horizontally linked $R$--module of finite G-dimension.
Then \emph{$$\rgr(M)=\inf\{\depth_{R_\fp}((\lambda M)_\fp)\mid
\fp\in\Spec R,\gd_{R_\fp}(M_\fp)\neq0\}.$$}
\end{prop}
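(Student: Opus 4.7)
The plan is to split into the trivial case $\gd_R(M)=0$ and the substantive case $\gd_R(M)\geq 1$. In the trivial case $M\in G(R)$, so every localization $M_\fp$ lies in $G(R_\fp)$, whence the indexing set on the right is empty; its infimum is then $\infty$, matching $\rgr(M)=\infty$. From now on I would assume $\gd_R(M)\geq 1$ and set $n=\rgr(M)$; by Theorem \ref{Gor}(i) together with the definition of reduced grade, $n$ is a finite positive integer with $\Ext^n_R(M,R)\neq 0$.

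For the inequality $n\geq$ infimum, I would pick any $\fp\in\Ass_R(\Ext^n_R(M,R))$. Lemma \ref{l3} states precisely that such a $\fp$ satisfies $\gd_{R_\fp}(M_\fp)\neq 0$ and $\depth_{R_\fp}((\lambda M)_\fp)=n$, so $\fp$ belongs to the indexing set and realizes the value $n$.

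The reverse inequality is the real content. I would fix an arbitrary $\fp\in\Spec R$ with $\gd_{R_\fp}(M_\fp)\neq 0$ and set $n'=\rgr_{R_\fp}(M_\fp)$. Since $\Ext^i_R(M,R)=0$ for $1\leq i<n$, the same vanishing persists after localization, giving $n'\geq n$; Theorem \ref{Gor}(ii) also supplies $n'\leq\gd_{R_\fp}(M_\fp)\leq\depth R_\fp$. I would then rerun the first half of the proof of Lemma \ref{l3}, with $n$ replaced by $n'$, to obtain the stable isomorphism
$$\underline{\lambda_{R_\fp}M_\fp}\cong\underline{\Omega_{R_\fp}^{n'}\mathcal{T}_{n'}M_\fp}.$$
The module on the right is an $n'$th syzygy over a local ring of depth at least $n'$, so an iterated application of the depth lemma yields depth $\geq n'$. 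Transferring from $\lambda_{R_\fp}M_\fp$ to $(\lambda M)_\fp$ via the standard stable isomorphism $\underline{(\lambda M)_\fp}\cong\underline{\lambda_{R_\fp}M_\fp}$ alters the module only by free $R_\fp$-summands, each of depth $\depth R_\fp\geq n'$, so the bound survives: $\depth_{R_\fp}((\lambda M)_\fp)\geq n'\geq n$.

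The main obstacle I expect is this last transfer step—tracking carefully how stable isomorphism interacts with localization, and verifying that the free summands introduced in the process do not degrade the depth estimate. Everything else is assembly of Lemma \ref{l3}, Theorem \ref{Gor}, and the standard depth-lemma bound for syzygy modules.
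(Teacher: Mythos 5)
Your proposal is correct and follows essentially the same route as the paper: the infimum is attained at an associated prime of $\Ext^n_R(M,R)$ via Lemma \ref{l3}, and the lower bound comes from the fact that $\rgr(M)\geq n$ makes $\lambda M$ (locally) an $n$th syzygy. The only cosmetic difference is that the paper gets the lower bound globally by computing $\rgr(\Tr\lambda M)>n$ and invoking Theorem \ref{AS} for $\lambda M$, whereas you rebuild the syzygy tower $\underline{\lambda_{R_\fp}M_\fp}\cong\underline{\Omega_{R_\fp}^{n'}\mathcal{T}_{n'}M_\fp}$ at each prime and apply the depth lemma directly — the same mechanism, unpacked by hand.
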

\begin{proof}
We may assume that $\gd_R(M)>0$. Set $n=\rgr(M)$. Let $\fp\in\Spec R$ with $\gd_{R_\fp}(M_\fp)\neq0$ so that $\depth R_\fp>0$.
As $M$ is a syzygy, we get $\depth_{R_\fp}( M_\fp)>0$. Therefore, we have $n\leq\rgr_{R_\fp}(M_\fp)\leq\gd_{R_\fp}(M_\fp)<\depth R_\fp$.
As $\lambda M$ is a first syzygy, $\Ext^1_R(\Tr\lambda M,R)=0$ (cf. \cite[1.4.20 and 1.4.21]{BH}).
Note that $M\cong\lambda^2M$ and so $\Ext^i_R(\Tr\lambda M,R)\cong\Ext^{i-1}_R(\lambda(\lambda M),R)\cong
\Ext^{i-1}_R(M,R)=0$ for $2\leq i\leq n$, and so, by Theorem \ref{AS}, we have
 $\depth_{R_\fp}((\lambda M)_\fp)\geq\min\{n,\depth R_\fp\}$, and so $n\leq\inf\{\depth_{R_\fp}((\lambda M)_\fp)\mid  \fp\in\Spec R,
 \gd_{R_\fp}(M_\fp)\neq0\}.$

 On the other hand, by the Lemma \ref{l3}, if $\fp\in\Ass_R(\Ext^n_R(M,R))$ then $\gd_{R_\fp}(M_\fp)\neq0$ and
 $\depth_{R_\fp}((\lambda M)_\fp)=\rgr(M)$ and so the assertion holds.
 \end{proof}

For a ring $R$, set $X^i(R)=\{\fp\in\Spec R\mid\depth R_\fp\leq i\}$.
We have the following characterization of horizontally linked module of \emph{$G$}-dimension zero.
\begin{prop}\label{Gd}
Let $M$ be a horizontally linked $R$--module of finite \emph{G}-dimension. Then \emph{$\gd_R(M)=0$ if and only if
$\depth_{R_\fp}(M_\fp)+\depth_{R_\fp}((\lambda M)_\fp)>\depth R_\fp$ for all $\fp\in\Spec R\setminus X^0(R)$}.
\end{prop}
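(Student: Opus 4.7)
The plan is to prove each direction by localizing and applying the machinery already assembled, principally Theorem~\ref{Gor} for the depth formula and Lemma~\ref{l3} to pinpoint the problematic primes.

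For the forward direction, suppose $\gd_R(M)=0$, so $M\in G(R)$. Since $M$ is stable (being horizontally linked) and lies in $G(R)$, it follows that $\lambda M\in G(R)$ as well, as noted in the discussion preceding Theorem~\ref{Gor}. The $G$-class localizes, so for any $\fp\in\Spec R\setminus X^0(R)$, both $M_\fp$ and $(\lambda M)_\fp$ either vanish or lie in $G(R_\fp)$. When non-zero, Theorem~\ref{Gor}(ii) gives $\depth_{R_\fp}(M_\fp)=\depth R_\fp$ and $\depth_{R_\fp}((\lambda M)_\fp)=\depth R_\fp$; a zero module has depth $\infty$. Since $\depth R_\fp>0$, the sum exceeds $\depth R_\fp$ in every case.

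For the converse, I argue by contrapositive: assuming $\gd_R(M)>0$, I produce $\fp\in\Spec R\setminus X^0(R)$ violating the depth inequality. By Theorem~\ref{Gor}(i), some $\Ext^i_R(M,R)$ with $i>0$ is non-zero, so $n:=\rgr(M)$ is finite and $\Ext^n_R(M,R)$ has associated primes. Pick $\fp\in\Ass_R(\Ext^n_R(M,R))$ and apply Lemma~\ref{l3}: this yields $\gd_{R_\fp}(M_\fp)\neq 0$, $\rgr_{R_\fp}(M_\fp)=n$, and $\depth_{R_\fp}((\lambda M)_\fp)=n$. The non-vanishing of $\gd_{R_\fp}(M_\fp)$, via Theorem~\ref{Gor}(ii), forces $\depth R_\fp\geq \gd_{R_\fp}(M_\fp)\geq 1$, so indeed $\fp\notin X^0(R)$. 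Combining Theorem~\ref{Gor}(ii) with the inequality $n=\rgr_{R_\fp}(M_\fp)\le \gd_{R_\fp}(M_\fp)$ (valid because $M_\fp$ has finite positive $G$-dimension), we compute
\[
\depth_{R_\fp}(M_\fp)+\depth_{R_\fp}((\lambda M)_\fp)=\bigl(\depth R_\fp-\gd_{R_\fp}(M_\fp)\bigr)+n\le \depth R_\fp,
\]
contradicting the hypothesis.

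There is no deep obstacle here; the proof is a fairly direct combination of Lemma~\ref{l3} and the Auslander--Bridger depth formula. The only delicate point is confirming that the prime furnished by Lemma~\ref{l3} actually lies outside $X^0(R)$, which is precisely what the local non-vanishing of $G$-dimension guarantees.
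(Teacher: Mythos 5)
Your proof is correct and follows essentially the same route as the paper: the easy direction via localization of $G$-dimension zero and the depth formula, and the hard direction by locating a prime in $\Ass_R(\Ext^{\rgr(M)}_R(M,R))$ and invoking Lemma~\ref{l3} together with Theorem~\ref{Gor}(ii). The only cosmetic differences are that the paper phrases both directions as contradictions and reaches Lemma~\ref{l3} through Proposition~\ref{p}, while you use the lemma directly and argue the forward implication via $\lambda M\in G(R)$ rather than via $\lambda M$ being a syzygy.
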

\begin{proof}
 Set $n=\gd_R(M)$. Let $\depth_{R_\fp}(M_\fp)+\depth_{R_\fp}((\lambda M)_\fp)>\depth R_\fp$ for all $\fp\in\Spec R\setminus X^0(R)$.
 Assume contrary,
 $n>0$. By Proposition \ref{p}, there exists $\fp\in\Spec R$ such that $\gd_{R_\fp}(M_\fp)\neq0$ and $\rgr(M)=\depth_{R_\fp}
 ((\lambda M)_\fp)$.
 Therefore we have $\depth_{R_\fp}(M_\fp)+\rgr(M)>\depth R_\fp$ and so, by Theorem \ref{Gor}, $\rgr(M)>\gd_{R_\fp}(M_\fp)\geq\rgr(M_\fp)$
 which is a
 contradiction. Therefore $\gd_R(M)=0$.

 Let $\gd_R(M)=0$. Assume contrarily that $\depth_{R_\fp}(M_\fp)+\depth_{R_\fp}((\lambda M)_\fp)\leq\depth R_\fp$ for some
 $\fp\in\Spec R\setminus X^0(R)$. It follows that $\fp\in\Supp_R(M)$. Now \cite[Lemma 1.3.1]{C} implies that $\gd_{R_\fp}(M_\fp)=0$.
 Thus we obtain $\depth_{R_\fp}((\lambda M)_\fp)=0$. As $\lambda M$ is a syzygy, $\depth R_\fp=0$ which is a contradiction.
\end{proof}
Recall that a module $M$ is said to be \emph{horizontally self-linked} if $M\cong\lambda M$ (see \cite[Definition 7]{MS}).
\begin{cor}
Let $M$ be a horizontally self-linked $R$--module of finite \emph{G}-dimension. Then \emph{$\gd_R(M)=0$} if and only if
\emph{$\depth_{R_\fp}(M_\fp) >\frac{1}{2}(\depth R_\fp)$} for all \emph{$\fp\in\Spec R\setminus X^0(R)$}.
\end{cor}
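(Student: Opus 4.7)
The plan is to reduce this corollary to Proposition \ref{Gd} by exploiting the symmetry that $M \cong \lambda M$ imposes on the local depth data. The main work is checking that the hypotheses line up and that halving the depth inequality is legitimate.

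First I would verify that a horizontally self-linked module is automatically horizontally linked, so that Proposition \ref{Gd} actually applies. This is immediate: from $M \cong \lambda M$ we get $\lambda^2 M \cong \lambda(\lambda M) \cong \lambda M \cong M$, which by definition means $M$ is horizontally linked.

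Next, since $M \cong \lambda M$ as $R$-modules, localizing at any prime $\fp \in \Spec R$ yields $M_\fp \cong (\lambda M)_\fp$, and in particular
\[
\depth_{R_\fp}(M_\fp) = \depth_{R_\fp}((\lambda M)_\fp).
\]
Substituting this identity into the criterion of Proposition \ref{Gd}, the inequality
\[
\depth_{R_\fp}(M_\fp) + \depth_{R_\fp}((\lambda M)_\fp) > \depth R_\fp
\]
becomes $2\,\depth_{R_\fp}(M_\fp) > \depth R_\fp$, which is equivalent to $\depth_{R_\fp}(M_\fp) > \tfrac{1}{2}\depth R_\fp$. Thus both directions of the corollary follow at once from the corresponding equivalence in Proposition \ref{Gd}, ranging $\fp$ over $\Spec R \setminus X^0(R)$.

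There is essentially no obstacle here; the only subtlety is making sure the isomorphism $M \cong \lambda M$ is used both to verify the hypothesis of Proposition \ref{Gd} (horizontal linkage) and to identify the two depth quantities after localization. The proof will therefore be a one-paragraph deduction citing Proposition \ref{Gd}.
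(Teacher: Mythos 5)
Your proposal is correct and follows exactly the paper's route: the paper's entire proof is the one-line observation that the claim follows from Proposition \ref{Gd} once one uses $M\cong\lambda M$. Your spelled-out version (checking that self-linked implies horizontally linked and that the two depth terms coincide after localization) is just a more explicit rendering of the same deduction.
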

\begin{proof}
As $M\cong\lambda M$, the claim is obvious by Proposition \ref{Gd}.
\end{proof}
For a subset $X$ of $\Spec R$, we say that $M$ is of G-dimension zero on $X$, if $\gd_{R_\fp}(M_\fp)=0$ for all $\fp$
in $X$.
\begin{prop}
Let $M$ be a horizontally linked $R$--module of finite and positive G-dimension. Set
\emph{$t_M=\rgr(M)+\rgr(\lambda M)$}, then $M$ is of G-dimension zero on $X^{t_M-1}(R).$
\end{prop}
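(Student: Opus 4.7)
The plan is to argue by contradiction. Let $n_1=\rgr(M)$ and $n_2=\rgr(\lambda M)$; both are finite (in particular $n_2<\infty$ by Corollary~\ref{A}(ii), since $\gd_R(M)>0$), and $t_M=n_1+n_2$. Fix a prime $\fp\in X^{t_M-1}(R)$, suppose $\gd_{R_\fp}(M_\fp)\neq 0$, and aim to force $\depth R_\fp\geq t_M$, contradicting $\depth R_\fp\leq t_M-1$.

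The first step is to show that the analogous non-vanishing holds for $\lambda M$, i.e.\ $\gd_{R_\fp}((\lambda M)_\fp)\neq 0$. For this I would use that the operator $\lambda$ is compatible with localization up to stable isomorphism (as exploited in the proof of Lemma~\ref{l3}), so the global relation $M\cong\lambda^2M$ descends to a stable isomorphism between $M_\fp$ and $\lambda^2_{R_\fp}M_\fp$ over $R_\fp$. The introductory equivalence ``a stable module $N$ lies in the G-class iff $\lambda N$ does and $N$ is horizontally linked'' then applies at $R_\fp$ and yields the desired $\gd_{R_\fp}(M_\fp)=0 \iff \gd_{R_\fp}((\lambda M)_\fp)=0$.

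With $\gd_{R_\fp}((\lambda M)_\fp)$ positive (and finite, by $(\ref{1})$ localized at $\fp$), I would next pin $\depth_{R_\fp}((\lambda M)_\fp)$ between two matching bounds. For the upper bound, the vanishing $\Ext^i_R(\lambda M,R)=0$ for $i<n_2$ localizes to $\rgr_{R_\fp}((\lambda M)_\fp)\geq n_2$, and the general inequality $\rgr\leq\gd$ in positive finite G-dimension upgrades this to $\gd_{R_\fp}((\lambda M)_\fp)\geq n_2$; the Auslander--Bridger formula (Theorem~\ref{Gor}(ii)) then gives
\[
\depth_{R_\fp}((\lambda M)_\fp)=\depth R_\fp-\gd_{R_\fp}((\lambda M)_\fp)\leq\depth R_\fp-n_2.
\]
For the lower bound, Proposition~\ref{p} applied to $M$ yields $\depth_{R_\fp}((\lambda M)_\fp)\geq n_1$, because $\gd_{R_\fp}(M_\fp)\neq 0$ makes $\fp$ admissible in the infimum defining $\rgr(M)=n_1$. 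Comparing the two bounds forces $n_1\leq\depth R_\fp-n_2$, i.e.\ $\depth R_\fp\geq t_M$, the required contradiction.

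I expect the main obstacle to be the first step, i.e.\ transporting the equivalence ``$N\in G(R)$ iff $\lambda N\in G(R)$'' from $R$ to $R_\fp$. This hinges on the stable-isomorphism compatibility of $\lambda$ with localization and a local appeal to Theorem~\ref{MS}. Once that bookkeeping is in place, the remainder is a tight combination of Proposition~\ref{p}, the non-decreasing behaviour of $\rgr$ under localization, and the Auslander--Bridger formula.
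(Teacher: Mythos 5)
Your argument is correct, but it takes a genuinely different route from the paper's. The paper proves the statement by induction on $n=\rgr(M)$: the base case $n=1$ follows from Corollary \ref{A} plus the Auslander--Bridger formula, and the inductive step passes to $\Omega M$, the main work being the verification that $t_{\Omega M}=t_M$ (i.e.\ $\rgr(\lambda(\Omega M))=\rgr(\lambda M)+1$) before descending back to $M$ via $0\to\Omega M\to P\to M\to 0$ and $\Ext^1_R(M,R)=0$. You instead argue directly: at a prime $\fp\in X^{t_M-1}(R)$ with $\gd_{R_\fp}(M_\fp)\neq0$ you sandwich $\depth_{R_\fp}((\lambda M)_\fp)$ between $\rgr(M)$ (from Proposition \ref{p}, which makes $\fp$ admissible in the defining infimum) and $\depth R_\fp-\rgr(\lambda M)$ (from the Auslander--Bridger formula applied to $(\lambda M)_\fp$, once its G-dimension is known to be positive and at least $\rgr(\lambda M)$), forcing $\depth R_\fp\geq t_M$. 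This is shorter and avoids the paper's syzygy bookkeeping, at the price of leaning on Proposition \ref{p} and of the transfer step ``$\gd_{R_\fp}(M_\fp)\neq0\Rightarrow\gd_{R_\fp}((\lambda M)_\fp)\neq0$''. That step does go through, and you can sidestep the stability worries you raise by arguing with Ext-vanishing rather than with the G-class equivalence: if $\gd_{R_\fp}((\lambda M)_\fp)=0$, then (\ref{1}) localized gives $\Ext^i_{R_\fp}((\Tr M)_\fp,R_\fp)=0$ for $i\geq2$, Theorem \ref{MS} gives the vanishing for $i=1$, and Theorem \ref{Gor}(i) (applicable since $\gd_{R_\fp}((\Tr M)_\fp)\leq1$ by the same sequence) yields $\gd_{R_\fp}((\Tr M)_\fp)=0$, hence $\gd_{R_\fp}(M_\fp)=0$. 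Note also that $(\lambda M)_\fp\neq0$ whenever $\gd_{R_\fp}(M_\fp)\neq0$ (otherwise $M_\fp$ would be free), so the depth formula for $(\lambda M)_\fp$ is legitimately applied.
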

\begin{proof}
We prove the claim by induction on $n=\rgr(M)$.
Note that, by Corollary \ref{A}, we have $\depth_{R_\fp}(M_\fp)\geq \min\{\depth R_\fp,\rgr(\lambda M)\}$ for all $\fp\in\Spec R$.
If $n=1$ and $\fp\in X^{t_M-1}(R)$, then it follows that $\gd_{R_\fp}(M_\fp)=\depth R_\fp-\depth_{R_\fp}(M_\fp)=0$.

Assume that $n>1$. It is obvious that $\rgr(M)=\rgr(\Omega M)+1$. Applying induction hypothesis on $\Omega M$, implies that $\Omega M$
is of
G-dimension zero on $X^{t_{\Omega M}-1}(R)$.
 Now consider the exact sequence (\ref{e}).
Note that $\lambda^2\mathcal{T}_iM$ is stably isomorphic to
$\Omega\mathcal{T}_{i+1}M$. Since $\Ext^1_R(M,R)=0$, it follows that
$\underline{\lambda
M}\cong\underline{\Omega^2\mathcal{T}_2(M)}\cong\underline{\Omega(\lambda(\Omega
M))}$ and so $$\Ext^i_R(\lambda M,R) \cong
\Ext^{i+1}_R(\lambda(\Omega M),R)\  \text{for all}\  i>0.$$ Also we
have $\Ext^1_R(\lambda(\Omega M),R)\cong\Ext^2_R(\Tr\Omega M,R)$. As
$M$ is horizontally linked, it is a first syzygy and so $\Omega M$
is a second syzygy. Therefore, by Theorem \ref{AS},
$\Ext^2_R(\Tr\Omega M,R)=0$ and hence $\rgr(\lambda
M)=\rgr(\lambda(\Omega M))-1$ and so $t_{\Omega M}=t_M$. Now
consider the exact sequence $0\rightarrow \Omega M\rightarrow
P\rightarrow M\rightarrow 0$, where $P$ is a projective module.
Since $\Ext^1_R(M,R)=0$, it follows that $M$ is of G-dimension zero
on $X^{t_M-1}(R)$.
\end{proof}
In Theorem \ref{AS}, Auslander and Bridger give the connection of the reduced grade of $\Tr M$ with the property $M$ being a syzygy.
The following result shows the connection of the reduced grade of $M$ with the property that $\lambda M$ being a syzygy.
\begin{prop}\label{c}
Let $M$ be a horizontally linked $R$--module of finite $G$-dimension. Let $k$ be a positive integer. Then the following statements are
equivalent.
\begin{itemize}
           \item[(i)] \emph{$\rgr(M)\geq k$}.
           \item[(ii)] $\lambda M$ is a $k$th syzygy.
           \item[(iii)]Every $R_\fp$-regular sequence of length at most $k$ is also $(\lambda M)_\fp$-regular sequence for each
           \emph{ $\fp\in\Spec R$}.
            \item[(iv)] $\lambda M$ satisfies $\widetilde{S}_k$.
            \end{itemize}
\end{prop}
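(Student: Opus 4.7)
The plan is to deduce this proposition by applying Theorem \ref{AS} to the module $\lambda M$ in place of $M$, after translating the invariant $\rgr(M)$ into a reduced grade of $\Tr\lambda M$. I would proceed in three steps.

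First, I would check that $\lambda M$ itself has finite $G$-dimension. By standard properties of $G$-dimension (cf.\ \cite{AB}), finiteness of $G$-dimension is preserved under transpose, and then the exact sequence (\ref{1}), $0\to\lambda M\to P_1^*\to\Tr M\to 0$, together with $P_1^*$ projective yields $\gd_R(\lambda M)<\infty$. This is what allows part (b) of Theorem \ref{AS} to be invoked for $\lambda M$ later on.

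Second, I would translate condition (i) into $\rgr(\Tr\lambda M)>k$. Because $M$ is horizontally linked, $M\cong\lambda^2 M=\Omega\Tr(\lambda M)$, so there is a short exact sequence $0\to M\to P\to\Tr\lambda M\to 0$ with $P$ projective. Dualizing and taking the long exact sequence produces
\[
\Ext^i_R(\Tr\lambda M,R)\cong\Ext^{i-1}_R(M,R)\text{ for all }i\geq 2.
\]
Moreover, $\lambda M$ is a first syzygy by (\ref{1}), so the evaluation map $e_{\lambda M}$ is injective, and by (\ref{m}) --- or equivalently by Theorem \ref{MS} applied to the horizontally linked module $\lambda M$ --- one has $\Ext^1_R(\Tr\lambda M,R)=0$. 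Combining, $\rgr(\Tr\lambda M)>k$ iff $\Ext^j_R(\Tr\lambda M,R)=0$ for $1\leq j\leq k$ iff $\Ext^i_R(M,R)=0$ for $1\leq i\leq k-1$ iff $\rgr(M)\geq k$.

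Third, I would apply Theorem \ref{AS} to $\lambda M$. Since $\gd_R(\lambda M)<\infty$, parts (a) and (b) of that theorem give the equivalence of the four conditions $\rgr(\Tr\lambda M)>k$, $\lambda M$ is a $k$th syzygy, the regular sequence condition on $\lambda M$, and $\lambda M$ satisfies $\widetilde{S}_k$. By the previous step, the first coincides with our (i), and the remaining three are precisely (ii), (iii), and (iv), completing the proof. The main obstacle is the index bookkeeping in the second step: the asymmetry between $\rgr(X)\geq k$ (vanishing of $\Ext^i$ for $1\leq i\leq k-1$) and $\rgr(\Tr\lambda M)>k$ (vanishing for $1\leq i\leq k$) works out only because the extra vanishing $\Ext^1_R(\Tr\lambda M,R)=0$ absorbs the off-by-one that would otherwise appear.
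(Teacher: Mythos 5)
Your forward chain (i)$\Rightarrow$(ii)$\Rightarrow$(iii)$\Rightarrow$(iv) is exactly the paper's argument: $\Ext^1_R(\Tr\lambda M,R)=0$ because $\lambda M$ is a first syzygy, the shift $\Ext^i_R(\Tr\lambda M,R)\cong\Ext^{i-1}_R(M,R)$ coming from $M\cong\lambda^2M=\Omega\Tr\lambda M$ gives $\rgr(\Tr\lambda M)>\rgr(M)$, and part (a) of Theorem \ref{AS} applied to $\lambda M$ needs no finiteness of $G$-dimension. The gap is in your first step, which is the only thing supporting (iv)$\Rightarrow$(i): the assertion that finiteness of $G$-dimension is preserved under transpose is not a standard property and is false in general. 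What \cite[Lemma 4.9]{AB} gives is that $G$-dimension \emph{zero} is preserved by $\Tr$. Once $\gd_R(M)>0$, the exact sequence (\ref{n1}) shows that $\gd_R(\Tr M)<\infty$ is equivalent to $\gd_R(M^*)<\infty$, and there is no reason for this to follow from $\gd_R(M)<\infty$; for instance, over $R=k[[t^3,t^4,t^5]]$ the module $M=\coker\bigl((t^3,t^4)^{T}\colon R\to R^2\bigr)$ has projective dimension one, while $\Tr M=R/(t^3,t^4)$ has infinite $G$-dimension (this ring is Golod and not a hypersurface, so its only totally reflexive modules are free, and the ideal $(t^3,t^4)$ is not principal). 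The paper itself treats $\gd_R(\lambda M)<\infty$ as a genuinely extra condition: in the corollary following Proposition \ref{P1}, the finiteness of $\gd_R(\lambda M)$ is shown to be \emph{equivalent} to that of $\gd_R(\Ext^n_R(M,R))$, not automatic. Hence Theorem \ref{AS}(b) cannot be invoked for $\lambda M$ as you propose, and your proof of (iv)$\Rightarrow$(i) does not go through.

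The paper closes this implication by a different route that uses only $\gd_R(M)<\infty$: assuming $\gd_R(M)>0$ (otherwise $\rgr(M)=\infty$ and there is nothing to prove), set $n=\rgr(M)$ and choose $\fp\in\Ass_R(\Ext^n_R(M,R))$. Lemma \ref{l3} then gives $\depth_{R_\fp}((\lambda M)_\fp)=n=\rgr_{R_\fp}(M_\fp)\leq\gd_{R_\fp}(M_\fp)<\depth R_\fp$, and since $\lambda M$ satisfies $\widetilde{S}_k$ one gets $n\geq\min\{k,\depth R_\fp\}$; as $n<\depth R_\fp$, this forces $n\geq k$. To repair your argument you would either have to establish $\gd_R(\lambda M)<\infty$ under the stated hypotheses (which is nowhere proved in the paper and appears doubtful), or replace the appeal to Theorem \ref{AS}(b) by a local argument of this kind.
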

\begin{proof}
Since $\lambda M$ is a first syzygy, it is torsionless and so $\Ext^1_R(\Tr\lambda M,R)=0$ by (\ref{m}).
Since $M$ is horizontally linked, $M\cong\lambda^2 M$, and so $\Ext^i_R(\Tr\lambda M,R)=0$ for $1\leq i\leq\rgr(M)$, that is
 $\rgr(\Tr\lambda M)>\rgr(M)$.
Replacing $M$ by $\lambda M$ in Theorem \ref{AS}, implies that
(i)$\Rightarrow$(ii)$\Rightarrow$(iii)$\Rightarrow$(iv).

(iv)$\Rightarrow$(i) If $\gd_R(M)=0$ then we have nothing to prove.
Assume that $\gd_R(M)>0$. Set $n=\rgr(M)$ and suppose that
$\fp\in\Ass_R(\Ext^n_R(M,R))$.
 By Lemma \ref{l3}, we have $n=\depth_{R_\fp}((\lambda M)_\fp)$. Also we have $n=\rgr_{R_\fp}(M_\fp)\leq\gd_{R_\fp}(M_\fp)<\depth R_\fp$.
  As $\lambda M$ satisfies $\widetilde{S}_k$, it follows $n=\depth_{R_\fp}((\lambda M)_\fp)\geq\min\{k,\depth R_\fp\}$ and so
   $\depth R_\fp>k$ and so $\rgr(M)\geq k$.
\end{proof}
Let $(R,\fm)$ be a local ring. Recall from \cite{EGS} that $\syz(M)$
denotes the largest $n$ for which $M$ can be an $n$th syzygy in a
minimal free resolution of an $R$--module $N$. Note that
$\syz(M)=\infty$, whenever $\gd_R(M)=0$. If $M$ is a horizontally
linked of finite and positive $G$-dimension then Theorem \ref{AS}
implies that $\syz(M)=\rgr(\lambda M)$. In this case, it is obvious
that $\depth_R(M)\geq\rgr(\lambda M)$. The following theorem shows
that under certain condition the equality holds. In general, it is
well-known that
$\depth_R(M)\leq\depth_{R_{\fp}}(M_{\fp})+\h(\frac{\fm}{\fp})$, for
$\fp\in\Spec R$. In the following result, it is shown that if
$\depth$ of $M$ is small enough then it is equal to the reduced
garde of $\lambda M$. The authors are grateful to J. R. Strooker and
A.-M. Simon for quoting a flaw in the pervious version of this
Theorem.

For an $R$--module $M$, set
$$X(M)=\{\fp\in\Spec(R)\mid\gd_{R_{\fp}}(M_{\fp})\neq0\}.$$
\begin{thm}\label{t}
Let $(R,\fm)$ be a local ring and let $M$ be an $R$--module of finite and positive $G$-dimension. If $M$ is horizontally linked then the following conditions are equivalent.
\begin{itemize}
          \item[(i)]\emph{ $\depth_R(M)=\syz(M)=\rgr(\lambda M)$;}
          \item[(ii)]\emph{$\fm\in\Ass_R(\Ext^{\tiny{\rgr(\lambda M)}}_R(\lambda M,R))$;}
          \item[(iii)]\emph{$\depth_R(M)\leq\depth_{R_{\fp}}(M_{\fp})$, for each $\fp\in X(M)$}.
\end{itemize}
\end{thm}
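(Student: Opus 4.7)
The plan is to derive both equivalences by applying Lemma \ref{l3} and Proposition \ref{p} with $M$ replaced by $\lambda M$, exploiting the symmetry of horizontal linkage. Since $M\cong\lambda^2 M$, the module $\lambda M$ is itself horizontally linked (to $M$). As $\gd_R(M)>0$, Corollary \ref{A}(ii) yields $\rgr(\lambda M)<\infty$, which in turn forces $\gd_R(\lambda M)>0$, while the exact sequence (\ref{1}) shows that $\gd_R(\lambda M)$ is finite whenever $\gd_R(M)$ is. Thus both Lemma \ref{l3} and Proposition \ref{p} are available for $\lambda M$. Write $s=\rgr(\lambda M)=\syz(M)$, so that (i) amounts to the single equation $\depth_R(M)=s$.

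A bookkeeping ingredient is the identity $X(M)=X(\lambda M)$. For each prime $\fp$, the argument given in the proof of Lemma \ref{l3} shows $\underline{(\lambda M)_\fp}\cong\underline{\lambda_{R_\fp}M_\fp}$ and, by iteration, $\underline{M_\fp}\cong\underline{\lambda_{R_\fp}^2M_\fp}$; so $M_\fp$ is horizontally linked over $R_\fp$ up to stable isomorphism, and therefore $M_\fp$ and $(\lambda M)_\fp$ have $G$-dimension zero simultaneously.

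For $(i)\Leftrightarrow(ii)$, I would apply Lemma \ref{l3} to $\lambda M$. Using $\lambda(\lambda M)\cong M$, the lemma reads: $\fm\in\Ass_R(\Ext^s_R(\lambda M,R))$ iff $\gd_R(\lambda M)\neq 0$, $\depth_R(M)=s$, and $\rgr(\lambda M)=s$. The first and third conditions are already established, so (ii) reduces precisely to $\depth_R(M)=s$, which is (i).

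For $(i)\Leftrightarrow(iii)$, apply Proposition \ref{p} to $\lambda M$ to obtain
$$s=\inf\{\depth_{R_\fp}(M_\fp)\mid\fp\in X(M)\}.$$
Because $\fm\in X(M)$, one always has $\depth_R(M)\geq s$. Hence (iii) promotes this inequality to the equality $\depth_R(M)=s$, giving (i); conversely, if $\depth_R(M)=s$ then the infimum formula immediately yields $\depth_{R_\fp}(M_\fp)\geq s=\depth_R(M)$ for every $\fp\in X(M)$, which is (iii). The principal technical obstacle is the opening symmetry step --- confirming that the hypotheses of Lemma \ref{l3} and Proposition \ref{p} transfer from $M$ to $\lambda M$ and that $X(M)=X(\lambda M)$ --- after which the three equivalences fall out directly.
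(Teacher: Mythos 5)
Your argument is correct, and it is organized differently from the paper's. The paper proves Theorem \ref{t} ``from scratch'': it sets $t=\rgr(\lambda M)$, works directly with the exact sequence (\ref{e}) for $\lambda M$ and the stable isomorphism $\underline{\lambda^2M}\cong\underline{\Omega^t\mathcal{T}_t(\lambda M)}$, and for (iii)$\Rightarrow$(i) localizes at a prime in $\Ass_R(\Ext^t_R(\lambda M,R))$ and reruns that computation over $R_\fp$ --- in effect re-deriving the content of Lemma \ref{l3} in situ. You instead exploit the symmetry of horizontal linkage and apply Lemma \ref{l3} and Proposition \ref{p} as black boxes to $\lambda M$, which collapses (i)$\Leftrightarrow$(ii) to reading off the lemma at $\fp=\fm$ and (i)$\Leftrightarrow$(iii) to comparing $\depth_R(M)$ with the infimum in Proposition \ref{p}. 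The price is the bookkeeping you correctly identify and discharge: that $\lambda M$ is again horizontally linked of finite and positive $G$-dimension (positivity from $\rgr(\lambda M)<\infty$ via Corollary \ref{A}(ii)), and that $X(M)=X(\lambda M)$, which follows from $\underline{(\lambda M)_\fp}\cong\underline{\lambda_{R_\fp}M_\fp}$ and the fact that $\gd=0$ passes between a stable module and its transpose and syzygy. Since Lemma \ref{l3} and Proposition \ref{p} are themselves proved with exactly the tools the paper deploys inline here, the two proofs have the same mathematical core; yours is the more modular packaging, while the paper's self-contained version avoids having to verify that the hypotheses of those earlier results transfer to $\lambda M$.
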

\begin{proof}
Set $t=\rgr(\lambda M)$. As $\gd_R(M)>0$, by Corollary \ref{A}, $t<\depth R$.
In the following we use the exact sequence (\ref{e}):
\begin{equation}\label{!!}
0\longrightarrow\Ext^t_R(\lambda M,R)\longrightarrow\mathcal{T}_t\lambda M\longrightarrow\lambda^2\mathcal{T}_t\lambda M\longrightarrow0
\end{equation}
As $\Ext^i_R(\lambda M,R)=0$ for all $i$, $1\leq i<t$, $\underline{\mathcal{T}_i(\lambda M)}\cong\underline{\Omega\mathcal{T}_{i+1}(\lambda M)}$ for all $i$, $1\leq i<t$, and so
\begin{equation}\label{!}
\underline{\lambda^2 M}\cong\underline{\Omega^t\mathcal{T}_t(\lambda M)}.
\end{equation}

(i)$\Rightarrow$(ii) As $M$ is horizontally linked and $t=\depth_R(M)<\depth R$, it is easy to see that $\depth_R(\mathcal{T}_t(\lambda M))=0$ by the (\ref{!}). From the exact sequence (\ref{!!}), as $\depth_R(\lambda^2\mathcal{T}_t(\lambda M))>0$, we find that $\depth_R(\Ext^t_R(\lambda M,R))=0$ and so $\fm\in\Ass_R(\Ext^t_R(\lambda M,R))$.

(ii)$\Rightarrow$(i) By the exact sequence (\ref{!!}), it is obvious that $\depth_R(\mathcal{T}_t(\lambda M))=0$. As $M$ is horizontally linked and $t<\depth R$, by (\ref{!}), $\depth_R(M)=t$.

(i)$\Rightarrow$(iii) By Corollary \ref{A}, $M$ satisfies $\widetilde{S}_t$. Hence if $\fp\in X(M)$ then
$\depth_{R_{\fp}}(M_{\fp})\geq t=\depth_R(M)$.

(iii)$\Rightarrow$(i) As $\gd_R(M)>0$, $\depth_R(M)\geq t$, by Corollary \ref{A}. Let $\fp\in\Ass_R(\Ext^{t}_R(\lambda M,R))$. Therefore, $\fp R_{\fp}\in\Ass_{R_{\fp}}(\Ext^t_{R_{\fp}}(\lambda_{R_{\fp}}M_{\fp},R_{\fp}))$ and $\rgr_{R_{\fp}}(\lambda_{R_{\fp}} M_{\fp})=t$.
By the exact sequence (\ref{!!}), we have $\depth_{R_{\fp}}(\mathcal{T}_t(\lambda_{R_{\fp}}M_{\fp}))=0$. If $\gd_{R_{\fp}}(M_{\fp})=0$ then $\gd_{R_{\fp}}(\lambda_{R_{\fp}}M_{\fp})=0$,
which is a contradiction, because $\rgr_{R_{\fp}}(\lambda_{R_{\fp}}M_{\fp})=t<\infty$. Hence $\gd_{R_{\fp}}(M_{\fp})\neq0$. By Corollary \ref{A}, $M$ satisfies $\widetilde{S}_t$ and so it is easy to see that $t<\depth R_{\fp}$. Now by the exact sequence (\ref{!!}), we have $\underline{\mathcal{T}_i(\lambda_{R_{\fp}} M_{\fp})}\cong\underline{\Omega_{R_{\fp}}\mathcal{T}_{i+1}(\lambda_{R_{\fp}} M_{\fp})}$ for all $i$, $1\leq i<t$, and so
$\underline{\lambda^2_{R_{\fp}} M_{\fp}}\cong\underline{\Omega^t_{R_{\fp}}\mathcal{T}_t(\lambda_{R_{\fp}} M_{\fp})}$. Therefore, $\depth_{R_{\fp}}(\lambda^2_{R_{\fp}} M_{\fp})=t$. As $\underline{M_{\fp}}\cong\underline{\lambda^2_{R_{\fp}} M_{\fp}}$, $\depth_{R_{\fp}}(M_{\fp})=t$ and so $\depth_R(M)\leq t$. Therefore, $\depth_R(M)=\rgr(\lambda M)$.
\end{proof}

The following lemma will be useful for the rest of the paper.
\begin{lem}\label{p2}
Let $M$ and $N$ be $R$--modules and let $n$ be a positive integer
with \emph{$\rgr(M)\geq n$}. Then
\begin{itemize}
      \item[(i)]{\emph{$\Tor^R_i(\mathcal{T}_nM,N)\cong \left\lbrace
            \begin{array}{c l}
             \Ext^{n-i}_R(M,N),\ \ & \text{ \ \ $1\leq i<n,$}\\
             \Tor^R_{i-n}(\lambda M,N),\ \ & \text{ \ \ $i>n$;}\\
 \end{array}
\right.$}}\\
\\
       \item[(ii)]{\emph{$\Ext^i_R(\mathcal{T}_nM,N)\cong \left\lbrace
            \begin{array}{c l}
             \Tor^R_{n-i}(M,N),\ \ & \text{ \ \ $1\leq i<n,$}\\
             \Ext^{i-n}_R(\lambda M,N),\ \ & \text{ \ \ $i>n$}.\\
 \end{array}
\right.$}}\\
\end{itemize}
\end{lem}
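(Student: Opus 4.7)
My plan is to build an explicit projective resolution of $\mathcal{T}_n M$ using the hypothesis $\rgr(M)\geq n$, and then read off all four isomorphisms from two distinct pieces of that resolution.

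First, starting from a projective resolution $P_\bullet\to M$, applying $(-)^*$ gives the cochain complex $0\to M^*\to P_0^*\to P_1^*\to\cdots$ whose cohomology at $P_i^*$ is $\Ext^i_R(M,R)$. Since $\rgr(M)\geq n$ this cohomology vanishes for $1\leq i\leq n-1$, so together with the definition $\mathcal{T}_n M=\coker(P_{n-1}^*\to P_n^*)$ one obtains an exact sequence
$$(*)\quad 0\to M^*\to P_0^*\to P_1^*\to\cdots\to P_n^*\to \mathcal{T}_n M\to 0.$$
Pick any projective resolution $\cdots\to G_1\to G_0\to M^*\to 0$ of $M^*$. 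Splicing it with $(*)$ produces a projective resolution
$$\mathbf{R}:\;\cdots\to G_1\to G_0\to P_0^*\to P_1^*\to\cdots\to P_n^*\to\mathcal{T}_n M\to 0$$
of $\mathcal{T}_n M$; splicing the same $G_\bullet$ instead with the short exact sequence (\ref{2}) produces a projective resolution $\mathbf{S}:\;\cdots\to G_1\to G_0\to P_0^*\to\lambda M\to 0$ of $\lambda M$.

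The point is that $\mathbf{R}\otimes_R N$ (resp.\ $\Hom_R(\mathbf{R},N)$) splits naturally into a ``tail'' piece living in homological degrees $\geq n+1$ and a ``middle'' piece in degrees $0\leq i\leq n$. For $i>n$ the tail agrees, after shifting by $n$, with the corresponding tail of $\mathbf{S}\otimes_R N$ (resp.\ $\Hom_R(\mathbf{S},N)$); its homology (resp.\ cohomology) therefore yields $\Tor^R_{i-n}(\lambda M,N)$ (resp.\ $\Ext^{i-n}_R(\lambda M,N)$) at once. For $1\leq i\leq n-1$ the middle piece involves only the $P_j^*$'s, and I use the natural isomorphisms $P_j^*\otimes_R N\cong \Hom_R(P_j,N)$ and $\Hom_R(P_j^*,N)\cong P_j\otimes_R N$ -- valid because each $P_j$ is finitely generated projective -- to identify the middle piece of $\mathbf{R}\otimes_R N$ with a segment of the cochain complex $\Hom_R(P_\bullet,N)$ (which computes $\Ext^\bullet_R(M,N)$) and the middle piece of $\Hom_R(\mathbf{R},N)$ with a segment of the chain complex $P_\bullet\otimes_R N$ (which computes $\Tor^R_\bullet(M,N)$). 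The $n+1$ middle terms of $\mathbf{R}$ cause an index shift of $n$, so in the range $1\leq i\leq n-1$ the homology $H_i(\mathbf{R}\otimes_R N)$ is identified with $\Ext^{n-i}_R(M,N)$ and the cohomology $H^i(\Hom_R(\mathbf{R},N))$ with $\Tor^R_{n-i}(M,N)$, giving the remaining two cases.

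The main technical obstacle is the bookkeeping of indices and checking that the differentials in the middle piece of $\mathbf{R}$ really match, under the above tensor-Hom identifications, those of the complexes $\Hom_R(P_\bullet,N)$ and $P_\bullet\otimes_R N$; this reduces to the naturality of the evaluation isomorphism $P^*\otimes_R N\cong\Hom_R(P,N)$ in the variable $P$, which is standard. Once this is in place, all four claimed isomorphisms follow by a direct comparison of the corresponding complexes.
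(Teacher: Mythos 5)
Your proof is correct and follows essentially the same route as the paper: both rest on dualizing the projective resolution to get the right-acyclic complex $P_0^*\to\cdots\to P_n^*\to\mathcal{T}_nM\to 0$, applying the tensor- and hom-evaluation isomorphisms $P^*\otimes_RN\cong\Hom_R(P,N)$ and $\Hom_R(P^*,N)\cong P\otimes_RN$ in the range $1\leq i<n$, and identifying $\lambda M$ with the $n$th syzygy of $\mathcal{T}_nM$ for the range $i>n$. The only difference is presentational -- you splice in a resolution of $M^*$ to get an explicit full projective resolution, while the paper invokes $\underline{\lambda M}\cong\underline{\Omega^n\mathcal{T}_nM}$ from Remark \ref{R} and shifts dimensions.
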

\begin{proof}
(i) Let $P_\bullet: P_n\rightarrow\cdots\rightarrow P_1\rightarrow
 P_0\rightarrow 0$ be a part of a projective resolution of $M$. Set $F_i=P^*_{n-i}$ for $0\leq i\leq n$, so that one has a right
  acyclic projective
 complex
  $F_\bullet=\Hom_R(P_\bullet,R): F_n \rightarrow\cdots\rightarrow F_1\rightarrow
 F_0\rightarrow 0$, where $\underline{\H_0(F_{\bullet})}\cong\underline{\mathcal{T}_nM}$. Now, the tensor evaluation
 morphism
 $F_\bullet\otimes_RN\cong\Hom_R(P_\bullet,N)$ implies
 \[\begin{array}{rl}
\Tor_i^R(\mathcal{T}_nM,N)&\cong\H_i(F_\bullet\otimes N)\\
&\cong\H^{n-i}(\Hom_R(P_\bullet,N))\\
&\cong\Ext^{n-i}_R(M,N)
\end{array}\]
for all $i$, $1\leq i\leq n-1$.

 On the other hand, the fact that $\underline{\lambda^2\mathcal{T}_iM}\cong\underline{\Omega\mathcal{T}_{i+1}M}$ for all $i>0$,
 and the exact sequence (\ref{e}) in the
  Remark \ref{R}, imply
  $\underline{\lambda M}\cong\underline{\Omega^n\mathcal{T}_nM}$. Therefore $\Tor_i^R(\mathcal{T}_nM,N)\cong\Tor_{i-n}^R(\lambda M,N)$
  for
  $i>n$.

 (ii) By the homomorphism evaluation morphism, one has $ P_{\bullet}\otimes_R N\cong\Hom_R(F_\bullet,N)$, and so
\[\begin{array}{rl}
\Ext^i_R(\mathcal{T}_nM,N)&\cong\H^i(\Hom_R(F_\bullet,N))\\
&\cong\H_{n-i}(P_\bullet\otimes N)\\
&\cong\Tor_{n-i}^R(M,N)
\end{array}\]
 for all $i$, $1\leq i\leq n-1$.
  If $i>n$ then, just similar the part (i), we have $\Ext^{i-n}_R(\lambda M,N)\cong\Ext^i_R(\mathcal{T}_nM,N).$
\end{proof}
\begin{prop}
Let $M$ and $N$ be $R$--modules. Then the following statements hold true.
\begin{itemize}
         \item[(i)] If $M$ is horizontally linked module, then \emph{$\Ext^i_R(M,M)\cong\Ext^i_R(\lambda M,\lambda M)$}
             for all $i$, \emph{$1\leq i<\inf\{\rgr(M),\rgr(\lambda M)\}$}. In particular, if \emph{$\gd_R(M)=0$} then \emph{$\Ext^i_R(M,M)
             \cong\Ext^i_R(\lambda M,\lambda M)$} for all $i>0$.
         \item[(ii)] If $M$ and $N$ are horizontally self-linked modules, then
\emph{$\Ext^i_R(M,N)\cong\Ext^i_R(N,M)$} for all $i$, $1\leq
i<$\emph{$\inf\{\rgr(M),\rgr(N)\}$}.
\end{itemize}
\end{prop}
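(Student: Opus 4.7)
The plan is to use Lemma~\ref{p2} together with the symmetry of $\Tor$ and the horizontal linkage identity $\underline{M}\cong\underline{\lambda^2 M}$, relaying the given $\Ext$ groups through the functor $\mathcal{T}_n$ and then back down to $M$ and $\lambda M$.

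For part (i), first fix $i$ with $1\le i<k:=\inf\{\rgr(M),\rgr(\lambda M)\}$ and choose an integer $n$ with $i<n\le k$, so that both $\rgr(M)\ge n$ and $\rgr(\lambda M)\ge n$. By Lemma~\ref{p2}(i), $\Ext^i_R(M,M)\cong\Tor^R_{n-i}(\mathcal{T}_n M,M)$ and (using horizontal linkage to apply the lemma with $\lambda M$ in place of $M$) $\Ext^i_R(\lambda M,\lambda M)\cong\Tor^R_{n-i}(\mathcal{T}_n\lambda M,\lambda M)$. The next step is to identify the two right-hand sides. I would apply Lemma~\ref{p2}(ii) together with $\Tor$-symmetry to obtain
\[
\Ext^i_R(\mathcal{T}_n M,\lambda M)\cong\Tor^R_{n-i}(M,\lambda M)\cong\Tor^R_{n-i}(\lambda M,M)\cong\Ext^i_R(\mathcal{T}_n\lambda M,M)\qquad(1\le i<n),
\]
and then bridge the two expressions through the shift case $j>n$ of Lemma~\ref{p2}(ii), combined with the stable isomorphism $\underline{\lambda^2\mathcal{T}_n X}\cong\underline{\Omega\mathcal{T}_{n+1}X}$ of Remark~\ref{R} and the identity $\lambda^2 M\cong M$.

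For part (ii), self-linkage $M\cong\lambda M$ and $N\cong\lambda N$ allows an analogous chain
\[
\Ext^i_R(\mathcal{T}_n M,N)\cong\Tor^R_{n-i}(M,N)\cong\Tor^R_{n-i}(N,M)\cong\Ext^i_R(\mathcal{T}_n N,M)\qquad(1\le i<n),
\]
which I would descend to $\Ext^i_R(M,N)\cong\Ext^i_R(N,M)$ using the exact sequence (\ref{1}) for self-linked modules (which reads $0\to M\to P_1^*\to\Tr M\to 0$) and the resulting dimension-shift $\Ext^i_R(M,P)\cong\Ext^{i+1}_R(\Tr M,P)$. The ``in particular'' clause of (i) is then immediate, since $\gd_R(M)=0$ forces $\rgr(M)=\rgr(\lambda M)=\infty$, so the bound on $i$ becomes simply $i\ge 1$. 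The main obstacle I expect is the range analysis in Lemma~\ref{p2}: the $\Tor$-symmetry trick gives the identification only in the range $1\le i<n$, so pushing the comparison up to the range actually required for the theorem means iterating the shift case of Lemma~\ref{p2}(ii) and carefully tracking the stable isomorphisms that horizontal linkage introduces.
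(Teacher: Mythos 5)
Your opening moves coincide with the paper's: rewrite $\Ext^i_R(M,M)$ as $\Tor^R_{n-i}(\mathcal{T}_nM,M)$ via Lemma \ref{p2}(i), use the symmetry of $\Tor$, and feed the result back into Lemma \ref{p2}(ii). But the chain you actually display,
$\Ext^i_R(\mathcal{T}_n M,\lambda M)\cong\Tor^R_{n-i}(M,\lambda M)\cong\Tor^R_{n-i}(\lambda M,M)\cong\Ext^i_R(\mathcal{T}_n\lambda M,M)$,
connects the wrong pair of objects: each step is legitimate, yet both ends are just two computations of the \emph{mixed} group $\Tor^R_{n-i}(M,\lambda M)$, and nothing in the chain touches the diagonal groups $\Ext^i_R(M,M)$ and $\Ext^i_R(\lambda M,\lambda M)$ that the proposition compares. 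The relay the paper uses is $\Ext^i_R(M,M)\cong\Tor^R_{n-i}(M,\mathcal{T}_nM)\cong\Ext^i_R(\mathcal{T}_nM,\mathcal{T}_nM)$, i.e.\ Lemma \ref{p2}(ii) with $N=\mathcal{T}_nM$ itself, followed by a descent $\Ext^i_R(\mathcal{T}_nM,\mathcal{T}_nM)\cong\Ext^{i+n}_R(\mathcal{T}_nM,\lambda M)\cong\Ext^i_R(\lambda M,\lambda M)$.

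The more serious gap is in that descent, which you leave as ``bridge the two expressions through the shift case $j>n$.'' The first-variable shift $\Ext^{i+n}_R(\mathcal{T}_nM,-)\cong\Ext^i_R(\Omega^n\mathcal{T}_nM,-)\cong\Ext^i_R(\lambda M,-)$ is free, but the second-variable shift $\Ext^i_R(\mathcal{T}_nM,\mathcal{T}_nM)\cong\Ext^{i+n}_R(\mathcal{T}_nM,\Omega^n\mathcal{T}_nM)$ is not: applying $\Hom_R(\mathcal{T}_nM,-)$ to the syzygy sequences $0\to\Omega^{j+1}\mathcal{T}_nM\to P_j\to\Omega^{j}\mathcal{T}_nM\to0$ requires $\Ext^j_R(\mathcal{T}_nM,R)=0$ throughout the range $1\le j<2n$, i.e.\ $\rgr(\mathcal{T}_nM)\ge2n$. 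Establishing this vanishing is the heart of the paper's proof and is exactly where horizontal linkage enters: $\Ext^j_R(\mathcal{T}_nM,R)=0$ for $1\le j<n$ is automatic from Lemma \ref{p2}(ii), $\Ext^n_R(\mathcal{T}_nM,R)\cong\Ext^1_R(\Tr M,R)=0$ by Theorem \ref{MS}, and $\Ext^{n+j}_R(\mathcal{T}_nM,R)\cong\Ext^j_R(\lambda M,R)=0$ for $1\le j<n$ because $\rgr(\lambda M)\ge n$. Your proposal never identifies or proves this vanishing, and without it the comparison cannot be closed. The same omission affects part (ii), where one needs $\rgr(\mathcal{T}_mN)\ge2m$ (coming from $N$ being self-linked) to pass from $\Ext^i_R(\mathcal{T}_mN,\mathcal{T}_mM)$ to $\Ext^{i+m}_R(\mathcal{T}_mN,M)$; the single shift $\Ext^i_R(M,P)\cong\Ext^{i+1}_R(\Tr M,P)$ you invoke is not a substitute for it.
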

\begin{proof}
(i) Let $m$ be a positive integer such that
$1<m\leq\inf\{\rgr(M),\rgr(\lambda M)\}$. If $1\leq i<m$ then by
part (i) of Lemma \ref{p2}, one has
$\Ext^i_R(M,M)\cong\Tor_{m-i}^R(\mathcal{T}_mM,M)$. Also, by part
(ii) of Lemma \ref{p2}, one has
$\Tor_{m-i}^R(M,\mathcal{T}_mM)\cong\Ext^i_R(\mathcal{T}_mM,\mathcal{T}_mM)$
for $1\leq i<m$. Since $\rgr(M)\geq m$ it is easy to see that
$\rgr(\mathcal{T}_mM)\geq m$ and $\underline{\Tr M}\cong\underline
{\Omega^{m-1}\mathcal{T}_mM}$ by (\ref{e}). Now, by Theorem \ref{MS}
and the fact that  $M$ is horizontally linked, it follows that
$\Ext^m_R(\mathcal{T}_mM,R) \cong\Ext^1_R(\Tr M,R)=0$. If
$\rgr(\mathcal{T}_mM)<\infty$, then it follows that
$\rgr(\mathcal{T}_mM)=\rgr(\lambda M)+m$ which gives
$\rgr(\mathcal{T}_mM)\geq2m$. The exact sequences
$$0\longrightarrow\Omega^{i+1}\mathcal{T}_mM\longrightarrow P_i\longrightarrow\Omega^{i}\mathcal{T}_mM\longrightarrow 0 \text{ for }
 0\leq i<m,$$
where $P_i$ is a projective module for $0\leq i<m$, imply that
$\Ext^i_R(\mathcal{T}_mM,\mathcal{T}_mM)\cong\Ext^{i+m}_R(\mathcal{T}_mM,\lambda M)\cong\Ext^i_R(\lambda M,\lambda M)$ for
$1\leq i< m$.

(ii) Let $m$ be a positive integer such that
$1<m\leq\inf\{\rgr(M),\rgr(N)\}$. If $1\leq i<m$, then one observes,
similar the part (i), that
$\Ext^i_R(M,N)\cong\Ext^i_R(\mathcal{T}_mN,\mathcal{T}_mM)$ for
$1\leq i<m$. Since $M$ is horizontally self-linked module and
$m\leq\rgr(M)$ it follows from (\ref{e}) that
$\underline{M}\cong\underline {\Omega^m\mathcal{T}_mM}$ and
$\rgr(\mathcal{T}_mM)\geq2m$. Similarly we have
$\underline{N}\cong\underline{\Omega^m\mathcal{T}_mN}$ and
$\rgr(\mathcal{T}_mN)\geq2m$. Therefore
$\Ext^i_R(\mathcal{T}_mN,\mathcal{T}_mM)\cong\Ext^{i+m}_R(\mathcal{T}_mN,
M)\cong\Ext^i_R(N,M)$ for $1\leq i< m$ and the result follows.

\end{proof}

\section{reduced G-perfect modules}
Let $M$ be an $R$--module of positive $G$-dimension. The following inequalities are well-known
$$\gr_R(M)\leq\rgr(M)\leq\gd_R(M)\leq\pd_R(M).$$
In the literature, $M$ is called \emph{perfect} (resp.
\emph{G-perfect}) if $\gr_R(M)=\pd_R(M)$ (resp.
$\gr_R(M)=\gd_R(M)$). In this section we are interested in the case
where $\rgr(M)=\gd_R(M)$. So we bring the following definition.
\begin{defn}\label{rgr}
\emph{Let $M$ be an $R$--module of finite G-dimension, we say that $M$ is \emph{reduced G-perfect} if its $G$-dimension
is equal to its reduced grade, i.e.} \emph{$\rgr(M)=\gd_R(M)$}.
\end{defn}
Note that every reduced G-perfect module has a finite and positive $G$-dimension. It is obvious that $\Ext^{\tiny{\rgr_R(M)}}_R
(M,R)$ is the only non-zero module among all $\Ext^i_R(M,R)$ for $i>0$.
Therefore, every reduced $G$-perfect module is an Eilenberg-MacLane module of finite $G$-dimension as mentioned in \cite{Hor}.
These modules are studied in \cite{N1} and \cite{N2}.
\begin{exam}\cite[2.1(1)]{I}.\label{exa}
\emph{Let $M$ be an $R$--module. If n is a positive integer and
$\gr_R(M)\geq n$, then it is easy to see that $\mathcal{T}_nM$ is a
reduced G-perfect $R$--module of projective dimension $n$. In fact,
the functor $\mathcal{T}_n$ gives a duality between the category of
$R$--modules $X$ with $\gr_R(X)\geq n$ and the category of
$R$--modules $Y$ with $\rgr(Y)\geq n$ and $\pd_R(Y)\leq n.$}
\end{exam}
In the following result, the depth of any reduced \emph{$G$}-perfect module is determined.
\begin{thm}\label{d}
Let $(R,\fm)$ be a local Cohen-Macaulay ring of dimension d. If $M$ is reduced \emph{G}-perfect of \emph{G}-dimension $n$, then
\emph{$\depth_R(M)+\depth_R(\lambda M)=d+\depth_R(\Ext^n_R(M,R))$.}
\end{thm}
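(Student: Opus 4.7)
The plan is to combine two structural identities at index $n$. First, the fundamental exact sequence (\ref{e}) specializes to
\begin{equation*}
0 \longrightarrow \Ext^n_R(M,R) \longrightarrow \mathcal{T}_n M \longrightarrow \lambda^2 \mathcal{T}_n M \longrightarrow 0.
\end{equation*}
Second, iterating the stable isomorphism $\underline{\mathcal{T}_i M}\cong\underline{\Omega \mathcal{T}_{i+1} M}$ for $1\le i<n$ (valid because $\rgr(M)=n$ kills the $\Ext^i_R(M,R)$ term in (\ref{e}) at those indices) produces $\underline{\lambda M}\cong\underline{\Omega^n \mathcal{T}_n M}$, exactly as in the proof of Lemma \ref{l3}. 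Since $\gd_R(M)=n$ and $R$ is Cohen-Macaulay, Theorem \ref{Gor}(ii) gives $\depth_R(M)=d-n$.

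Next I would observe that $\lambda^2 \mathcal{T}_n M$ lies in $G(R)$. Indeed, $\Omega^n M \in G(R)$ because $\gd_R(M)=n$, hence $\mathcal{T}_{n+1} M = \Tr\Omega^n M \in G(R)$, and therefore $\lambda^2\mathcal{T}_n M$, being stably isomorphic to $\Omega\mathcal{T}_{n+1} M$ (a first syzygy of a totally reflexive module), is itself in $G(R)$. Over the Cohen-Macaulay ring $R$ the Auslander-Bridger formula then forces $\depth_R(\lambda^2\mathcal{T}_n M)=d$.

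The pivot of the argument is the bound $\depth_R(\Ext^n_R(M,R)) \leq d-n$. For any $\fp\in\Supp_R(\Ext^n_R(M,R))$, the fact that $\Ext^n_{R_\fp}(M_\fp,R_\fp)\neq 0$ forces $\gd_{R_\fp}(M_\fp)=n$ (the upper bound coming from the global hypothesis); by Theorem \ref{Gor}(ii) we get $\depth R_\fp \geq n$, and the Cohen-Macaulay hypothesis converts this to $\h(\fp)\geq n$, i.e.\ $\dim(R/\fp)\leq d-n$. Taking suprema yields $\dim_R(\Ext^n_R(M,R))\leq d-n$, whence a fortiori the claimed depth bound.

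Writing $s=\depth_R(\Ext^n_R(M,R))$, the depth lemma applied to the displayed exact sequence, together with $s\leq d-n<d=\depth_R(\lambda^2\mathcal{T}_n M)$, gives $\depth_R(\mathcal{T}_n M)=s$. The standard identity $\depth_R(\Omega^k N)=\min\{d,\depth_R(N)+k\}$, a direct consequence of iterated depth-lemma on $0\to\Omega N\to P\to N\to 0$, then yields $\depth_R(\lambda M)=\depth_R(\Omega^n\mathcal{T}_n M)=\min\{d,s+n\}=s+n$, since $s+n\leq d$. Adding, $\depth_R(M)+\depth_R(\lambda M)=(d-n)+(s+n)=d+s$, as claimed. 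The main obstacle is the dimension bound on $\Ext^n_R(M,R)$; everything else is depth-lemma bookkeeping, and the various nonvanishings needed along the way (e.g.\ $\lambda M\neq 0$ and $\lambda^2\mathcal{T}_n M\neq 0$) follow painlessly from $\gd_R(M)=n>0$.
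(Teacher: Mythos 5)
Your argument is correct, but it reaches the conclusion by a genuinely different route than the paper. The paper proves the formula by induction on $t=\depth_R(\Ext^n_R(M,R))$: after handling the base case $t=0$ exactly as you do (via $\fm\in\Ass_R(\Ext^n_R(M,R))$, the sequence (\ref{e}) and the Auslander--Bridger formula), it chooses $x\in\fm$ outside $\Ass_R(M)\cup\Ass(R)\cup\Ass_R(\Ext^n_R(M,R))$, passes to $\overline{R}=R/xR$, checks that $\overline{M}$ is again reduced G-perfect of G-dimension $n$ with $\Ext^n_{\overline{R}}(\overline{M},\overline{R})\cong\overline{\Ext^n_R(M,R)}$ and $\lambda M/x\lambda M\cong\lambda_{\overline{R}}\overline{M}$, and invokes the induction hypothesis. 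You instead run the base-case computation directly for arbitrary $t$: total reflexivity of $\lambda^2\mathcal{T}_nM$ gives $\depth_R(\lambda^2\mathcal{T}_nM)\geq d$, the bound $\depth_R(\Ext^n_R(M,R))\leq d-n<d$ lets the depth lemma pin down $\depth_R(\mathcal{T}_nM)=\depth_R(\Ext^n_R(M,R))$, and then $n$ applications of the depth lemma along $\underline{\lambda M}\cong\underline{\Omega^n\mathcal{T}_nM}$ finish. Both proofs hinge on the same dimension bound $\dim_R(\Ext^n_R(M,R))\leq d-n$ (the paper gets $\gr_R(\Ext^n_R(M,R))\geq n$ from \cite[Corollary 4.17]{AB}; your localization argument via Theorem \ref{Gor} is an equivalent, self-contained substitute). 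Your version is shorter and avoids the bookkeeping of verifying that reduced G-perfectness and the linkage data descend modulo a superficial element, which is the most delicate part of the paper's proof; the paper's reduction, on the other hand, yields the auxiliary isomorphism $\lambda M/x\lambda M\cong\lambda_{\overline{R}}\overline{M}$ as a byproduct. Two small points you wave at but should make explicit: $\Ext^n_R(M,R)\neq0$ (from Theorem \ref{Gor}(i), needed for $\depth\leq\dim$), and the fact that in the regime $\depth_R(\mathcal{T}_nM)+k\leq d$ none of the intermediate syzygies $\Omega^k\mathcal{T}_nM$ can vanish (a module of depth $<d$ is not free), which is what legitimizes the identity $\depth_R(\Omega^k\mathcal{T}_nM)=\depth_R(\mathcal{T}_nM)+k$ and the transfer of depth across the stable isomorphism with $\lambda M$.
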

\begin{proof} We will use the exact sequence (\ref{e}) in the Remark \ref{R}. Note that $\underline{\lambda^2\mathcal{T}_iM}\cong
\underline{\Omega\mathcal{T}_{i+1}M}$ for all $i>0$. As $M$ is a
reduced G-perfect module, it follows that $\underline{\lambda
M}\cong\underline{\Omega^n\mathcal{T}_nM}$.

We prove the claim by induction on $t=\depth_R(\Ext^n_R(M,R))$. Assume that $t=0$ so that $\fm\in\Ass_R(\Ext^n_R(M,R))$ and then
$\depth_R(\mathcal{T}_nM)=0$ by (\ref{e}). As $\underline{\lambda M}\cong\underline{\Omega^n\mathcal{T}_nM}$, we have $\depth_R
(\lambda M)=n$. Now the equality $\depth_R(M)+\depth_R(\lambda M)=d$, follows by the Auslander-Bridger formula.

Let $t>0$. By \cite[Corollary 4.17]{AB}, $\gr_R(\Ext^n_R(M,R))\geq
n$. Note that $$\dim_R(\Ext^n_R(M,R))=d-\gr_R(\Ext^n_R(M,R))$$ over
the Cohen-Macaulay local ring $R$. Therefore we have
\[\begin{array}{rl}
\depth_R(\Ext^n_R(M,R))&\leq\dim_R(\Ext^n_R(M,R))\\
&\leq d-n\\
&=\depth_R(M).
\end{array}\]
Now let $x\in\fm\smallsetminus\underset{\fp\in X}\cup\fp$, where $X=\Ass_R(M)\cup\Ass(R)\cup\Ass_R(\Ext^n_R(M,R))$. Taking
$\overline{R}=R/xR$ and $\overline{M}=M/xM$,  we have $\gd_R(M)=\gd_{\overline{R}}(\overline{M})$ by \cite[Corollary 4.30]{AB}.
 Apply the functor $\Hom_R(M,-)$ on the exact sequence
$0\rightarrow R\overset{x}\rightarrow R\rightarrow \overline{R}\rightarrow0$. As $x$ is a nonzero-divisor on $\Ext^n_R(M,R)$,
we obtain the following exact sequences
\begin{equation}\label{31}
0\longrightarrow M^*\overset{x}\longrightarrow M^*\longrightarrow \Hom_R(M,\overline{R})\longrightarrow 0
\end{equation}
\begin{equation}\label{32}
 0\longrightarrow\Ext^n_R(M,R)\overset{x}\longrightarrow \Ext^n_R(M,R)\longrightarrow \Ext^n_R(M,\overline{R})\longrightarrow 0.
\end{equation}
Moreover, $\Ext^i_R(M,\overline{R})=0$ for $1\leq i<n$.
From (\ref{31}), (\ref{32}) and the standard isomorphisms we get
$\Hom_{\overline{R}}(\overline{M},\overline{R})\cong\overline{\Hom_R(M,R)}$ and
$\Ext^n_{\overline{R}}(\overline{M},\overline{R})\cong\overline{\Ext^n_R(M,R)}$.
Now it is easy to see that $\rgr_{\overline{R}}(\overline{M})=\rgr_R(M)$. As a result, $\overline{M}$ is also a reduced G-perfect
 $\overline{R}$--module of $G$-dimension $n$. As $\depth_{\overline{R}}(\Ext^n_{\overline{R}}(\overline{M},\overline{R}))
 =\depth_{\overline{R}}(\overline{\Ext^n_R(M,R)})=t-1$,
\begin{equation}\label{33}
\depth_{\overline{R}}(\overline{M})+\depth_{\overline{R}}(\lambda_{\overline{R}}\overline{M})=\dim\overline{R}+
\depth_{\overline{R}}(\Ext^n_{\overline{R}}(\overline{M},\overline{R})),
\end{equation}
by induction hypothesis.

Now let $P_1\longrightarrow P_0\longrightarrow M\longrightarrow 0$
 be the minimal projective presentation of $M$ and consider the exact sequence $0\longrightarrow M^*\longrightarrow
 P^*_0\longrightarrow \lambda M\longrightarrow 0$. As $\lambda M$ is a syzygy module and $x$ is a non-zero-divisor on $R$, $x$
  is also a non-zero-divisor on
 $\lambda M$. Thus there is a commutative diagram
with exact rows\\
$$\begin{CD}
&&&&&&&&\\
\ \ &&&& 0 @>>> M^*/{xM^*} @>>>P^*_0/{xP^*_0} @>>> {\lambda M}/{x\lambda M} @>>>0&  \\
&&&&&&  @VV{\cong}V @VV{\cong}V \\
\ \  &&&& 0 @>>>\Hom_{\overline R}(\overline M,\overline R) @>>> \Hom_{\overline R}(\overline P_0,\overline R)
 @>>>\lambda_{\overline{R}}\overline{M} @>>>0&\\
\end{CD}$$\\
which implies that $\lambda M/{x\lambda M} \cong \lambda_{\overline{R}}\overline{M}$. Now, (\ref{33}) implies
$$\depth_R(M)+\depth_R(\lambda_RM)=\dim R+\depth_R(\Ext^n_R(M,R)).$$
\end{proof}
Theorem \ref{d} enables us to prove it for a more general setting.
\begin{thm}\label{t1}
Let $R$ be a Cohen-Macaulay local ring of dimension $d$ and let $M$
be an $R$--module of finite Gorenstein dimension $n$. If
\emph{$$\depth_R(\Ext^n_R(M,R))< \depth_R(\Ext^{n-i}_R(M,R))-i-1 \
\text{\it for all}\ i, 0<i<n,$$}
 then \emph{$\depth_R(M)+\depth_R(\lambda M)=
d+\depth_R(\Ext^n_R(M,R))$}.
\end{thm}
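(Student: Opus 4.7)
Set $t = \depth_R(\Ext^n_R(M,R))$. By Theorem \ref{Gor}(ii), $\depth_R(M) = d - n$, so the claim reduces to showing $\depth_R(\lambda M) = n + t$. I first record the bound $t \leq d - n$: by \cite[Corollary 4.17]{AB} we have $\gr_R(\Ext^n_R(M,R)) \geq n$, and since $R$ is Cohen--Macaulay, $\dim_R(\Ext^n_R(M,R)) = d - \gr_R(\Ext^n_R(M,R)) \leq d - n$, so $t \leq \dim_R(\Ext^n_R(M,R)) \leq d - n$. This uniform bound guarantees that none of the depths produced below exceed $d$, which lets the syzygy formula $\depth_R(\Omega X) = \depth_R(X) + 1$ (valid when $\depth_R(X) < d$) apply cleanly throughout the argument.

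The strategy is to compute $\depth_R(\mathcal{T}_k M)$ by downward induction on $k$, establishing
$$ \depth_R(\mathcal{T}_k M) \;=\; t + (n - k) \quad \text{for every } 1 \leq k \leq n. $$
The engine is the exact sequence (\ref{e}),
$$ 0 \longrightarrow \Ext^k_R(M,R) \longrightarrow \mathcal{T}_k M \longrightarrow \lambda^2 \mathcal{T}_k M \longrightarrow 0, $$
coupled with the stable isomorphism $\underline{\lambda^2 \mathcal{T}_k M} \cong \underline{\Omega \mathcal{T}_{k+1} M}$ from Remark \ref{R} (which preserves depth). For the base $k = n$: since $\gd_R(M) = n$, $\Omega^n M$ lies in $G(R)$, so $\mathcal{T}_{n+1} M = \Tr \Omega^n M$ is in $G(R)$ and is therefore maximal Cohen--Macaulay; hence $\depth_R(\lambda^2 \mathcal{T}_n M) = \depth_R(\Omega \mathcal{T}_{n+1} M) = d$, and the depth lemma applied to the displayed sequence with outer depths $t$ and $d$ pins down $\depth_R(\mathcal{T}_n M) = t$. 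For the inductive step at $1 \leq k < n$: taking $i = n - k$, the hypothesis gives $\depth_R(\Ext^k_R(M,R)) > t + (n - k) + 1$, while the inductive hypothesis together with the syzygy formula yields $\depth_R(\lambda^2 \mathcal{T}_k M) = \depth_R(\Omega \mathcal{T}_{k+1} M) = t + (n - k)$; the strict depth gap of at least two between the outer terms forces $\depth_R(\mathcal{T}_k M) = t + (n - k)$ by the depth lemma. If $\Ext^k_R(M,R) = 0$ for some $k < n$, the sequence degenerates to a stable isomorphism $\mathcal{T}_k M \cong \Omega \mathcal{T}_{k+1} M$, yielding the same equality without invoking the hypothesis.

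Having established the induction, $\underline{\lambda M} \cong \underline{\Omega \mathcal{T}_1 M}$ and $\depth_R(\mathcal{T}_1 M) = t + n - 1 < d$ give $\depth_R(\lambda M) = t + n$; combined with $\depth_R(M) = d - n$ this yields the asserted identity. The main subtlety — and the reason the rather technical hypothesis is indispensable — is that the depth lemma only identifies the middle term of a short exact sequence when the outer depths differ by more than one; the strict inequality $\depth_R(\Ext^n_R(M,R)) < \depth_R(\Ext^{n-i}_R(M,R)) - i - 1$ is precisely calibrated to supply a gap of at least two at every inductive step, so that the depth of $\mathcal{T}_k M$ is truly pinned down rather than merely bounded from below.
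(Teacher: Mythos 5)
Your proof is correct, but it takes a genuinely different route from the paper's. The paper proves the statement by induction on $n=\gd_R(M)$: it passes from $M$ to $\Omega M$ (which has $G$-dimension $n-1$ and satisfies the shifted hypothesis), applies the inductive hypothesis, and then recovers $\depth_R(\lambda M)$ from $\depth_R(\lambda\Omega M)$ via the exact sequence $0\to\Ext^1_R(M,R)\to\Tr M\to\lambda^2\Tr M\to 0$; crucially, its base case $n=1$ is delegated to Theorem \ref{d}, whose own proof is an induction on $\depth_R(\Ext^n_R(M,R))$ carried out by cutting with a carefully chosen non-zerodivisor and descending to $R/xR$. You instead run a single downward induction along the tower $\mathcal{T}_nM,\dots,\mathcal{T}_1M$, using only the sequences (\ref{e}), the stable isomorphism $\underline{\lambda^2\mathcal{T}_kM}\cong\underline{\Omega\mathcal{T}_{k+1}M}$, and the depth lemma, anchoring the base case $k=n$ in the fact that $\mathcal{T}_{n+1}M=\Tr\Omega^nM$ lies in $G(R)$ and is therefore maximal Cohen--Macaulay. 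This buys you two things: the argument never invokes Theorem \ref{d} (and in fact re-proves it, since for a reduced $G$-perfect module all intermediate $\Ext$'s vanish and every inductive step is the degenerate one), and it makes transparent exactly where the numerical hypothesis enters, namely as the gap of at least two needed for the depth lemma to pin down the middle term. When unrolled, the paper's induction visits essentially the same exact sequences, so the two proofs are cousins; the real divergence is your direct treatment of the base case versus the paper's regular-element descent. Two small points you should still make explicit: the case $n=0$ (where your indexed induction over $1\le k\le n$ is empty, though the conclusion still follows since $\Tr M\in G(R)$ forces $\lambda M$ and $M^*$ to be maximal Cohen--Macaulay), and the degenerate situations where some $\mathcal{T}_{k+1}M$ or its syzygy is projective or zero, which your depth-lemma bookkeeping handles but which deserve a sentence.
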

\begin{proof}
First note that if $n=0$ then $\gd_R(M^*)=\gd_R(\lambda M)=0$ and so
the assertion is obvious. We proceed by induction on $n$. If $n=1$
then $\gd_R(M)=\rgr(M)$ that is $M$ is reduced G-perfect. Hence, by
Theorem \ref{d}, the assertion holds. Suppose that $n>1$. We have
$\gd_R(\Omega M)=n-1$ and $\Ext^i_R(M,R)\cong\Ext^{i-1}_R(\Omega
M,R)$ for each $i>1$. By induction hypothesis we have
\begin{equation}\label{3-3}\depth_R(\Omega M)+\depth_R(\lambda\Omega
M)=d+\depth_R(\Ext^n_R(M,R)).\end{equation}
 Therefore
$\depth_R(\lambda\Omega M)+1=n+\depth_ (\Ext^n_R(M,R))<
\depth_R(\Ext^1_R(M,R))$. Now consider the exact sequence
$0\rightarrow\Ext^1_R(M,R)\rightarrow\Tr M\rightarrow\lambda^2\Tr
M\rightarrow0$. As $\underline{\lambda^2\Tr
M}\cong\underline{\lambda\Omega M}$, it easy to see that
$\depth_R(\lambda\Omega M)= \depth_R(\Tr M)=\depth_R(\lambda M)-1$.
Finally, by using the trivial formula $\depth_R(\Omega
M)=\depth_R(M)+1$ and (\ref{3-3}), we have
$\depth_R(M)+\depth_R(\lambda M)= d+\depth_R(\Ext^n_R(M,R))$.
\end{proof}
The following result characterizes a reduced G-perfect module to be
horizontally linked.
\begin{prop}\label{P1}
Let $M$ be a reduced \emph{G}-perfect $R$--module of \emph{G}-dimension $n$, then the following statements hold true.
\begin{itemize}
     \item[(i)] {\emph{$\Ext^i_R(\lambda M,R)\cong\Ext^{n+i}_R(\Ext^n_R(M,R),R)$} for all $i>0$.}
      \item[(ii)]{Assume that $M$ is stable $R$--module. Then $M$ is horizontally linked if and only if
      \emph{$\rgr(M)+\rgr(\lambda M)=\gr_R(\Ext^n_R(M,R)).$}}
\end{itemize}
\end{prop}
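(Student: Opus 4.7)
The plan is to establish part (i) first, then derive part (ii) by recognizing the horizontal-linkage criterion of Theorem \ref{MS} as the vanishing of a single Ext of an Ext, and matching this with a grade computation.

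For part (i), the starting point is the exact sequence (\ref{e}) at $k=n$:
\[
0\longrightarrow \Ext^n_R(M,R)\longrightarrow \mathcal{T}_nM\longrightarrow \lambda^2\mathcal{T}_nM\longrightarrow 0.
\]
Since $\gd_R(M)=n$, Theorem \ref{Gor}(i) forces $\Omega^nM$ to lie in $G(R)$, hence so do $\mathcal{T}_{n+1}M=\Tr\Omega^nM$ and $\Omega\mathcal{T}_{n+1}M$. Because $\lambda^2\mathcal{T}_nM$ is stably isomorphic to $\Omega\mathcal{T}_{n+1}M$ (Remark \ref{R}), I obtain $\Ext^i_R(\lambda^2\mathcal{T}_nM,R)=0$ for all $i>0$, so the long exact sequence of $\Hom_R(-,R)$ applied to the display yields
\[
\Ext^i_R(\mathcal{T}_nM,R)\cong \Ext^i_R(\Ext^n_R(M,R),R)\qquad\text{for every }i\geq 1.
\]
Combining this with Lemma \ref{p2}(ii) (taking $N=R$), which gives $\Ext^i_R(\mathcal{T}_nM,R)\cong\Ext^{i-n}_R(\lambda M,R)$ for $i>n$, yields the formula asserted in (i) after reindexing.

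For part (ii), the key reduction is to show
\[
\Ext^1_R(\Tr M,R)\cong \Ext^n_R(\Ext^n_R(M,R),R).
\]
Because $\rgr(M)=n$, each module $\Ext^k_R(M,R)$ vanishes for $1\leq k<n$, so the sequence (\ref{e}) at each such $k$ collapses to a stable isomorphism $\underline{\mathcal{T}_kM}\cong\underline{\lambda^2\mathcal{T}_kM}\cong\underline{\Omega\mathcal{T}_{k+1}M}$. The dimension-shifting identity $\Ext^j_R(\Omega X,R)\cong \Ext^{j+1}_R(X,R)$ then supplies $\Ext^k_R(\mathcal{T}_kM,R)\cong \Ext^{k+1}_R(\mathcal{T}_{k+1}M,R)$ for $1\leq k<n$. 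Iterating identifies $\Ext^1_R(\Tr M,R)=\Ext^1_R(\mathcal{T}_1M,R)$ with $\Ext^n_R(\mathcal{T}_nM,R)$, and the latter is $\Ext^n_R(\Ext^n_R(M,R),R)$ by the isomorphism already established in (i).

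To finish, write $E=\Ext^n_R(M,R)$ and recall the Auslander--Bridger bound $\gr_R(E)\geq n$ from \cite[Corollary 4.17]{AB}. By part (i), $\rgr(\lambda M)=\inf\{j\geq 1:\Ext^{n+j}_R(E,R)\neq 0\}$, so a direct case split on whether $\Ext^n_R(E,R)$ vanishes shows that the equality $\rgr(M)+\rgr(\lambda M)=\gr_R(E)$ holds if and only if $\Ext^n_R(E,R)=0$. By Theorem \ref{MS} and the identification in the previous paragraph, this is exactly the condition for the stable module $M$ to be horizontally linked. The main obstacle is the iterated shifting step: one has to make sure the chain of stable isomorphisms actually transports the \emph{specific} first-Ext group $\Ext^1_R(\Tr M,R)$ to $\Ext^n_R(\mathcal{T}_nM,R)$, and here the vanishing of $\Ext^i_R(\lambda^2\mathcal{T}_nM,R)$ secured in part (i) is precisely what permits the final identification with $\Ext^n_R(E,R)$.
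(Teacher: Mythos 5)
Your proof is correct, and part (i) is essentially the paper's own argument: both rest on the sequence (\ref{e}) at $k=n$, the observation that $\gd_R(\lambda^2\mathcal{T}_nM)=0$ (so that applying $\Hom_R(-,R)$ to that sequence gives $\Ext^i_R(\mathcal{T}_nM,R)\cong\Ext^i_R(\Ext^n_R(M,R),R)$ for $i>0$), and the stable isomorphism $\underline{\lambda M}\cong\underline{\Omega^n\mathcal{T}_nM}$; whether one phrases the last step via Lemma \ref{p2}(ii) with $N=R$ or directly is immaterial, since that case of the lemma is proved by exactly this shift. In part (ii) you diverge on one step. The paper invokes Theorem \ref{MS} together with Theorem \ref{AS}(c) to translate ``$M$ is horizontally linked'' into ``$\gr_R(\Ext^n_R(M,R))\geq n+1$'', using that $\Ext^n_R(M,R)$ is the only nonvanishing positive-degree Ext of a reduced $G$-perfect module. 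You instead establish the finer isomorphism $\Ext^1_R(\Tr M,R)\cong\Ext^n_R(\Ext^n_R(M,R),R)$ by iterating the collapse of (\ref{e}) for $1\leq k<n$ and dimension-shifting along $\underline{\mathcal{T}_kM}\cong\underline{\Omega\mathcal{T}_{k+1}M}$. Your route is more self-contained, bypassing the Auslander--Bridger syzygy theorem at the cost of importing $\gr_R(\Ext^n_R(M,R))\geq n$ from \cite[Corollary 4.17]{AB} for the case split (a bound the paper's version gets for free from Theorem \ref{AS}(c)), and it identifies the linkage obstruction module itself rather than merely characterizing its vanishing. The final bookkeeping with $\rgr(M)=n$, $\rgr(\lambda M)$ computed from part (i), and $E=\Ext^n_R(M,R)\neq0$ is the same in both arguments.
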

\begin{proof}
(i) As $M$ is reduced G-perfect, $\rgr(M)=\gd_R(M)=n$. Consider the
exact sequence (\ref{e}) for $i=n$
\begin{equation}
 0\longrightarrow \Ext^n_R(M,R)\longrightarrow \mathcal{T}_nM\longrightarrow \lambda^2
\mathcal{T}_nM\longrightarrow 0.\label{n}
\end{equation}
As $\gd_R(M)=n$, it follows that $\gd_R(\Omega^nM)=0$ and so
$\gd_R(\mathcal{T}_{n+1}M)=0$ by \cite[Lemma 4.9]{AB}. Therefore
$\gd_R(\Omega\mathcal{T}_{n+1}M)=0$. As
$\underline{\lambda^2\mathcal{T}_nM}\cong\underline{\Omega\mathcal{T}_{n+1}M}$,
we have $\gd_R(\lambda^2\mathcal{T}_nM)=0$.
 Now from the exact sequence (\ref{n}) we obtain the isomorphisms
$\Ext^i_R(\Ext^n_R(M,R),R)\cong\Ext^{i}_R(\mathcal{T}_nM,R)$ for all $i>0$. Note that $\rgr(M)=n$ implies
$\underline{\lambda M}\cong\underline{\Omega^{n}\mathcal{T}_nM}$ and so $\Ext^i_R(\lambda M,R)\cong\Ext^{n+i}_R(\mathcal{T}_nM,R)$
 for all $i>0$. The claim follows.

(ii) Assume that $M$ is stable $R$--module. By Theorem \ref{MS} and Theorem \ref{AS}, $M$ is horizontally linked module if and only if
 $\gr_R(\Ext^n_R(M,R))\geq n+1$. Now by the part (i), it is obvious that $M$ is horizontally linked module if and only if
  $\rgr(M)+\rgr(\lambda M)=\gr_R(\Ext^n_R(M,R))$.
\end{proof}
The following Corollary shows how reduced G-perfect property is
preserved under horizontally linkage.
\begin{cor}
Let $M$ be a horizontally linked $R$--module. Let $n$ and $t$ be two integers, then the following statements are equivalent.
\begin{itemize}
   \item[(i)] $M$ is reduced \emph{G}-perfect of \emph{G}-dimension $n$ and \emph{$\Ext^n_R(M,R)$} is \emph{G}-perfect of
   \emph{G}-dimension $n+t$.
     \item[(ii)]$\lambda M$ is reduced \emph{G}-perfect of \emph{G}-dimension $t$ and \emph{$\Ext^t_R(\lambda M,R)$} is
     \emph{G}-perfect of \emph{G}-dimension $n+t$.
\end{itemize}
\end{cor}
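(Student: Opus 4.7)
The plan is to reduce the equivalence to a single implication by symmetry and then derive (ii) from (i) using Proposition \ref{P1} together with the classical G-duality for G-perfect modules. For the symmetry, observe that whenever $M$ is horizontally linked, so is $\lambda M$: it is stable (being a syzygy in a minimal presentation of $\Tr M$), and applying $\lambda$ to the stable isomorphism $M\cong\lambda^2 M$ yields $\lambda M\cong\lambda^3 M=\lambda^2(\lambda M)$. Thus, together with $\lambda^2 M\cong M$, swapping the pairs $(M,n)\leftrightarrow(\lambda M,t)$ turns statement (ii) into statement (i), so it suffices to prove (i)$\Rightarrow$(ii).

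Assuming (i), Proposition \ref{P1}(ii) gives
\[
\rgr(\lambda M)=\gr_R(\Ext^n_R(M,R))-\rgr(M)=(n+t)-n=t.
\]
Proposition \ref{P1}(i) supplies the isomorphism
\[
\Ext^i_R(\lambda M,R)\cong\Ext^{n+i}_R(\Ext^n_R(M,R),R)\quad\text{for all }i>0.
\]
Because $\Ext^n_R(M,R)$ is G-perfect of G-dimension $n+t$, its grade condition and Theorem \ref{Gor}(i) force $\Ext^j_R(\Ext^n_R(M,R),R)=0$ for $j\neq n+t$ and nonzero at $j=n+t$. Consequently $\Ext^i_R(\lambda M,R)=0$ for every $i>0$ with $i\neq t$, while $\Ext^t_R(\lambda M,R)\neq 0$. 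Since $\lambda M$ has finite G-dimension (inherited from $M$ via the defining sequence (\ref{1})), another application of Theorem \ref{Gor}(i) yields $\gd_R(\lambda M)=t$, and hence $\lambda M$ is reduced G-perfect of G-dimension $t$.

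To conclude (ii), specialize the displayed isomorphism at $i=t$:
\[
\Ext^t_R(\lambda M,R)\cong\Ext^{n+t}_R(\Ext^n_R(M,R),R).
\]
Now invoke the classical G-perfect self-duality: starting from a length-$(n+t)$ G-resolution $0\to G_{n+t}\to\cdots\to G_0\to\Ext^n_R(M,R)\to 0$, the vanishing of $\Ext^j_R(\Ext^n_R(M,R),R)$ for $j\neq n+t$ converts the dualized complex into an exact G-resolution $0\to G_0^*\to\cdots\to G_{n+t}^*\to\Ext^t_R(\lambda M,R)\to 0$ of length $n+t$ (using that $G(R)$ is closed under $\Hom_R(-,R)$). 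Dualizing once more and using $G_i^{**}\cong G_i$ delivers $\Ext^j_R(\Ext^t_R(\lambda M,R),R)=0$ for $j<n+t$ and $\Ext^{n+t}_R(\Ext^t_R(\lambda M,R),R)\cong\Ext^n_R(M,R)\neq 0$. Therefore $\gr_R=\gd_R=n+t$, so $\Ext^t_R(\lambda M,R)$ is G-perfect of G-dimension $n+t$. The principal technical obstacle is this finiteness-of-G-dimension step, which would not follow from the vanishing of higher Ext's alone and must be extracted from the explicit dualization just sketched.
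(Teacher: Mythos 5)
Your proof is correct, and its first half tracks the paper's: both arguments rest on Proposition \ref{P1}(ii) to get $\rgr(\lambda M)=t$ and on finiteness of $\gd_R(\lambda M)$ to conclude that $\lambda M$ is reduced G-perfect (the paper extracts the dimension formula $\gd_R(\lambda M)+n=\gd_R(\Ext^n_R(M,R))$ directly from the exact sequence (\ref{n}), while you read off the full vanishing pattern of $\Ext^i_R(\lambda M,R)$ from Proposition \ref{P1}(i) and then apply Theorem \ref{Gor}(i) --- a cosmetic difference). Where you genuinely diverge is the second assertion of (ii): the paper uses $M\cong\lambda^2M$ to re-run the first half with $\lambda M$ in place of $M$, so that $\Ext^t_R(\lambda M,R)$ is handled entirely inside Proposition \ref{P1}; you instead specialize Proposition \ref{P1}(i) at $i=t$ and invoke the classical self-duality of G-perfect modules under $\Ext^{\gr}(-,R)$, obtained by dualizing a finite G-resolution of $\Ext^n_R(M,R)$. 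That duality is a standard Golod/Auslander--Bridger fact not stated in the paper, and you correctly flag that the delicate point is producing the explicit length-$(n+t)$ G-resolution of $\Ext^{n+t}_R(\Ext^n_R(M,R),R)$ rather than relying on Ext-vanishing alone. Both routes are valid; the paper's is more self-contained within its linkage machinery, while yours makes the double-duality $\Ext^{n+t}_R(\Ext^{n+t}_R(-,R),R)\cong(-)$ explicit. One small quibble: your justification that $\lambda M$ is stable (``being a syzygy in a minimal presentation'') is not by itself a proof, since a minimal syzygy can a priori acquire free summands; the fact you need is \cite[Proposition 4]{MS}, which the paper also invokes, so nothing is actually lost.
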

\begin{proof}
 (i)$\Rightarrow$(ii). As we have seen in the proof of Proposition \ref{P1}, $\gd_R(\lambda^2\mathcal{T}_nM)=0$ and also
 $\underline{\lambda M}\cong\underline{\Omega^n\mathcal{T}_nM}$. Therefore, from the exact sequence (\ref{n}), we conclude
 that $\gd_R(\lambda M)<\infty$ if and only if
$\gd_R(\Ext^n_R(M,R))<\infty$. Also, from the part (i) of Proposition \ref{P1}, if these dimensions are finite then we have
 $\gd_R(\lambda M)+n=\gd_R(\Ext^n_R(M,R))$. Hence, by part (ii)
of Proposition \ref{P1}, it is obvious that $\lambda M$ is reduced
G-perfect of G-dimension $t$ if and only if $\Ext^n_R(M,R)$ is G-perfect of G-dimension $n+t$.

As $M$ is horizontally linked, $M\cong\lambda^2M$, and $M$ is reduced G-perfect of G-dimension $n$, by replacing $M$ by
$\lambda M$ in the above argument one has $\Ext^t_R(\lambda M,R)$ is G-perfect of G-dimension $n+t$.

(ii)$\Rightarrow$(i). It can be proved by the same argument.

\end{proof}
\begin{prop}
Let $M$ be a horizontally linked $R$--module of \emph{G}-dimension $n$ with $0<n<\infty$. Then $M$ is reduced \emph{G}-perfect
if and only if $\lambda M$ satisfies $\widetilde{S}_n$.
\end{prop}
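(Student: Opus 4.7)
The plan is to derive this as a direct corollary of Proposition \ref{c}, once we record the a priori bound on the reduced grade. First I would recall that for a module of finite and positive $G$-dimension one always has the chain
\[
\gr_R(M)\le\rgr(M)\le\gd_R(M),
\]
which was noted at the start of Section~3. Since $\gd_R(M)=n<\infty$, this gives $\rgr(M)\le n$ unconditionally, and therefore $M$ being reduced $G$-perfect (i.e.\ $\rgr(M)=\gd_R(M)=n$) is equivalent to the single inequality $\rgr(M)\ge n$.

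Next I would invoke Proposition \ref{c}, which is applicable because $M$ is horizontally linked of finite $G$-dimension. That proposition asserts the equivalence of $\rgr(M)\ge k$ with $\lambda M$ satisfying $\widetilde{S}_k$, for any positive integer $k$. Specialising to $k=n$ converts the inequality $\rgr(M)\ge n$ into the Serre-type condition $\lambda M$ satisfies $\widetilde{S}_n$.

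Combining the two steps: $M$ is reduced $G$-perfect $\Longleftrightarrow$ $\rgr(M)\ge n$ $\Longleftrightarrow$ $\lambda M$ satisfies $\widetilde{S}_n$, which is what we wanted. There is essentially no obstacle; the only thing to be careful about is that the hypothesis $0<n<\infty$ is exactly what is needed to ensure Proposition \ref{c} applies (it needs $\gd_R(M)<\infty$, which we have) and that the bound $\rgr(M)\le\gd_R(M)$ is meaningful (it needs $\gd_R(M)>0$, otherwise $\rgr(M)=\infty$ and the notion of reduced $G$-perfect would degenerate). Thus the statement is a clean repackaging of Proposition \ref{c} in the reduced $G$-perfect setting.
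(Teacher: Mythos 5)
Your proposal is correct and follows essentially the same route as the paper: the authors likewise note that $0<\gd_R(M)=n<\infty$ forces $\rgr(M)\le n$, so that reduced $G$-perfectness amounts to $\rgr(M)\ge n$, and then apply Proposition \ref{c} with $k=n$. Nothing further is needed.
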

\begin{proof}
As $M$ has a positive and finite Gorenstein dimension $n$, $\rgr(M)\leq\gd_R(M)$. On the other hand, by Proposition \ref{c},
$\lambda M$ satisfies $\widetilde{S}_n$ if and only if $\rgr(M)\geq n$. Therefore the assertion is obvious.
\end{proof}

Let $M$ be a perfect $R$--module of projective dimension $n$ and let
$N$ be an $R$--module. It is well-known that
$\Tor^R_i(\Ext^n_R(M,R),N)\cong\Ext^{n-i}_R(M,N)$ for all $i$,
$0\leq i<n$.

To find an answer for the case $i> n$, we consider $M$ to be reduced
$G$-perfect and put some conditions on $N$. More precisely
\begin{prop}\label{P5}
Let $M$ be a reduced \emph{G}-perfect $R$--module of \emph{G}-dimension $n$ and let $N$ be an $R$--module of finite flat or injective
dimension. Then we have the following isomorphisms.
\begin{itemize}
     \item[(i)]{\emph{$\Tor^R_i(\Ext^n_R(M,R),N)\cong \left\lbrace
               \begin{array}{c l}
               \Ext^{n-i}_R(M,N)\ \ & \text{ \ \ $0\leq i<n$}\\
               \Tor^R_{i-n}(\lambda M,N)\ \ & \text{ \ \ $i>n$,}\\
 \end{array}
\right.$}}\\
\\
\item[(ii)]{\emph{$\Ext^i_R(\Ext^n_R(M,R),N)\cong \left\lbrace
            \begin{array}{c l}
             \Tor^R_{n-i}(M,N)\ \ & \text{ \ \ $0\leq i<n$,}\\
             \Ext^{i-n}_R(\lambda M,N)\ \ & \text{ \ \ $i>n$}.\\
 \end{array}
\right.$}}
\end{itemize}
\end{prop}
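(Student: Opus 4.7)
The plan is to mimic the strategy of Proposition~\ref{P1}(i) with $N$ in place of $R$. The main engine is the exact sequence (\ref{n}),
\begin{equation*}
0\longrightarrow \Ext^n_R(M,R)\longrightarrow \mathcal{T}_nM\longrightarrow \lambda^2\mathcal{T}_nM\longrightarrow 0,
\end{equation*}
in which $\lambda^2\mathcal{T}_nM$ lies in $G(R)$ (as was observed in the proof of Proposition~\ref{P1}). First I would record the vanishing $\Tor^R_i(X,N)=0=\Ext^i_R(X,N)$ for every $i\geq 1$, for $X\in G(R)$ and $N$ of finite flat or injective dimension; this follows by dimension shifting, since $X$ is stably isomorphic to an arbitrarily high syzygy of a module in $G(R)$, and the resulting shifted Tor/Ext with $N$ vanish once the shift exceeds the flat/injective dimension of $N$.

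Applying this vanishing to $\lambda^2\mathcal{T}_nM$, the long exact sequences of (\ref{n}) obtained from $-\otimes_RN$ and $\Hom_R(-,N)$ collapse, for every $i\geq 1$, to
\begin{equation*}
\Tor^R_i(\Ext^n_R(M,R),N)\cong \Tor^R_i(\mathcal{T}_nM,N),\qquad \Ext^i_R(\Ext^n_R(M,R),N)\cong \Ext^i_R(\mathcal{T}_nM,N).
\end{equation*}
Invoking Lemma~\ref{p2} on the right-hand sides then produces the claimed formulas in the ranges $1\leq i<n$ and $i>n$; this handles every case except $i=0$.

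The main obstacle is the case $i=0$, which I would handle directly from a projective resolution $P_\bullet\to M$. Writing $d^j$ for the dualized differential $P_j^*\to P_{j+1}^*$, one has $\mathcal{T}_nM=\coker(d^{n-1})$, and comparing with (\ref{n}) the first isomorphism theorem gives the canonical identification
\begin{equation*}
\lambda^2\mathcal{T}_nM\cong \mathcal{T}_nM/\Ext^n_R(M,R)\cong P_n^*/\ker(d^n)\cong \Im(d^n)\subseteq P_{n+1}^*,
\end{equation*}
which realizes $\lambda^2\mathcal{T}_nM$ as the kernel of the surjection $P_{n+1}^*\twoheadrightarrow \mathcal{T}_{n+1}M$; here $\mathcal{T}_{n+1}M\in G(R)$ because $\Omega^nM\in G(R)$. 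Combining $\Tor^R_1(\mathcal{T}_{n+1}M,N)=0$ with the tensor-evaluation isomorphism $P_j^*\otimes_RN\cong \Hom_R(P_j,N)$, tensoring (\ref{n}) with $N$ exhibits $\Ext^n_R(M,R)\otimes_RN$ as the kernel of the map from $\coker(\Hom_R(P_{n-1},N)\to \Hom_R(P_n,N))$ to $\Im(\Hom_R(P_n,N)\to \Hom_R(P_{n+1},N))$ induced by $d^n$, which is precisely $\Ext^n_R(M,N)$. The dual argument with $\Hom_R(-,N)$ (using $\Hom_R(P_j^*,N)\cong P_j\otimes_RN$ and $\Ext^1_R(\mathcal{T}_{n+1}M,N)=0$) identifies $\Hom_R(\Ext^n_R(M,R),N)$ with the $n$-th homology $\ker(P_n\otimes_RN\to P_{n-1}\otimes_RN)/\Im(P_{n+1}\otimes_RN\to P_n\otimes_RN)=\Tor^R_n(M,N)$, settling part~(ii) at $i=0$.
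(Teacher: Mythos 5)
Your overall route is the same as the paper's: both proofs rest on the exact sequence (\ref{n}), on the observation that $\lambda^2\mathcal{T}_nM$ is stably isomorphic to $\Omega\mathcal{T}_{n+1}M$ with $\mathcal{T}_{n+1}M\in G(R)$, on the vanishing of $\Tor^R_i(-,N)$ and $\Ext^i_R(-,N)$ on $G$-dimension zero modules for $i>0$, and finally on Lemma \ref{p2} applied to $\mathcal{T}_nM$. Your treatment of the case $i=0$ is a correct hands-on unwinding of the two exact sequences from \cite[Theorem 2.8]{AB} that the paper simply quotes, so that part only differs in presentation.

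The one genuine gap is in your justification of the key vanishing statement $\Tor^R_i(X,N)=0=\Ext^i_R(X,N)$ for $X\in G(R)$ and $i\geq 1$. Writing $X$ (up to projectives) as $\Omega^kY_k$ with $Y_k\in G(R)$ and $k$ arbitrarily large gives $\Tor^R_i(X,N)\cong\Tor^R_{i+k}(Y_k,N)$ and $\Ext^i_R(X,N)\cong\Ext^{i+k}_R(Y_k,N)$; this kills $\Tor$ when $N$ has finite flat dimension and kills $\Ext$ when $N$ has finite injective dimension, but it says nothing about the two crossed cases --- $\Tor$ against an $N$ of finite injective but possibly infinite flat dimension, and $\Ext$ against an $N$ of finite flat but possibly infinite injective dimension --- since high $\Tor$'s (resp.\ $\Ext$'s) of an arbitrary module need not vanish against such an $N$. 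Both crossed cases are needed for the proposition as stated (for instance part (i) when $N$ has finite injective dimension). They do hold for $X\in G(R)$, but the proof requires more than ``$X$ is a high syzygy'': one uses the totally acyclic complete resolution of $X$ together with Hom-tensor evaluation (or Matlis duality) to transfer exactness between the two functors. This is precisely the content of \cite[Theorem 2.10 and Theorem 2.11]{Y}, which the paper invokes at this point; replacing your dimension-shifting sketch by that citation, or by the complete-resolution argument, closes the gap.
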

\begin{proof}
As $\gd_R(M)=n$, one has $\gd_R(\Omega^nM)=0$ and so $\gd_R(\mathcal{T}_{n+1}M)=0$.
Now, by \cite[Theorem 2.10 and Theorem 2.11]{Y},
we have $\Ext^i_R(\mathcal{T}_{n+1}M,N)=0=\Tor_i^R(\mathcal{T}_{n+1}M,N)$ for all $i>0$.
Therefore, from the exact sequences
$$0\rightarrow\Ext^1_R(\mathcal{T}_{n+1}M,N)\rightarrow\Tor_n^R(M,N)\rightarrow\Hom_R(\Ext^n_R(M,R),N)
\rightarrow\Ext^2_R(\mathcal{T}_{n+1}M,N)$$
and
$$\Tor_2^R(\mathcal{T}_{n+1}M,N)\rightarrow\Ext^n_R(M,R)\otimes_RN\rightarrow\Ext^n_R(M,N)\rightarrow
\Tor_1^R(\mathcal{T}_{n+1}M,N)\rightarrow0,$$ of \cite[Theorem
2.8]{AB}, we get $\Tor_n^R(M,N)\cong\Hom_R(\Ext^n_R(M,R),N)$ and
$\Ext^n_R(M,R)\otimes_RN\cong\Ext^n_R(M,N)$, which prove the claims
(i) and (ii) for $i=0$. For $i>0$, we consider the exact sequence
$0\rightarrow \Ext^n_R(M,R)\rightarrow \mathcal{T}_nM\rightarrow
\lambda^2\mathcal{T}_nM\rightarrow 0.$ Note that
$\underline{\lambda^2\mathcal{T}_nM}\cong\underline{\Omega\mathcal{T}_{n+1}M}$,
and so $\gd_R(\lambda^2\mathcal{T}_nM)=0$.
 Therefore $\Ext^i_R(\Ext^n_R(M,R),N)\cong\Ext^i_R(\mathcal{T}_nM,N)$ and also
$\Tor_i^R(\Ext^n_R(M,R),N)\cong\Tor_i^R(\mathcal{T}_nM,N)$ for all $i>0$.
Now the assertion is obvious by Lemma \ref{p2}.
 \end{proof}

\section{linkage of modules and local cohomology}

Let $\fa$ and $\fb$ be ideals in a Gorenstein local ring $R$ which are linked by a Gorenstein ideal $\fc$. In \cite[Theorem 4.1]{Sc},
 Schenzel proves
that the Serre condition $(S_r)$ for $R/{\fa}$ is equivalent to the vanishing
of the local cohomology groups $\H^i_{\fm}(R/{\fb})= 0$ for all $i$, $\dim(R/{\fb})-r<i<\dim(R/{\fb})$. Here we extend this result
for any
horizontally linked module of finite \emph{G}-dimension over a more general ground ring, i.e. over a Cohen-Macaulay local ring.
First we bring the following lemma which is clear if the ground ring is Gorenstein by using the Local Duality Theorem.
\begin{lem}\label{l1}
Let $R$ be a Cohen-Macaulay local ring of dimension $d$ and let $M$ be an $R$--module of dimension $d$ which is not maximal
Cohen-Macaulay.
If \emph{$\gd_R(\lambda M)<\infty$ then $$\sup\{i\mid\H^i_{\fm}(M)\neq0, i\neq d\}=d-\rgr(M).$$}
\end{lem}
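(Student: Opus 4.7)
The strategy is to combine local duality with a hyper-Tor spectral sequence collapse driven by the finiteness of Gorenstein dimension. First, the hypothesis propagates: from (\ref{1}) with $P_1^*$ projective and $\gd_R(\lambda M)<\infty$ we get $\gd_R(\Tr M)<\infty$; since the $G$-class is closed under $\Tr$ and $\underline{\Tr\Tr M}\cong\underline{M}$, we also obtain $\gd_R(M)<\infty$, while (\ref{2}) further gives $\gd_R(M^*)<\infty$. As $M$ is not maximal Cohen--Macaulay, Theorem \ref{Gor}(ii) forces $\gd_R(M)>0$, hence $n:=\rgr(M)$ is a positive finite integer. Passing to the completion $\hat R$ (which preserves local cohomology, Gorenstein dimension, reduced grade, and $\dim$ of finite modules), I may assume $R$ admits a dualizing module $\omega$; then by Local Duality, $\H^i_\fm(M)^{\vee}\cong\Ext^{d-i}_R(M,\omega)$, so the lemma is equivalent to showing $\inf\{j>0\mid\Ext^j_R(M,\omega)\neq 0\}=n$.

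I would deduce this from the stronger identity $\Ext^j_R(M,\omega)\cong\Ext^j_R(M,R)\otimes_R\omega$ for every $j\geq 0$. Starting from a free resolution $P_\bullet\to M$, tensor-evaluation for finite projectives gives $\Hom_R(P_\bullet,\omega)\cong\Hom_R(P_\bullet,R)\otimes_R\omega$, hence $\uhom_R(M,\omega)\simeq\uhom_R(M,R)\utp\omega$. The associated hyper-Tor spectral sequence
\[
E_2^{p,q}=\Tor^R_p(\Ext^q_R(M,R),\omega)\Rightarrow\Ext^{q-p}_R(M,\omega)
\]
collapses onto the column $p=0$ as soon as $\Tor^R_p(\Ext^q_R(M,R),\omega)=0$ for all $p>0$ and $q\geq 0$.

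To establish this vanishing, I use the fact (due to Foxby) that any finitely generated $R$-module of finite Gorenstein dimension lies in the Auslander class of $\omega$, and in particular has $\Tor^R_i(-,\omega)=0$ for $i>0$. It remains to check $\gd_R(\Ext^q_R(M,R))<\infty$ for every $q\geq 0$. The case $q=0$ is $M^*$, already settled. For $q\geq 1$, invoke the exact sequence (\ref{e}) of Remark \ref{R},
\[
0\longrightarrow\Ext^q_R(M,R)\longrightarrow\mathcal{T}_qM\longrightarrow\lambda^2\mathcal{T}_qM\longrightarrow 0;
\]
since $\gd_R(M)<\infty$, the module $\mathcal{T}_qM=\Tr\Omega^{q-1}M$ has finite $G$-dimension, and because $\underline{\lambda^2\mathcal{T}_qM}\cong\underline{\Omega\mathcal{T}_{q+1}M}$ so does $\lambda^2\mathcal{T}_qM$. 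The two-out-of-three property for finite Gorenstein dimension in short exact sequences then yields $\gd_R(\Ext^q_R(M,R))<\infty$.

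With the spectral sequence collapsed, $\Ext^j_R(M,\omega)\cong\Ext^j_R(M,R)\otimes_R\omega$, and over the local ring $R$ Nakayama's lemma (applied via $\omega/\fm\omega\neq 0$) gives $N\otimes_R\omega=0\iff N=0$ for any finite $N$. Hence the vanishing of $\Ext^j_R(M,\omega)$ matches that of $\Ext^j_R(M,R)$, delivering the infimum equal to $n=\rgr(M)$. The main obstacle is the verification that every $\Ext^q_R(M,R)$ has finite Gorenstein dimension; once this is secured through sequence (\ref{e}) and the hypothesis $\gd_R(\lambda M)<\infty$, the remainder of the argument is formal bookkeeping via the spectral sequence and local duality.
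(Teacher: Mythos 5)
Your reduction via completion and local duality to the statement $\inf\{j>0\mid\Ext^j_R(M,\omega)\neq 0\}=\rgr(M)$ is fine, and the tensor--evaluation quasi-isomorphism $\uhom_R(M,\omega)\simeq\uhom_R(M,R)\utp_R\omega$ is correct. The proof breaks at the collapse of the spectral sequence. That collapse requires $\Tor^R_{p}(\Ext^q_R(M,R),\omega)=0$ for all $p>0$ and \emph{all} $q$ (the terms with $q\geq\rgr(M)$ and $p=q-j>0$ are precisely the ones that could contaminate $\Ext^j_R(M,\omega)$ for $j\leq\rgr(M)$), and you derive this from $\gd_R(\Ext^q_R(M,R))<\infty$ for every $q$, which in turn rests on the assertion that $\mathcal{T}_qM=\Tr\Omega^{q-1}M$ has finite $G$-dimension because $\gd_R(M)<\infty$. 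Two things go wrong here. First, $\gd_R(M)<\infty$ is itself not a consequence of the hypotheses: the closure of the $G$-class under $\Tr$ (\cite[Lemma 4.9]{AB}) is a statement about $G$-dimension \emph{zero}; finiteness of $G$-dimension is not preserved by $\Tr$. Second, and fatally, the same false principle is what you need for $\mathcal{T}_qM$. A concrete counterexample: over $R=k[[x,y,z]]/(x^2,xy,y^2)$ (Cohen--Macaulay of dimension one, non-Gorenstein, whose only totally reflexive modules are the free ones), the module $M=\coker\bigl(\begin{smallmatrix}z\\ x\end{smallmatrix}\bigr)$ has $\pd_RM=1$, hence $\gd_RM=1$, while $\Tr M\cong\Ext^1_R(M,R)\cong R/(x,z)$ has infinite $G$-dimension since its syzygy $(x,z)$ is not free. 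So neither $\gd_R(M)<\infty$ nor $\gd_R(\Ext^q_R(M,R))<\infty$ is available, and the $E_2$-page does not vanish off the column $p=0$ for the reasons you give.

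What the hypothesis $\gd_R(\lambda M)<\infty$ actually buys is much less: since $\underline{\Tr M}\cong\underline{\Omega^{q-1}\mathcal{T}_qM}$ only for $q\leq n:=\rgr(M)$ (this identification uses $\Ext^i_R(M,R)=0$ for $0<i<q$), you get $\gd_R(\mathcal{T}_qM)<\infty$ only in that range, and nothing about $\Ext^q_R(M,R)$ for $q>n$, nor even about $\Ext^n_R(M,R)$ itself (its cokernel in the sequence (\ref{e}) is $\Omega\mathcal{T}_{n+1}M$ up to projectives, which is again out of reach). The paper's proof is built around exactly this limitation: in place of your full hyper-Tor spectral sequence it uses the truncated complex of Lemma \ref{p2}(i), which yields $\Ext^{n-i}_R(M,\omega)\cong\Tor^R_i(\mathcal{T}_nM,\omega)$ for $1\leq i\leq n-1$ with the \emph{single} controlled module $\mathcal{T}_nM$, giving the vanishing $\Ext^j_R(M,\omega)=0$ for $0<j<n$; the reverse inequality (nonvanishing at $j=n$) is then obtained not by a Nakayama argument on $\Ext^n_R(M,R)\otimes_R\omega$ but by showing that further vanishing would force $\Tr M\otimes_R\omega$, hence $\Tr M$, to satisfy $\widetilde{S}_{d-t-1}$ and thus $\Ext^i_R(M,R)=0$ up to degree $d-t-1$, contradicting $\rgr(M)=n$. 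You would need to replace your collapse argument by something of this truncated form to close the gap.
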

\begin{proof}
We may assume that $R$ is complete with canonical module $\omega_R$. Set
$t=\sup\{i\mid\H^i_{\fm}(M)\neq0, i\neq d\}$ and $n=\rgr(M)$. Note that $n\leq d$. Otherwise, $n>d$ implies
$\underline{\Tr M}\cong\underline{\Omega^d\mathcal{T}_{d+1}M}$ by (\ref{e}) and so
$\gd_R(\Tr M)=0$ by Theorem \ref{Gor} (ii). Therefore $\gd_R(M)=0$ which is a contradiction.
Note that, by Lemma \ref{p2} (i), we have
\begin{equation}\label{41}
\Ext^{n-i}_R(M,\omega_R)\cong\Tor_i^R(\mathcal{T}_nM,\omega_R)  \text{ for } 1\leq i\leq n-1.
\end{equation}
Since $\underline{\Omega^n\mathcal{T}_nM}\cong\underline{\lambda
M}$, it follows $\gd_R(\mathcal{T}_nM)<\infty$. By \cite[Proposition
2.5]{F}, we have $\Tor_i^R(\mathcal{T}_nM,\omega_R)=0$ for all
$i>0$. Now, the Local Duality Theorem and (\ref{41}) imply that
$t\leq d-n$. If $t=d-1$ then we have nothing to prove. Assume
that $t<d-1$. Let $P_\bullet: \cdots\rightarrow P_1\rightarrow
P_0\rightarrow 0$ be a minimal projective resolution of $M$. We
have the commutative diagram
$$\begin{CD}
&&&&&&&&\\
  \ \ &&&&  P^*_0\otimes_R\omega_R @>>>P^*_1\otimes_R\omega_R @>>> \Tr M\otimes_R\omega_R @>>>0&  \\
  &&&& @VV{\cong}V @VV{\cong}V \\
  \ \  &&&& P^{\triangledown}_0 @>f>> P^{\triangledown}_1
   @>>>\coker(f) @>>>0&\\
\end{CD}$$\\
with exact rows, where $P^{\triangledown}\cong\Hom_R(P,\omega_R)$. Therefore $\Tr M\otimes_R\omega_R\cong \coker(f)$.
On the other hand, by the Local Duality Theorem, we have $\Ext^i_R(M,\omega_R)=0$ for all $i$, $1\leq i<d-t$, from which we obtain
the exact sequence
$0\rightarrow \coker(f)\rightarrow P^{\triangledown}_2\rightarrow\cdots\rightarrow P^{\triangledown}_{d-t}$. Now it is
straightforward to see that
$\coker(f)$, and so $\Tr M\otimes_R\omega_R$ satisfies $\widetilde{S}_{d-t-1}$.

Since $\gd_R(\Tr M)<\infty$, it follows, by \cite[Theorem 2.13]{Y} and \cite[Proposition 2.5]{F}, that
$\Tr M$ satisfies $\widetilde{S}_{d-t-1}$. As $\underline{\Tr\Tr M}\cong\underline{M}$,
$\Ext^i_R(M,R)=0$ for $1\leq i\leq d-t-1$ by Theorem \ref{AS}. Hence $\rgr(M)=d-t$.
\end{proof}
Now we are able to generalize \cite[Theorem 4.1]{Sc} for modules of finite Gorenstein dimension.
\begin{thm}
Let $R$ be a Cohen-Macaulay local ring of dimension d, and let $M$ be a horizontally linked $R$--module of finite G-dimension.
Let $k$ be a non-negative integer. Then
$M$ satisfies the Serre condition $(S_k)$ if and only if \emph{$\H^i_{\fm}(\lambda M)=0$} for all $i$, $d-k+1\leq i\leq d-1$.
\end{thm}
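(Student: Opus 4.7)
The plan is to translate both sides of the biconditional into statements about $\rgr(\lambda M)$ and then match them up using Lemma \ref{l1}.

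First I would reduce the Serre condition on $M$ to a statement about $\lambda M$. Since $R$ is Cohen--Macaulay local and $M$ is horizontally linked, the Serre condition $(S_k)$ on $M$ coincides with the condition $\widetilde{S}_k$ (as noted in the paragraph preceding Theorem \ref{AS}). By Corollary \ref{A}(i), $M$ satisfies $\widetilde{S}_k$ if and only if $\rgr(\lambda M)\geq k$. Therefore the theorem is equivalent to showing
\[
\rgr(\lambda M)\geq k \ \Longleftrightarrow\ \H^i_{\fm}(\lambda M)=0 \ \text{for all}\ i,\ d-k+1\leq i\leq d-1.
\]
The case $k=0$ is vacuous on both sides, so I may assume $k\geq 1$.

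Next I would dispose of the trivial case $\gd_R(M)=0$. Here $M\in G(R)$, and since $M$ is stable (by horizontal linkage), $\lambda M\in G(R)$ as well. By the Auslander--Bridger formula both $M$ and $\lambda M$ are maximal Cohen--Macaulay, so $(S_k)$ is automatic for $M$ and $\H^i_{\fm}(\lambda M)=0$ for all $i<d$; equivalently, $\rgr(\lambda M)=\infty\geq k$. Moreover, this is the only degenerate situation: by Corollary \ref{A}(ii), whenever $\gd_R(M)>0$ one has $\rgr(\lambda M)<\infty$, hence $\gd_R(\lambda M)>0$ and $\lambda M$ is not maximal Cohen--Macaulay.

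For the main case $\gd_R(M)>0$, I would apply Lemma \ref{l1} to the module $\lambda M$ in place of $M$. The hypotheses check out as follows: $\lambda M$ is a non-zero first syzygy, so $\Ass_R(\lambda M)\subseteq\Ass R$, and since $R$ is Cohen--Macaulay hence unmixed, every $\fp\in\Ass R$ satisfies $\dim R/\fp=d$, so $\dim_R(\lambda M)=d$; horizontal linkage gives $\gd_R(\lambda(\lambda M))=\gd_R(M)<\infty$; and $\lambda M$ is not maximal Cohen--Macaulay by the previous paragraph. Lemma \ref{l1} then yields
\[
\sup\{i\mid \H^i_{\fm}(\lambda M)\neq 0,\ i\neq d\}\ =\ d-\rgr(\lambda M),
\]
which simultaneously tells us $\H^i_{\fm}(\lambda M)=0$ for $d-\rgr(\lambda M)<i<d$ and $\H^{d-\rgr(\lambda M)}_{\fm}(\lambda M)\neq 0$. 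Consequently $\H^i_{\fm}(\lambda M)=0$ for all $i$ with $d-k+1\leq i\leq d-1$ if and only if $d-\rgr(\lambda M)\leq d-k$, i.e. $\rgr(\lambda M)\geq k$. Combined with the first paragraph, this yields the theorem.

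The main obstacle is verifying the hypotheses of Lemma \ref{l1} for $\lambda M$, specifically that $\dim_R(\lambda M)=d$ (which relies on the unmixedness of the Cohen--Macaulay local ring $R$ together with $\lambda M$ being a syzygy) and that $\lambda M$ is not maximal Cohen--Macaulay (which requires translating $\gd_R(M)>0$ into $\gd_R(\lambda M)>0$ via Corollary \ref{A}(ii)). Once these are in place the proof reduces to bookkeeping between the indices in Lemma \ref{l1} and the stated range $d-k+1\leq i\leq d-1$.
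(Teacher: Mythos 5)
Your proposal is correct and follows essentially the same route as the paper: reduce $(S_k)$ to $\widetilde{S}_k$, invoke Corollary \ref{A} to translate this into $\rgr(\lambda M)\geq k$, and then apply Lemma \ref{l1} with $\lambda M$ in place of $M$. You are in fact more careful than the paper in checking the hypotheses of Lemma \ref{l1} (that $\dim_R(\lambda M)=d$ and that $\lambda M$ is not maximal Cohen--Macaulay when $\gd_R(M)>0$), which the paper leaves implicit.
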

\begin{proof}
As $\Ass_R(M)\subseteqq\Ass R$, it is easy to see that $M$ satisfies $(S_k)$ if and only if $M$ satisfies the condition $\widetilde{S}_k$ which is also equivalent to say $\rgr(\lambda M)\geq k$, by
Corollary \ref{A}. Now the claim follows from the proof of Lemma \ref{41}, by replacing $M$ by $\lambda M$.
\end{proof}
\begin{prop}$($Compare with \cite[Corollary 3.2]{DGHS}$)$
Let $(R,\fm)$ be a local ring with \emph{$\depth R>0$} and let $M$ be a stable $R$--module. Assume that $M$ is of
\emph{G}-dimension zero on the
punctured spectrum of $R$. Then the following statements hold true.
\begin{itemize}
        \item[(i)]{$M$ is horizontally linked if and only if $\Gamma_{\fm}(M)=0$.}
         \item[(ii)] If $M$ is horizontally linked module of finite and positive G-dimension, then

        \emph{$\rgr(M)=\depth_R(\lambda M)\ \text{and}\ \rgr(\lambda M)=\depth_R(M)$}.
\end{itemize}
\end{prop}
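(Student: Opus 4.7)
The plan for (i) is to combine Theorem \ref{MS} with the identification $\Gamma_\fm(M)=\Ext^1_R(\Tr M,R)$. Since $M_\fp\in G(R_\fp)$ for every $\fp\ne\fm$, the module $\Ext^i_R(\Tr M,R)$ is supported only at $\fm$, hence is $\fm$-torsion, for every $i>0$. Exact sequence (\ref{m}) then shows $\ker(e_M)=\Ext^1_R(\Tr M,R)$ is $\fm$-torsion and therefore sits inside $\Gamma_\fm(M)$. For the reverse inclusion I would observe that $M^{**}=(M^*)^*$ is automatically a first syzygy: dualizing a finite presentation $F_1\to F_0\to M^*\to 0$ produces $0\to M^{**}\to F_0^*\to F_1^*$, so $M^{**}$ embeds in the free module $F_0^*$. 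Because $\depth R>0$, this forces $\Gamma_\fm(M^{**})=0$. Applying $\Gamma_\fm$ to the short exact sequence $0\to\ker(e_M)\to M\to\im(e_M)\to 0$ and using $\Gamma_\fm(\im e_M)\subseteq\Gamma_\fm(M^{**})=0$ gives $\Gamma_\fm(M)=\Gamma_\fm(\ker e_M)=\ker e_M$. Hence $\Gamma_\fm(M)=\Ext^1_R(\Tr M,R)$, and Theorem \ref{MS}, applied to the stable module $M$, completes (i).

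For (ii) the hypothesis localizes everything to $\fm$. Since $\gd_R(M)>0$ and $M_\fp\in G(R_\fp)$ for every $\fp\ne\fm$, the set $X(M)=\{\fp\in\Spec R:\gd_{R_\fp}(M_\fp)\ne 0\}$ equals $\{\fm\}$. Proposition \ref{p} then gives
$$\rgr(M)=\inf\{\depth_{R_\fp}((\lambda M)_\fp):\fp\in X(M)\}=\depth_R(\lambda M),$$
which is the first equality. For the second, I would invoke Theorem \ref{t}: its condition (iii) requires $\depth_R(M)\le\depth_{R_\fp}(M_\fp)$ for every $\fp\in X(M)$, which is trivially satisfied when $X(M)=\{\fm\}$; the implication (iii)$\Rightarrow$(i) of Theorem \ref{t} then delivers $\rgr(\lambda M)=\depth_R(M)$.

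The main subtlety lies in part (i): pinning down the kernel of the evaluation map in terms of $\Gamma_\fm$. The crucial observation is that $M^{**}$, although not obviously a syzygy of $M$ itself, is always a first syzygy just because it is a dual module; combined with $\depth R>0$, this kills its $\fm$-torsion and makes the $\Gamma_\fm$ long exact sequence pin down $\ker(e_M)$. Once that identification is in hand, part (ii) is an immediate reading-off from Proposition \ref{p} and Theorem \ref{t}, with $X(M)=\{\fm\}$ collapsing the infimum in one case and trivializing the depth hypothesis in the other.
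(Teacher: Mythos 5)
Your argument is correct and, for part (i) and the first equality in (ii), it is essentially the paper's own proof. In (i) the paper works with the sequence (\ref{e1}), $0\to\Ext^1_R(\Tr M,R)\to M\to\lambda^2M\to0$, and uses that $\lambda^2M$ is a syzygy together with $\fm\notin\Ass R$ to identify $\Ext^1_R(\Tr M,R)$ with $\Gamma_\fm(M)$; you use (\ref{m}) and the observation that $M^{**}$ embeds in a free module, which is the same mechanism (the key points in both are that $\Ext^1_R(\Tr M,R)$ is supported only at $\fm$ and that the target of the map out of $M$ has no $\fm$-torsion because $\depth R>0$). For the first equality in (ii) the paper argues directly from Lemma \ref{l3} via $\Ass_R(\Ext^{\tiny{\rgr(M)}}_R(M,R))=\{\fm\}$, while you quote Proposition \ref{p} with $X(M)=\{\fm\}$; since Proposition \ref{p} is itself a packaging of Lemma \ref{l3}, this is the same computation. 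The one genuine divergence is the second equality $\rgr(\lambda M)=\depth_R(M)$: the paper leaves it implicit (it follows by running the same argument on $\lambda M$, which is again stable, horizontally linked, of finite positive $G$-dimension and of $G$-dimension zero on the punctured spectrum), whereas you obtain it from the equivalence (iii)$\Leftrightarrow$(i) of Theorem \ref{t}, noting that condition (iii) is vacuous when $X(M)=\{\fm\}$. Your route has the small advantage of not requiring one to check that $\lambda M$ inherits all the hypotheses; the symmetry route has the advantage of not invoking the heavier Theorem \ref{t}. Both are valid.
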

\begin{proof}
(i) As $M$ is locally G-dimension zero on the punctured spectrum and $\underline{(\Tr M)_\fp}\cong\underline{\Tr_{R_\fp}M_\fp}$ for
all $\fp\in\Spec R$, it follows that $\Tr M$ is also of locally G-dimension zero on the punctured spectrum and so $\Supp_R(\Ext^1_R(\Tr
M,R))\subseteqq\{\fm\}$. Consider the exact sequence (\ref{e1}),
$0\rightarrow\Ext^1_R(\Tr M,R)\rightarrow M\rightarrow \lambda^2 M\rightarrow 0$.
As $\lambda^2M$ is a syzygy module and $\fm\not\in\Ass R$, we get
$\Ext^1_R(\Tr M,R)\cong\Gamma_{\fm}(\Ext^1_R(\Tr M,R))\cong\Gamma_{\fm}(M)$. Now the assertion is obvious by Theorem \ref{MS}.

(ii) Set $\rgr(M)=t$. As $t>0$ and $M$ is of G-dimension zero on the punctured spectrum, it follows that $\Ass_R(\Ext^t_R(M,R))=\{\fm\}$.
Now Lemma \ref{l3} implies that $\depth_R(\lambda M)=\rgr(M)$.
\end{proof}
In \cite[Corollary 3.4]{DGHS}, it is shown that $\Ext^i_R(M,R)\cong\H^i_{\fm}(\lambda M)$ for all $i$, $1\leq i< \dim R$ whenever $R$ is
 Cohen-Macaulay with canonical module $\omega_R$, $\Tor_i^R(M,\omega_R)=0$ for all $i>0$ and $\Ext^i_R(M,R)$ is of finite length
 for all $i$, $1\leq i< \dim R$. In the following, we conclude the same result without assuming that $R$ is Cohen-Macaulay and without
 condition on torsion modules of the canonical module.
\begin{thm}
Let $R$ be a local ring with \emph{$\depth R\geq2$} and let $M$ be an $R$--module. Assume that $n$ is an integer such that
\emph{$1< n\leq\depth R$} and that \emph{$\Ext^i_R(M,R)$} is of finite length
for all $i$, $1\leq i<n$.
Then \emph{$\Ext^i_R(M,R)\cong\H^i_{\fm}(\lambda M)$} for all $i$, $1\leq i<n$.
\end{thm}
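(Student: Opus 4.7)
The plan is to compute $\H^i_\fm(\lambda M)$ by ``climbing'' the dualized minimal projective resolution of $M$ one step at a time, using the finite-length hypothesis to trade ``$\Ext^k$'' for ``$\H^1_\fm$'' and $\depth R\geq 2$ to strip off the free terms. Starting from a minimal projective resolution $\cdots\to P_2\to P_1\to P_0\to M\to 0$, I would dualize and extend (\ref{n1}) to the (generally non-exact) complex
\begin{equation*}
0\to M^*\to P_0^*\to P_1^*\to P_2^*\to\cdots .
\end{equation*}
Set $I_k=\im(P_{k-1}^*\to P_k^*)$ and $K_k=\Ker(P_k^*\to P_{k+1}^*)$, so that $I_1=\lambda M$, $K_k/I_k\cong\Ext^k_R(M,R)$ for $k\geq 1$, and one has two families of short exact sequences
\begin{equation*}
(A_k)\quad 0\to K_k\to P_k^*\to I_{k+1}\to 0,\qquad (B_k)\quad 0\to I_k\to K_k\to \Ext^k_R(M,R)\to 0.
\end{equation*}

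Because $P_k^*$ is free and $\depth R\geq 2$, one has $\H^j_\fm(P_k^*)=0$ for all $j<\depth R$; moreover $I_{k+1}\hookrightarrow P_{k+1}^*$ forces $\Gamma_\fm(I_{k+1})=0$. Feeding $(A_k)$ into its long exact sequence then yields $\Gamma_\fm(K_k)=0=\H^1_\fm(K_k)$ and, more generally,
\begin{equation*}
\H^j_\fm(K_k)\cong \H^{j-1}_\fm(I_{k+1})\quad\text{for every}\ 1\leq j<\depth R.
\end{equation*}
Next, for $1\leq k<n$ the finite-length hypothesis makes $\Ext^k_R(M,R)$ an $\fm$-torsion module with $\H^j_\fm(\Ext^k_R(M,R))=0$ for $j\geq 1$. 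Feeding $(B_k)$ into its long exact sequence and combining with the vanishings just obtained produces the key identification
\begin{equation*}
\Ext^k_R(M,R)\cong \H^1_\fm(I_k),
\end{equation*}
together with $\H^j_\fm(I_k)\cong \H^j_\fm(K_k)$ for $j\geq 2$.

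Composing the two resulting isomorphisms gives the recursion $\H^j_\fm(I_k)\cong \H^{j-1}_\fm(I_{k+1})$ for $2\leq j<\depth R$ and $1\leq k<n$. Starting at $k=1$ with $I_1=\lambda M$ and iterating $i-1$ times along the diagonal produces
\begin{equation*}
\H^i_\fm(\lambda M)\cong \H^{i-1}_\fm(I_2)\cong\cdots\cong \H^1_\fm(I_i)\cong \Ext^i_R(M,R)\qquad\text{for}\ 1\leq i<n.
\end{equation*}
The main obstacle I anticipate is purely index bookkeeping: ensuring that at every step both the bound $j<\depth R$ (needed to kill the projective terms) and the bound $k<n$ (needed to invoke finite length of $\Ext^k$) remain in range. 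The hypothesis $1<n\leq\depth R$ is precisely what makes this possible, since the intermediate indices $(j,k)=(i,1),(i-1,2),\ldots,(2,i-1),(1,i)$ all satisfy $k\leq i\leq n-1$ and $j\leq i<\depth R$.
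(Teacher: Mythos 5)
Your proof is correct, and it is essentially the same argument as the paper's: the paper climbs the same dualized resolution using the modules $\mathcal{T}_kM$ and $\Omega\mathcal{T}_kM$ (which are, up to projective summands, your $P_k^*/I_k$ and $I_k$) and the short exact sequences $0\to\Ext^k_R(M,R)\to\mathcal{T}_kM\to\lambda^2\mathcal{T}_kM\to0$ together with the syzygy sequences, trading one degree of local cohomology per step exactly as your $(A_k)$/$(B_k)$ recursion does. Your formulation via the kernels $K_k$ and images $I_k$ is just the sub-object version of the paper's quotient-object bookkeeping, and the index constraints you identify match the paper's.
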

\begin{proof}
Consider the exact sequence (\ref{e}),
$0\longrightarrow\Ext^i_R(M,R)\longrightarrow\mathcal{T}_iM\longrightarrow\lambda^2\mathcal{T}_{i}M\longrightarrow 0$.
Note that
$\underline{\lambda^2\mathcal{T}_{i}M}\cong\underline{\Omega\mathcal{T}_{i+1}M}$. Since $\Ext^i_R(M,R)$ is of finite length
for $1\leq i<n$,
applying the functor $\Gamma_{\fm}(-)$ on this exact sequence, we get
\begin{equation}\label{42}
\H^j_{\fm}(\mathcal{T}_{i-1}M)\cong\H^j_{\fm}(\Omega\mathcal{T}_{i}M) \text{ for all}\ i\ \ \text{and}\ j, \text{with}\ 1\leq j<n,\ 1<i\leq n
\end{equation}
 and also
\begin{equation}\label{43}
\Ext^i_R(M,R)=\Gamma_{\fm}(\Ext^i_R(M,R))\cong\Gamma_{\fm}
(\mathcal{T}_iM) \text{ for all}\ i, 1\leq i<n.
\end{equation}
On the other hand,
\begin{equation}\label{44}
\H^j_{\fm}(\mathcal{T}_iM)\cong\H^{j+1}_{\fm}(\Omega\mathcal{T}_iM)
\text{ for all}\ i\ \text{and}\ j,  0\leq j<n-1,\  1\leq i<n.
\end{equation}
Now by using (\ref{43}), (\ref{44}) and (\ref{42}) we obtain the result.
\end{proof}
\section{Semidualizing modules and evenly linked modules}
Let $(R,\fm)$ be a Gorenstein local ring, $\fc_1$ and $\fc_2$ Gorenstein ideals. Let $M_1,M$ and $M_2$ be $R$--modules
such that $M_1$ is linked to $M$ by $\fc_1$ and $M$ is linked to $M_2$ by $\fc_2$. In \cite[Proposition 15 and Proposition 16]{MS},
Martsinkovsky and Strooker prove that $\gd_R(M_1)=\gd_R(M_2)$ and also
\begin{equation}\label{51}
\Ext^i_{R/\fc_1}(M_1,R/\fc_1)\cong\Ext^i_{R/\fc_2}(M_2,R/\fc_2) \text{ for all } i>0.
\end{equation}
In this section we prove (\ref{51}), without assuming $R$ is Gorenstein, but we assume some conditions on the modules $M_1$,
$M$, $M_2$ and on ideals $c_1$, $c_2$.
We also show that the condition $\widetilde{S}_k$(see Definition \ref{S}) is preserved under evenly linkage.\\

Throughout this sectsion $R$ is a local ring, $K$ and $M$ are $R$--modules. Denote $M^\dagger=\Hom_R(M,K)$.
The module $M$ is called \emph{$K$-reflexive} if the canonical map $M\rightarrow M^{\dagger\dagger}$
is bijective.

The Gorenstein dimension was extended to $\gk$dimension by Foxby in \cite{F} and by Golod in \cite{G}.
\begin{defn}
\emph{The module $M$ is said to have \emph{$\gk$dimension zero} if}
\begin{itemize}
            \item[(i)]{\emph{$M$ is $K$-reflexive;}}
             \item[(ii)]{\emph{$\Ext^i_R(M,K)=0$}, for all $i>0$;}
              \item[(iii)]{\emph{$\Ext^i_R(M^{\dagger},K)=0$}, for all $i>0$.}
\end{itemize}
\end{defn}
A $\gk$resolution of a finite $R$--module $M$ is a right acyclic complex of modules of  $\gk$dimensions zero whose $0$th
homology module is $M$. The module $M$ is said to have finite $\gk$dimension, denoted by $\gkd_R(M)$, if
it has a $\gk$resolution of finite length.\\
Recall definition of semidualizing modules and some of its properties.
\begin{defn}
\emph{An $R$--module $K$ is called a \emph{semidualizing} module (suitable), if}
\begin{itemize}
    \item[(i)]{the homothety morphism \emph{$R\rightarrow\Hom_R(K,K)$} is an isomorphism;}
    \item[(ii)]{\emph{$\Ext^i_R(K,K)=0$} for all $i>0$.}
\end{itemize}
\end{defn}
Semidualizing modules are studied in \cite{F} and \cite{G}.
It is obvious that $R$ itself is a semidualizing $R$--module. Also if $R$ is Cohen-Macaulay then its canonical module (if exists)
is a semidualizing module.

Recall that the $\gk$dimension of a module $M$ can be expressed as follows, if it is finite.
\begin{thm}\emph{(\cite[4,8]{G})}\label{G3}
Let $K$ be a semidualizing $R$--module. For an $R$--module $M$ of finite $\gk$dimension the following statements
hold true.
\begin{itemize}
       \item[(i)]{\emph{$\gkd_R(M)=\sup\{i\mid\Ext^i_R(M,K)\neq0, i\geq0\}$,}}
        \item[(ii)]{\emph{$\gkd_R(M)=\depth R-\depth_R(M)$.}}
\end{itemize}
\end{thm}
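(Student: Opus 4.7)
The plan is to mirror the proof of Theorem~\ref{Gor}, replacing $R$ by the semidualizing module $K$ throughout. Three standing facts drive the argument: (a) $\Ext^i_R(X,K)=0$ for all $i>0$ whenever $X$ has $\gk$dimension zero; (b) $\depth_R K=\depth R$, a consequence of $R\cong\Hom_R(K,K)$ together with the $\Ext$-vanishing built into the definition of a semidualizing module; and (c) the class of modules of $\gk$dimension zero is closed under extensions and under kernels of surjections. Using (c), from any $\gk$resolution of $M$ of length $n\geq 1$ one extracts a short exact sequence
\begin{equation*}
0\longrightarrow Y\longrightarrow X\longrightarrow M\longrightarrow 0
\end{equation*}
with $\gkd_R(X)=0$ and $\gkd_R(Y)=n-1$.

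For part (i), the inequality $\sup\{i\mid\Ext^i_R(M,K)\neq 0\}\leq \gkd_R(M)$ follows by dimension shifting along a $\gk$resolution of length $\gkd_R(M)$, using (a). For the reverse inequality, I would induct on $n=\gkd_R(M)$. Applying $\Hom_R(-,K)$ to the sequence above and invoking (a) on $X$ produces isomorphisms $\Ext^{i+1}_R(M,K)\cong\Ext^i_R(Y,K)$ for $i\geq 1$; the induction hypothesis on $Y$ then transports the witness $\Ext^{n-1}_R(Y,K)\neq 0$ to $\Ext^n_R(M,K)\neq 0$ when $n\geq 2$. The case $n=1$ requires a separate argument: were $\Ext^1_R(M,K)$ to vanish, the sequence $0\to M^\dagger\to X^\dagger\to Y^\dagger\to 0$ would be exact with both $X^\dagger$ and $Y^\dagger$ of $\gk$dimension zero, hence $M^\dagger$ would lie in the same class by (c); dualizing once more and invoking the $K$-reflexivity of $X$ and $Y$ would give $M\cong M^{\dagger\dagger}$ of $\gk$dimension zero, contradicting $\gkd_R(M)=1$.

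For part (ii), I would again induct on $n=\gkd_R(M)$. In the base case $n=0$, dualizing a free presentation of $M^\dagger$ and using $\Ext^i_R(M^\dagger,K)=0$ for $i>0$ produces a coresolution $0\to M\to K^{n_0}\to K^{n_1}\to\cdots$; combining (b) with iterated use of the Depth Lemma along this coresolution yields $\depth_R M=\depth_R K=\depth R$. For the inductive step, the Depth Lemma applied to $0\to Y\to X\to M\to 0$, together with $\depth_R X=\depth R$ from the base case and $\depth_R Y=\depth R-(n-1)$ by induction, forces $\depth_R M=\depth R-n$ after a short case analysis.

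The principal technical hurdle is the closure property (c), which underpins the two-out-of-three constructions in both induction arguments, and the base-case identity $\depth_R M=\depth R$ for $M$ of $\gk$dimension zero; both rely in an essential way on the semidualizing conditions on $K$ (the homothety isomorphism and $\Ext$-vanishing) and on the $K$-reflexivity clause in the definition. Once these are in place, the formulas follow from the same bookkeeping used for ordinary $G$-dimension.
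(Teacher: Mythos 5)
The paper does not prove this statement at all --- it is quoted verbatim from Golod's article --- so your argument has to stand on its own. Part (i) is essentially sound: the forward inequality by dimension shifting, the step $\Ext^{i+1}_R(M,K)\cong\Ext^i_R(Y,K)$, and the $n=1$ case via the two dualized sequences and the five lemma all work, modulo your standing fact (c) (the resolving property of the class of $\gk$dimension-zero modules), which is true but is itself a nontrivial lemma you would still have to supply, not merely a "hurdle" to acknowledge.

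Part (ii), however, has a genuine gap at exactly the two places where the Auslander--Bridger formula is classically delicate. In the base case, iterating the Depth Lemma along the coresolution $0\to M\to K^{n_0}\to K^{n_1}\to\cdots$ yields only $\depth_R(M)\geq\depth R$, never the reverse inequality: if $\depth_R(M)>\depth R=s$, the Depth Lemma applied to $0\to M\to K^{n_0}\to C_1\to 0$ merely forces $\depth_R(C_1)=s$, which is no contradiction. In the inductive step with $n=1$ you have $\depth_R(Y)=\depth_R(X)=\depth R$, and the three depth inequalities then give only $\depth_R(M)\geq\depth R-1$; no case analysis excludes $\depth_R(M)=\depth R$, since a split sequence such as $0\to R\to R^2\to R\to 0$ satisfies all three inequalities with all depths equal. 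Both gaps are closed by one additional lemma: if $\gkd_R(M)<\infty$ and $\depth_R(M)\geq\depth R$, then $\gkd_R(M)=0$. Its standard proof reduces modulo a sequence regular simultaneously on $R$, $K$, $M$ and $M^{\dagger}$ (available by prime avoidance once the lower bound is known) and, in the resulting case $\depth R=0$, uses $\Ass_R(K)=\Ass R$ (a consequence of $\Hom_R(K,K)\cong R$) together with adjunction, $\Hom_R(R/\fm,M)\cong\Hom_R(R/\fm\otimes_RM^{\dagger},K)\neq0$, to force $\depth_R(M)=0$. This step uses the reflexivity $M\cong M^{\dagger\dagger}$ in an essential way for the depth computation and is genuinely more than the bookkeeping your proposal describes.
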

We recall the following definitions from \cite{G}.
\begin{defn}
\emph{An $R$--module $M$ is called \emph{$\gk$perfect} if $\gr_R(M)=\gkd_R(M)$. An ideal $I$ is called $\gk$perfect if $R/I$ is
$\gk$perfect as $R$--module. An $R$--module $M$ is called \emph{$\gk$Gorenstein} if $M$ is $\gk$perfect
and $\Ext^n_R(M,K)$ is cyclic, where $n=\gkd_R(M)$. An ideal $I$ is called $\gk$Gorenstein if $R/I$ is $\gk$Gorenstein as $R$--module.}
\end{defn}
Note that if $K$ is a semidualizing $R$--module and $I$ is a $\gk$Gorenstein ideal of $\gk$dimension $n$, then
$\Ext^n_R(R/I,K)\cong R/I$(see \cite[10]{G}).

We frequently use the following results of Golod.
\begin{lem}\label{G1} \cite[Corollary]{G}
Let $I$ be an ideal of $R$. Assume that $K$ is an $R$--module and that $n$ is a fixed integer. If \emph{$\Ext^j_R(R/I,K)=0$} for
all $j\neq n$ then there is an isomorphism of functors \emph{$\Ext^i_{R/I}(-,\Ext^n_R(R/I,K))
\cong\Ext^{n+i}_R(-,K)$} on the category of $R/I$--modules for all $i\geq0$.
\end{lem}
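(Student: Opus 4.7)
The plan is to exploit the classical change-of-rings adjunction together with a Grothendieck-style spectral sequence that collapses under the stated vanishing hypothesis. The starting point is the natural isomorphism
$$\Hom_R(M,K) \;\cong\; \Hom_{R/I}\bigl(M,\Hom_R(R/I,K)\bigr)$$
for every $R/I$--module $M$, which follows from Hom--tensor adjunction once one writes $M \cong R/I \otimes_{R/I} M$. Viewed as a composition of functors, $\Hom_R(M,-)$ on $R$--modules is obtained by first applying $\Hom_R(R/I,-)\colon R\text{-mod}\to R/I\text{-mod}$ and then $\Hom_{R/I}(M,-)$. The key enabling fact is that $\Hom_R(R/I,E)$ is an injective $R/I$--module whenever $E$ is an injective $R$--module, so the first functor carries injective resolutions into $\Hom_{R/I}(M,-)$--acyclic resolutions.

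The main step is then to realize this composition through a double complex. Take a projective resolution $Q_\bullet\to M$ in the category of $R/I$--modules and an injective resolution $K\to E^\bullet$ in the category of $R$--modules, and form the first-quadrant double complex
$$C^{p,q} \;=\; \Hom_R(Q_p,E^q)\;\cong\; \Hom_{R/I}\bigl(Q_p,\Hom_R(R/I,E^q)\bigr).$$
Computing the cohomology of the total complex by filtering along rows first, the injectivity of each $E^q$ over $R$ makes $\Hom_R(-,E^q)$ exact, so only the column $p=0$ survives on the $E_1$ page and the total cohomology equals $\Ext^{\bullet}_R(M,K)$. Filtering the other way, $Q_p$ is projective over $R/I$, so $\Hom_{R/I}(Q_p,-)$ is exact, and the $q$--direction produces $\Hom_{R/I}(Q_p,\Ext^q_R(R/I,K))$. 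Taking horizontal cohomology then yields the spectral sequence
$$E_2^{p,q} \;=\; \Ext^p_{R/I}\bigl(M,\Ext^q_R(R/I,K)\bigr)\;\Longrightarrow\;\Ext^{p+q}_R(M,K).$$

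The final step is an immediate collapse: by hypothesis $\Ext^q_R(R/I,K)=0$ for every $q\neq n$, so $E_2^{p,q}=0$ off the single row $q=n$, whence all differentials are zero and each $E_2^{p,n}$ survives to $E_\infty^{p,n}$. This gives the desired natural isomorphism
$$\Ext^i_{R/I}\bigl(M,\Ext^n_R(R/I,K)\bigr)\;\cong\;\Ext^{n+i}_R(M,K)$$
for all $i\ge 0$, functorially in the $R/I$--module $M$. The main technical point to verify carefully is the $\Hom_{R/I}(M,-)$--acyclicity of $\Hom_R(R/I,E^q)$ (i.e.\ its injectivity over $R/I$); after that, the spectral sequence argument is formal and the degeneration is forced by the hypothesis. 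For a spectral-sequence-free presentation, one can alternatively observe directly that each of the two filtrations of the total complex above computes one of the two sides, and that the hypothesis makes both filtrations degenerate to single rows or columns.
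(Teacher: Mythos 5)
The paper states this lemma purely as a quotation from Golod's article (\cite[Corollary]{G}) and supplies no proof of its own, so there is no internal argument to compare yours against. Your proof is the standard change-of-rings (Grothendieck) spectral sequence argument and is correct: the adjunction $\Hom_R(M,K)\cong\Hom_{R/I}(M,\Hom_R(R/I,K))$, the injectivity of $\Hom_R(R/I,E)$ over $R/I$ for $E$ injective over $R$ (which follows from the same adjunction, since $\Hom_{R/I}(-,\Hom_R(R/I,E))\cong\Hom_R(-,E)$ is exact on $R/I$--modules), and the degeneration of $E_2^{p,q}=\Ext^p_{R/I}(M,\Ext^q_R(R/I,K))$ to the single row $q=n$ under the vanishing hypothesis all check out, and the naturality in $M$ comes for free from the functoriality of the construction. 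This is, in substance, the argument behind Golod's corollary (a generalization of Rees's theorem), so your proposal fills the gap the paper leaves to the reference.
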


\begin{thm}\label{G2}
\cite[Proposition 5]{G}. Let $I$ be a \emph{$\gk$}perfect ideal, and let K be a semidualizing
$R$--module. Set $C=$\emph{$\Ext^{\tiny{\gr(I)}}_R(R/I,K)$}. Then the following statements hold true.
\begin{itemize}
        \item[(i)]{$C$ is a semidualizing $R/I$--module.}
         \item[(ii)]{If $M$ is a $R/I$--module then \emph{$\gkd_R(M)<\infty$} if and only if \emph{$\gc_{R/I}(M)<\infty$}, and
         if these dimensions are finite then
\emph{$\gkd_R(M)=\gr(I)+\gc_{R/I}(M)$.}}
\end{itemize}
\end{thm}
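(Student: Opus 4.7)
The plan is to handle the two parts separately, with part (i) doing most of the work and feeding into part (ii). For part (i), the crucial observation is that $\gk$perfectness of $R/I$ forces $\Ext^j_R(R/I,K)$ to vanish for every $j\neq n$, where $n=\gr(I)$: vanishing for $j>n$ is immediate from Theorem \ref{G3}(i) applied to $R/I$, while vanishing for $j<n$ follows because the grade $\gr_R(R/I)=n$ bounds the smallest $j$ with $\Ext^j_R(R/I,K)\neq 0$ from below whenever $K$ is semidualizing. Once this concentration is in place, Lemma \ref{G1} applies and yields the functorial isomorphism $\Ext^i_{R/I}(-,C)\cong\Ext^{n+i}_R(-,K)$ on $R/I$-modules for all $i\geq 0$.

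To finish (i), I would specialize this with $C$ itself. Choose a $\gk$resolution $0\to G_n\to\cdots\to G_0\to R/I\to 0$ and dualize with $(-)^\dagger=\Hom_R(-,K)$. By the concentration above, the result is an exact sequence
\[
0\to G_0^\dagger\to G_1^\dagger\to\cdots\to G_n^\dagger\to C\to 0,
\]
realizing $C$ with a length-$n$ $\gk$resolution, since each $G_i^\dagger$ has $\gk$dimension zero (duals of modules of $\gk$dimension zero are such). Dualizing once more and invoking the biduality $G_i^{\dagger\dagger}\cong G_i$ returns a complex naturally isomorphic to the original resolution; its cohomology computes $\Ext^\bullet_R(C,K)$ and yields $\Ext^n_R(C,K)\cong R/I$ with all higher $\Ext$ groups vanishing. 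Feeding these through the Lemma \ref{G1} isomorphism gives $\Hom_{R/I}(C,C)\cong R/I$ and $\Ext^i_{R/I}(C,C)=0$ for $i>0$; naturality of the spectral sequence underlying Lemma \ref{G1} identifies the former isomorphism with the homothety, completing the verification that $C$ is semidualizing over $R/I$.

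For part (ii), the same isomorphism $\Ext^i_{R/I}(M,C)\cong\Ext^{n+i}_R(M,K)$ is the main lever, now applied to an arbitrary $R/I$-module $M$, and a parallel spectral-sequence argument yields $\Ext^j_R(M,K)=0$ for $0\leq j<n$. Given finite $\gc_{R/I}(M)=m$, a $G_C$-resolution $0\to X_m\to\cdots\to X_0\to M\to 0$ over $R/I$ can be spliced with a length-$n$ $\gk$resolution of each $X_j$; the latter exist because each $X_j$, having $G_C$-dimension zero over $R/I$, inherits the same single-degree concentration of $\Ext^\bullet_R(X_j,K)$. The spliced resolution shows $\gkd_R(M)\leq n+m$. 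Conversely, finite $\gk$dimension of $M$ gives $\Ext^i_R(M,K)=0$ for $i\gg 0$, which transfers to $\Ext^i_{R/I}(M,C)=0$ for $i\gg 0$; an Auslander--Bridger style construction then produces a finite $G_C$-resolution over $R/I$. With both dimensions finite, the formula $\gkd_R(M)=n+\gc_{R/I}(M)$ drops out of the cohomological characterizations via Theorem \ref{G3}(i) and its $G_C$-analogue, shifted by $n$ through the Lemma \ref{G1} isomorphism.

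The principal obstacle is the homothety identification in part (i): the double-dualization produces a module-isomorphism $\Hom_{R/I}(C,C)\cong R/I$, but one must chase the biduality pairings at the level of complexes to confirm that $\overline r\in R/I$ corresponds to multiplication by $\overline r$ on $C$, which is cleanest when done via an explicit spectral-sequence edge-map computation. A secondary, more routine hurdle is justifying the grade-vs.-$K$-grade inequality that yields $\Ext^j_R(R/I,K)=0$ for $j<n$; it is standard for semidualizing modules but should be invoked explicitly rather than treated as folklore.
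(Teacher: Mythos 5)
The paper never proves this statement: it is imported wholesale as \cite[Proposition 5]{G}, so there is no internal argument to compare yours against, only Golod's original. Judged on its own, your part (i) is essentially the standard (and, I believe, Golod's) argument and is sound: concentration of $\Ext^{\bullet}_R(R/I,K)$ in degree $n$ follows from Theorem \ref{G3}(i) for $j>n$ and, for $j<n$, from the fact that $R$-sequences are $K$-sequences when $K$ is semidualizing (the very fact the paper invokes in Lemma \ref{l2}); Lemma \ref{G1} then applies, and dualizing a length-$n$ $\gk$resolution of $R/I$ twice yields $\Ext^j_R(C,K)=0$ for $j\neq n$ and $\Ext^n_R(C,K)\cong R/I$, whence the semidualizing property of $C$ over $R/I$. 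The homothety identification you flag is a genuine but routine verification.

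Part (ii), however, has a real gap at its central step, in both directions. You produce a length-$n$ $\gk$resolution of each $X_j$ on the grounds that $X_j$ ``inherits the same single-degree concentration of $\Ext^{\bullet}_R(X_j,K)$,'' and in the converse direction you pass from $\Ext^i_{R/I}(M,C)=0$ for $i\gg0$ to a finite $\gcc$resolution by ``an Auslander--Bridger style construction.'' Neither inference is valid as stated: vanishing, or concentration in a single degree, of $\Ext^{\bullet}_R(-,K)$ does not imply finite $\gk$dimension, since the definition also demands $K$-reflexivity and the vanishing of $\Ext^{>0}_R(M^{\dagger},K)$, neither of which is controlled by $\Ext^{\bullet}_R(M,K)$ alone (already for $K=R$ there exist modules with $\Ext^i_R(M,R)=0$ for all $i>0$ but $\gd_R(M)=\infty$). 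The substance of Golod's Proposition 5 is exactly the verification that for $X$ of $\gcc$dimension zero over $R/I$ the $n$-th syzygy $\Omega^n_RX$ satisfies all three conditions of $\gk$dimension zero --- equivalently, that derived $K$-reflexivity over $R$ matches derived $C$-reflexivity over $R/I$ via $\uhom_R(M,K)\simeq\uhom_{R/I}(M,\uhom_R(R/I,K))$. Your sketch uses only the first of the three conditions, so the equivalence of finiteness, and with it the dimension formula, does not yet close.
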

Let $I$ be a perfect ideal (i.e. $R/I$ is
perfect as an $R$--module). Assume that $\gr(I)=n$ and that $\Ext^n_R(R/I,R)$ is cyclic. Let $M$ be a perfect $R$--module
of grade $n$ such that $I\subseteq\Ann_R(M)$. In \cite[Theorem 15]{MS}, Martsinkovsky and Strooker prove that if $M$ is stable
as an $R/I$--module then
$M$ is linked by $I$ and also $\lambda_{R/I}M$ is perfect of grade $n$.
In the following, we give a generalization of this Theorem.
\begin{prop}\label{P3}
Let $K$ be a semidualizing $R$--module. Assume that $I$ is a \emph{$\gk$}Gorenstein ideal of grade $n$ and that $M$ is a
\emph{$\gk$}perfect
$R$--module of grade $n$ such that \emph{$I\subseteq\Ann_R(M)$}. Then $\lambda_{R/I}M$ is a  \emph{$\gk$}perfect $R$--module of grade n.
 Moreover, if $M$ is stable as $R/I$--module then it is horizontally linked as $R/I$--module.
\end{prop}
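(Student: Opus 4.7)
The plan is to transfer everything from $R$ to the quotient $R/I$: the hypothesis that $I$ is $\gk$Gorenstein makes the induced semidualizing $R/I$--module isomorphic to $R/I$ itself, so $\gc_{R/I}(-)$ collapses to ordinary G-dimension over $R/I$, and the statement reduces to classical results together with Theorem \ref{MS} applied over $R/I$.

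First, by Theorem \ref{G2}(i), the module $C:=\Ext^n_R(R/I,K)$ is a semidualizing $R/I$--module. Since $I$ is $\gk$Gorenstein, $C$ is moreover cyclic; any cyclic semidualizing $R/I$--module must be isomorphic to $R/I$, so $\gc_{R/I}(-)$ coincides with the ordinary G-dimension $\gd_{R/I}(-)$ on $R/I$--modules. Next, because $M$ is $\gk$perfect of grade $n$ and $I\subseteq\Ann_R(M)$, Theorem \ref{G2}(ii) applied to $M$ as an $R/I$--module yields $\gc_{R/I}(M)=\gkd_R(M)-\gr(I)=n-n=0$, which by the previous sentence means $\gd_{R/I}(M)=0$.

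From the definition of $G(R/I)$ together with the exact sequences (\ref{n1}) and (\ref{2}) it follows that $\Tr_{R/I}M$, and hence $\lambda_{R/I}M$, also belong to $G(R/I)$; in particular $\gc_{R/I}(\lambda_{R/I}M)=0$. Running Theorem \ref{G2}(ii) in the opposite direction then yields $\gkd_R(\lambda_{R/I}M)=n$. Since $I\subseteq\Ann_R(\lambda_{R/I}M)$ we have $\gr_R(\lambda_{R/I}M)\geq\gr(I)=n$, and combining this with the general inequality $\gr_R\leq\gkd_R$ gives $\gr_R(\lambda_{R/I}M)=n=\gkd_R(\lambda_{R/I}M)$, which is the $\gk$perfectness assertion.

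Finally, suppose $M$ is stable as an $R/I$--module. Since $\gd_{R/I}(M)=0$, one has $\Ext^1_{R/I}(\Tr_{R/I}M,R/I)=0$, so Theorem \ref{MS} applied over the ring $R/I$ shows that $M$ is horizontally linked as an $R/I$--module. The main obstacle in this plan is the identification $C\cong R/I$: without the cyclicity built into the $\gk$Gorenstein hypothesis one would only obtain a $\gcc$ statement over $R/I$ that could not be fed directly into Theorem \ref{MS}; everything else is bookkeeping via Golod's change-of-rings formula.
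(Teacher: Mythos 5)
Your proposal is correct and follows essentially the same route as the paper: identify $\Ext^n_R(R/I,K)\cong R/I$ from the $\gk$Gorenstein hypothesis, apply Golod's change-of-rings result (Theorem \ref{G2}(ii)) to get $\gd_{R/I}(M)=0$, pass to $\Tr_{R/I}M$ and $\lambda_{R/I}M$, run Theorem \ref{G2}(ii) back to conclude $\gkd_R(\lambda_{R/I}M)=n=\gr_R(\lambda_{R/I}M)$, and finish the linkage claim via stability and Theorem \ref{MS}. The only cosmetic difference is that you rederive $\Ext^n_R(R/I,K)\cong R/I$ from the cyclicity of a semidualizing module, whereas the paper simply cites Golod for this fact.
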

\begin{proof}
Since $I$ is a $\gk$Gorenstein ideal, $\Ext^n_R(R/I,K)\cong R/I$ (see \cite[10]{G}). Now by Theorem \ref{G2}(ii), $\gd_{R/I}(M)=0$ so
that $\gd_{R/I}(\Tr_{R/I}M)=0$ (see \cite[Lemma 4.9]{AB}), and hence $\gd_{R/I}(\lambda_{R/I}M)=0$. Again by applying Theorem \ref{G2}(ii)
for $\lambda M$, we conclude that $\gkd_R(\lambda_{R/I}M)=n$. As $I\subseteq\Ann_R(\lambda_{R/I}M)$, we have
$n\leq\gr_R(\lambda_{R/I}M)\leq\gkd_R(\lambda_{R/I} M)$. Hence $\lambda_{R/I}M$ is a $\gk$perfect $R$--module of grade
$n$. Now, if $M$ is stable as $R/I$--module then it is horizontally linked
by \cite[Theorem 1]{MS}.
\end{proof}
Recall that two $R$--modules $M$ and $N$ are said to be in the same even
linkage class, or evenly linked, if there is a chain of even length of linked modules
that starts with $M$ and ends with $N$.

We first bring the following lemma which is a generalization of \cite[Lemma 14]{MS}.
\begin{lem}\label{l2}
Assume that $I$ is an ideal of $R$ and that $M$ is horizontally linked as $R/I$--module.
Assume that $K$ is a semidualizing $R$--module and that $I$ is $\gk$Gorenstein. Then \emph{$\gr_R(M)=\gr(I)$}.
\end{lem}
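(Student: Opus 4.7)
The plan is to combine Golod's change-of-rings isomorphism (Lemma \ref{G1}) with the torsionless property of a horizontally linked module.

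First, set $n = \gr(I)$. Since $I$ is $\gk$Gorenstein of grade $n$, one has $\Ext^n_R(R/I, K) \cong R/I$ and $\Ext^j_R(R/I,K) = 0$ for $j \neq n$. Applying Lemma \ref{G1} on the category of $R/I$--modules therefore yields natural isomorphisms $\Ext^i_{R/I}(-, R/I) \cong \Ext^{n+i}_R(-, K)$ for all $i \geq 0$. Moreover, since any $R/I$--module $N$ has $I \subseteq \Ann_R(N)$ and the $K$-grade of $I$ is $n$, a standard grade computation gives $\Ext^i_R(N, K) = 0$ for all $i < n$.

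Second, because $M$ is horizontally linked as an $R/I$--module, Theorem \ref{MS} gives that $M$ is stable and $\Ext^1_{R/I}(\Tr_{R/I}M, R/I) = 0$. The exact sequence (\ref{m}) applied to $M$ viewed as an $R/I$--module then shows that the biduality map $M \to \Hom_{R/I}(\Hom_{R/I}(M, R/I), R/I)$ is injective, so $M$ is torsionless over $R/I$. Assuming $M \neq 0$ (the zero case being vacuous), this forces $\Hom_{R/I}(M, R/I) \neq 0$; combining with the first paragraph, $\Ext^n_R(M, K) \neq 0$ while $\Ext^i_R(M, K) = 0$ for all $i < n$.

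Finally, to pass from information about $\Ext^i_R(M, K)$ to the classical grade $\gr_R(M) = \inf\{i \mid \Ext^i_R(M, R) \neq 0\}$, I would invoke the grade-preserving property of semidualizing modules: the equality $\depth(R_\fp) = \depth(K_\fp)$ for every $\fp \in \Spec R$ (a consequence of $R \cong \Hom_R(K, K)$) implies $\gr(J; R) = \gr(J; K)$ for every ideal $J \subseteq R$. Applying this with $J = \Ann_R(M)$, I conclude $\gr_R(M) = \inf\{i \mid \Ext^i_R(M, K) \neq 0\} = n = \gr(I)$.

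The main obstacle will be this last step, namely transferring the vanishing pattern of $\Ext^i_R(M, K)$ to that of $\Ext^i_R(M, R)$. It relies on the grade-preservation property of semidualizing modules, which is standard but may require a citation to the literature on semidualizing modules (e.g., Golod or Sather-Wagstaff) or a short verification from the defining identity $R \cong \Hom_R(K, K)$.
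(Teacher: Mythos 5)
Your proposal is correct and follows essentially the same route as the paper: both use Golod's isomorphism (Lemma \ref{G1}) to identify $\Ext^n_R(M,K)$ with $\Hom_{R/I}(M,R/I)$, deduce nonvanishing of the latter from horizontal linkage, and then compare $R$-grade with $K$-grade of $\Ann_R(M)$ via the semidualizing property. The only cosmetic differences are that you get $M^*\neq 0$ from torsionlessness of $M$ rather than from stability of $\lambda_{R/I}M$, and you invoke full grade equality where the paper only needs the inequality $\gr(\Ann_R(M),R)\leq\gr(\Ann_R(M),K)$ together with the trivial bound $\gr_R(M)\geq\gr(I)$.
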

\begin{proof}
Set $n=\gr(I)$. As $I\subseteq\Ann_R(M)$, $\gr_R(M)\geq n$. On the other hand, as $I$ is $\gk$Gorenstein,
$\Ext^n_R(M,K)\cong\Hom_{R/I}(M,R/I)$, by the Lemma \ref{G1}. Since $M$ is horizontally linked $R/I$--module, $\lambda_{R/I} M$
 is stable by \cite[Proposition 4]{MS}.
 Therefore $\Hom_{R/I}(M,R/I)\neq0$ by (\ref{2}). Since $K$ is a semidualizing $R$--module it follows that every $R$-regular
 sequence is also $K$-regular sequence (see \cite[1]{G}). Therefore, by \cite[Proposition 1.2.10]{BH}, we have
\[\begin{array}{rl}
\gr_R(M)&=\gr(\Ann_R(M),R)\\
&\leq\gr(\Ann_R(M),K)\\
&=\inf\{i\geq0\mid\Ext^i_R(M,K)\neq0\}\\
&\leq n.
\end{array}\]
\end{proof}
The following Proposition is a generalization of \cite[Proposition 16]{MS}.
\begin{prop}\label{p4}
Let $K$ be a semidualizing $R$--module, $\fc_1$ and $\fc_2$ two $\gk$Gorenstein ideals.
Assume that $M_1$,$M$, and $M_2$ are $R$--modules such that $M_1\underset{\fc_1}{\thicksim}M$ and $M\underset{\fc_2}{\thicksim}M_2$.
Denote the common value of  \emph{$\gr(\fc_1)$} and \emph{$\gr(\fc_2)$} by $n$ (see Lemma \ref{l2}). Then \emph{$\Ext^i_R(M_1,K)\cong
\Ext^i_R(M_2,K)$} for all $i$, $i>n$.
\end{prop}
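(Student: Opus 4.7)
The strategy is a two-step reduction. Write $R_j := R/\fc_j$ for $j = 1, 2$. Since each $\fc_j$ is $\gk$Gorenstein of grade $n$, $\gk$perfectness together with Theorem \ref{G3} forces $\Ext^q_R(R_j, K) = 0$ for $q \neq n$, while cyclicity gives $\Ext^n_R(R_j, K) \cong R_j$. Lemma \ref{G1} then supplies a natural isomorphism $\Ext^{n+i}_R(-, K) \cong \Ext^i_{R_j}(-, R_j)$ on the category of $R_j$-modules. Applying this in particular to $M_j$ reduces the proposition to the statement
\[
\Ext^k_{R_1}(M_1, R_1) \cong \Ext^k_{R_2}(M_2, R_2) \quad \text{for all } k \geq 1.
\]

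To compute the left-hand side, exploit the horizontal linkage: $\underline{M_j} \cong \underline{\lambda_{R_j} M}$. Apply $\Hom_{R_j}(-, R_j)$ to the sequences (\ref{2}) and (\ref{1}) for $M$ over $R_j$; the middle terms $P_0^*$ and $P_1^*$ are $R_j$-projective, so splicing the resulting long exact sequences yields, for $k \geq 2$,
\[
\Ext^k_{R_j}(M_j, R_j) \cong \Ext^k_{R_j}(\lambda_{R_j} M, R_j) \cong \Ext^{k-1}_{R_j}\bigl(\Hom_{R_j}(M, R_j), R_j\bigr),
\]
whereas the initial degrees give
\[
\Ext^1_{R_j}(M_j, R_j) \cong \coker\!\bigl(M \to \Hom_{R_j}(\Hom_{R_j}(M, R_j), R_j)\bigr),
\]
the arrow being the canonical biduality over $R_j$.

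Now feed this back into Lemma \ref{G1}. At degree zero it identifies $\Hom_{R_j}(M, R_j)$ with $E := \Ext^n_R(M, K)$ as $R_j$-modules; a second application to the $R_j$-module $E$ gives
\[
\Ext^{k-1}_{R_j}(E, R_j) \cong \Ext^{n+k-1}_R(E, K) = \Ext^{n+k-1}_R(\Ext^n_R(M, K), K),
\]
which is manifestly independent of $j$. This settles the case $k \geq 2$. For $k = 1$ the same identifications give $\Hom_{R_j}(\Hom_{R_j}(M, R_j), R_j) \cong \Ext^n_R(\Ext^n_R(M, K), K)$, and the naturality of Lemma \ref{G1} forces the biduality map to correspond to the intrinsic $R$-linear canonical map $M \to \Ext^n_R(\Ext^n_R(M, K), K)$, the same for $j = 1$ and $j = 2$. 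Hence its cokernel, and therefore $\Ext^1_{R_j}(M_j, R_j)$, is independent of $j$.

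The main obstacle is the low-degree case $k = 1$: the clean Ext shifting along (\ref{2}) and (\ref{1}) leaves a genuine cokernel, and identifying it across $j$ requires the naturality of the change-of-rings isomorphism of Lemma \ref{G1}. For $k \geq 2$ the argument is essentially bookkeeping with long exact sequences and two applications of Lemma \ref{G1}.
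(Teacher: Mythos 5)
Your reduction via Lemma \ref{G1} and your treatment of the degrees $k\geq 2$ coincide with the paper's argument: syzygy-shift along (\ref{2}), identify $\Hom_{R_j}(M,R_j)$ with $\Ext^n_R(M,K)$ using cyclicity of $\Ext^n_R(R_j,K)$, and apply Lemma \ref{G1} once more to land in $\Ext^{n+k-1}_R(\Ext^n_R(M,K),K)$, which is visibly independent of $j$. That part is fine.

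The gap is exactly where you locate "the main obstacle," in degree $k=1$, and your resolution of it does not work as stated. You assert that "the naturality of Lemma \ref{G1} forces the biduality map to correspond to the intrinsic $R$-linear canonical map $M\to\Ext^n_R(\Ext^n_R(M,K),K)$." But Lemma \ref{G1} is natural in the module variable for a \emph{fixed} ideal $I$; it says nothing about how constructions over $R/\fc_1$ compare with constructions over $R/\fc_2$. Moreover, there is no obviously defined "intrinsic" map $M\to\Ext^n_R(\Ext^n_R(M,K),K)$ to which both biduality maps could be compared --- producing such a map is itself the problem, and the identification $\Ext^n_R(R_j,K)\cong R_j$ used to transport the biduality map is non-canonical (it depends on a choice of generator of a cyclic module). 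What is needed, and what the paper supplies, is a common intermediary: since $\gr(\fc_1\cap\fc_2,R)=n$, one can choose an $R$-regular (hence $K$-regular) sequence $\underline{x}\subseteq\fc_1\cap\fc_2$ of length $n$, set $\overline{R}=R/\underline{x}R$ and $\overline{K}=K/\underline{x}K$, and check that $\Hom_{\overline{R}}(N,\overline{K})\cong\Hom_{R_j}(N,R_j)$ for every $R_j$-module $N$ and for both $j$. The $R_j$-biduality map of $M$ is then identified, for each $j$, with the single map $f:M\to\Hom_{\overline{R}}(\Hom_{\overline{R}}(M,\overline{K}),\overline{K})$, so $\Ext^1_{R_1}(M_1,R_1)\cong\coker(f)\cong\Ext^1_{R_2}(M_2,R_2)$. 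Without this (or an equivalent explicit construction of the comparison map), the $k=1$ case remains unproved.
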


\begin{proof}
By Lemma \ref{G1}, it is enough to show that $\Ext^i_{R/{\fc_1}}(M_1,R/{\fc_1})\cong\Ext^i_{R/{\fc_2}}(M_2,R/{\fc_2})$ for all $i>0$.
Note that, by (\ref{m}), one has
\[\begin{array}{rl}
\Ext^1_{R/{\fc_1}}(M_1,R/{\fc_1})&\cong\Ext^1_{R/{\fc_1}}(\lambda_{R/{\fc_1}}M,R/{\fc_1})\\
&\cong\Ext^2_{R/{\fc_1}}(\Tr_{R/{\fc_1}}M,R/{\fc_1})\\
&\cong\coker e_M,
\end{array}\]
where $e_M:M\rightarrow\Hom_{R/{\fc_1}}(\Hom_{R/{\fc_1}}(M,R/{\fc_1}),R/{\fc_1})$ is the canonical evaluation map.
Note that $\gr(\fc_1\cap \fc_2,R)=n$ (c.f. \cite[Proposition 1.2.10]{BH}).
Let $\underline{x}\subseteq\fc_1\cap\fc_2$
be an $R$-regular sequence of length $n$. As $K$ is a semidualizing $R$--module, $\underline{x}$ is also a $K$-regular sequences.
Set $\overline{R}=R/{\underline{x}R}$ and $\overline{K}=K/{\underline{x}K}$. By the standard isomorphisms and the fact that $\fc_1$ is
 a $\gk$Gorenstein, $\Hom_{\overline{R}}(R/{\fc_1},\overline{K})\cong\Ext^n_R(R/{\fc_1},K)\cong R/{\fc_1}$.
Therefore, for each $R/{\fc_1}$--module $N$,
\[\begin{array}{rl}
\Hom_{\overline{R}}(N,\overline{K})&\cong\Hom_{\overline{R}}(N\otimes_{\overline{R}}R/{\fc_1},\overline{K})\\
&\cong\Hom_{\overline{R}}(N,\Hom_{\overline{R}}(R/{\fc_1},\overline{K}))\\
&\cong\Hom_{R/{\fc_1}}(N,R/{\fc_1}).
\end{array}\]
Hence, we have the commutative diagram
$$\begin{CD}
&&&&&&&&\\
\ \ &&&& 0@>>>M@>f>>M^{\triangledown\triangledown}@>>>\coker(f)@>>>0&  \\
&&&&&& @VV{=}V @VV{\cong}V \\
\ \  &&&&0@>>>M@>e_M>>M^{**}@>>>\Ext^1_{R/{\fc_1}}(M_1,R/{\fc_1})@>>>0&\\
\end{CD}$$\\
with exact rows, where $M^*=\Hom_{R/{\fc_1}}(M,R/{c_1})$ and $M^{\triangledown}=\Hom_{\overline{R}}(M,\overline{K})$.
Thus, we have  $\coker(f)\cong\Ext^1_{R/{\fc_1}}(M_1,R/{\fc_1})$.
Similarly, $\coker(f)\cong\Ext^1_{R/{\fc_2}}(M_2,R/{\fc_2})$.

Now suppose that $i>1$. As $M_1\cong\lambda_{R/{\fc_1}}M$, it follows, from (\ref{2}) and Lemma \ref{G1}, that
 $\Omega_{R/{\fc_1}}M_1
\cong\Hom_{R/{\fc_1}}(M,R/{\fc_1})\cong\Ext^n_R(M,K)$. Therefore, one has
\[\begin{array}{rl}
\Ext^i_{R/{\fc_1}}(M_1,R/{\fc_1})&\cong\Ext^{i-1}_{R/{\fc_1}}(\Omega_{R/{\fc_1}}M_1,R/{\fc_1})\\
&\cong\Ext^{i-1}_{R/{\fc_1}}(\Ext^n_R(M,K),R/{\fc_1})\\
&\cong\Ext^{i-1+n}_R(\Ext^n_R(M,K),K),
\end{array}\]
where the last isomorphism is obtained by Lemma \ref{G1}.
By the same argument, one has $\Ext^i_{R/{\fc_2}}(M_2,R/{\fc_2})\cong\Ext^{i-1+n}_R(\Ext^n_R(M,K),K)$.
\end{proof}
\begin{cor}
Let $(R,\fm)$ be a Cohen-Macaulay local ring with canonical module $\omega_R$. Assume that $\fc_1$ and $\fc_2$ are
Gorenstein ideals and that
$M_1$,$M$, and $M_2$ are $R$--modules such that $M_1\underset{\fc_1}{\thicksim}M$ and $M\underset{\fc_2}{\thicksim}M_2$.
 Set \emph{$n=\dim_R(M_1)=\dim_R(M_2)$}. Then \emph{$\H^i_{\fm}(M_1)\cong\H^i_{\fm}(M_2)$}, for all $i$, $i<n$.
\end{cor}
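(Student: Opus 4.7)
The plan is to reduce the question to Proposition \ref{p4} applied with the semidualizing module $K=\omega_R$, and then translate the resulting $\Ext$-isomorphisms into local cohomology isomorphisms via the Local Duality Theorem. First I would verify the hypotheses of Proposition \ref{p4}. Since $R$ is Cohen-Macaulay with canonical module $\omega_R$, the module $\omega_R$ is semidualizing. If $\fc$ is a Gorenstein ideal of grade $g$ in $R$, then $R/\fc$ is a Gorenstein ring, so $\Ext^i_R(R/\fc,\omega_R)=0$ for $i\ne g$ while $\Ext^g_R(R/\fc,\omega_R)\cong\omega_{R/\fc}\cong R/\fc$ is cyclic; thus $\fc$ is a $\gk$Gorenstein ideal for $K=\omega_R$. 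By Lemma \ref{l2}, $\gr(\fc_1)=\gr(\fc_2)=:g$, and Proposition \ref{p4} then yields
\begin{equation*}
\Ext^i_R(M_1,\omega_R)\cong\Ext^i_R(M_2,\omega_R)\quad\text{for all } i>g.
\end{equation*}

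Next I would carry out a grade-dimension comparison to line up the indices. Since $\fc_j\subseteq\Ann_R(M_j)$ and $R/\fc_j$ is Cohen-Macaulay of dimension $d-g$ (where $d=\dim R$), we get $n=\dim_R(M_j)\le\dim(R/\fc_j)=d-g$, so $d-n\ge g$. Consequently, for every $i$ with $i<n$ we have $d-i>g$, and the Ext-isomorphism above applies to $\Ext^{d-i}_R(M_j,\omega_R)$.

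Finally I would invoke the Local Duality Theorem. Passing to the $\fm$-adic completion if necessary (which preserves local cohomology of finite modules and preserves the Ext-isomorphism after $-\otimes_R\widehat{R}$), we have the Matlis-dual identification $\H^i_{\fm}(M_j)\cong\Hom_R(\Ext^{d-i}_R(M_j,\omega_R),E(R/\fm))$. Since Matlis duality is faithful and exact, the isomorphism of $\Ext^{d-i}$ modules transfers to the desired isomorphism $\H^i_{\fm}(M_1)\cong\H^i_{\fm}(M_2)$ in the range $i<n$. The main obstacle is really only bookkeeping: confirming that the classical notion of Gorenstein ideal falls under the $\gk$Gorenstein umbrella for $K=\omega_R$, and establishing the inequality $n\le d-g$ that aligns the range of Ext-vanishing with the range of local cohomology one wants to control.
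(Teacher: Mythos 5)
Your proposal is correct and follows exactly the route the paper intends: the paper's own proof is just the one-line remark that the claim follows from Proposition \ref{p4} and the Local Duality Theorem, and you have supplied precisely the missing bookkeeping (Gorenstein ideals are $\gk$Gorenstein for $K=\omega_R$, the grade/dimension comparison $n\le d-g$, and the Matlis-dual transfer of the $\Ext$-isomorphisms). No discrepancies to report.
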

\begin{proof}
By Proposition \ref{p4} and the Local Duality Theorem the assertion is obvious.
\end{proof}
We end the paper by another Corollary which in its first part we generalize \cite[Proposition 15]{MS}.
\begin{cor}\label{end}
Let $K$ be a semidualizing $R$--module and let $\fc_1$ and $\fc_2$ be two $\gk$Gorenstein ideals.
Assume that $M_1$,$M$, and $M_2$ are $R$--modules such that $M_1\underset{\fc_1}{\thicksim}M$ and $M\underset{\fc_2}{\thicksim}M_2$.
Then the following statements hold true.
\begin{itemize}
       \item[(i)]{\emph{$\gkd_R(M_1)=\gkd_R(M_2)$};}
        \item[(ii)]{If these dimensions are finite then $M_1$ satisfies \emph{$\widetilde{S}_k$} if and only if $M_2$
        satisfies \emph{$\widetilde{S}_k$}.}
\end{itemize}
\end{cor}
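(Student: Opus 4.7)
My plan is to derive both parts from Proposition~\ref{p4} and Lemma~\ref{l2}, combined with the characterizations of $\gk$dimension in Theorems~\ref{G3} and~\ref{G2}.

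For part~(i), I would establish two subclaims: equivalence of finiteness, and numerical equality when the dimensions are finite. For numerical equality, Lemma~\ref{l2} yields $\gr_R(M_1)=\gr_R(M_2)=n$. Since $K$ is semidualizing, grades with coefficients in $K$ agree with grades over $R$, so $\Ext^i_R(M_j,K)=0$ for $i<n$ while $\Ext^n_R(M_j,K)\neq 0$ for $j=1,2$. Proposition~\ref{p4} supplies $\Ext^i_R(M_1,K)\cong\Ext^i_R(M_2,K)$ for every $i>n$, so the sets $\{i\geq 0:\Ext^i_R(M_j,K)\neq 0\}$ coincide for $j=1,2$. Theorem~\ref{G3}(i) then identifies $\gkd_R(M_j)$ with the supremum of this common set whenever $\gkd_R(M_j)$ is finite, giving the desired equality. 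For the equivalence of finiteness, I would apply Theorem~\ref{G2}(ii) to each $\gk$Gorenstein ideal $\fc_j$: since $\Ext^n_R(R/\fc_j,K)\cong R/\fc_j$, finiteness of $\gkd_R(M_j)$ is equivalent to finiteness of $\gd_{R/\fc_j}(M_j)$, and the chain of horizontal linkages $M_1\cong\lambda_{R/\fc_1}M$, $M\cong\lambda_{R/\fc_2}M_2$ propagates finiteness of G-dimension through the intermediate module~$M$.

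For part~(ii), assume both $\gk$dimensions are finite. The strategy is to prove $\depth_{R_\fp}((M_1)_\fp)=\depth_{R_\fp}((M_2)_\fp)$ at every $\fp\in\Spec R$, from which the $\widetilde{S}_k$ equivalence is immediate. At each $\fp$ we have $\gkd_{R_\fp}((M_j)_\fp)\leq\gkd_R(M_j)<\infty$, so the local form of Theorem~\ref{G3}(ii) gives $\depth_{R_\fp}((M_j)_\fp)=\depth R_\fp-\gkd_{R_\fp}((M_j)_\fp)$ when $(M_j)_\fp\neq 0$. It then suffices to show $\gkd_{R_\fp}((M_1)_\fp)=\gkd_{R_\fp}((M_2)_\fp)$ at each $\fp$, which I obtain by invoking part~(i) at the localization $R_\fp$: $K_\fp$ remains semidualizing over $R_\fp$, $(\fc_j)_\fp$ stays $\mathrm{G}_{K_\fp}$-Gorenstein (by localizing the grade identity and the cyclic $\Ext^n$-condition), and the horizontal linkages descend to $R_\fp$ up to projective summands.

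The principal obstacle lies in the faithful localization of horizontal linkage: a stable module may acquire projective direct summands under localization, so horizontal linkage need not survive literally. However, projective summands have $\gk$dimension zero and contribute only to $\Ext^0$, so they affect neither the Ext isomorphisms in Proposition~\ref{p4} nor the supremum characterization in Theorem~\ref{G3}; working modulo such summands, the localized argument still delivers the required equality of local $\gk$dimensions. A secondary technical point, handling the case $\gkd_R(M_1)=\infty$ in part~(i), again rests on Theorem~\ref{G2}(ii) applied through the intermediate module~$M$.
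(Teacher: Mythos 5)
Your part (i) splits into the same two subclaims as the paper, and your numerical-equality half (the nonvanishing sets of $\Ext^i_R(-,K)$ for $M_1$ and $M_2$ coincide by Lemma \ref{l2} and Proposition \ref{p4}, then take suprema via Theorem \ref{G3}(i)) is sound and even a little cleaner than the paper's case split on whether $\gkd_R(M_1)=n$. The finiteness half, however, rests on a false principle: you assert that the horizontal linkages ``propagate finiteness of G-dimension through the intermediate module $M$'', i.e.\ that $\gd_{R/\fc_1}(M_1)<\infty$ iff $\gd_{R/\fc_1}(M)<\infty$. Horizontal linkage preserves G-dimension \emph{zero}, but not finiteness of positive G-dimension: since $\Omega(\lambda N)\cong N^*$ by (\ref{2}), finiteness of $\gd(\lambda N)$ is governed by the dual $N^*$, not by $N$ itself --- this is exactly the content of Proposition \ref{P1} and its corollary, where $\gd_R(\lambda M)<\infty$ is shown to be equivalent to $\gd_R(\Ext^n_R(M,R))<\infty$, a condition that is not automatic outside the Gorenstein setting. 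The paper avoids this by observing that \emph{both} $M_1$ and $M_2$ have first syzygies isomorphic to the \emph{same} module, $\Omega_{R/\fc_j}M_j\cong\Hom_{R/\fc_j}(M,R/\fc_j)\cong\Ext^n_R(M,K)$ by (\ref{2}) and Lemma \ref{G1}, whence Theorem \ref{G2} makes each $\gkd_R(M_j)<\infty$ equivalent to $\gkd_R(\Ext^n_R(M,K))<\infty$. You need this replacement.

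For part (ii) your route is genuinely different from the paper's, and it does not go through. You aim to prove $\depth_{R_\fp}((M_1)_\fp)=\depth_{R_\fp}((M_2)_\fp)$ at every prime, but this is false in general: at a prime $\fp\in V(\fc_1)\setminus\Supp M$ the localized presentation of $M$ over $(R/\fc_1)_\fp$ splits, so $(M_1)_\fp=(\lambda_{R/\fc_1}M)_\fp$ is a nonzero free $(R/\fc_1)_\fp$-module; hence $\Supp M_1=V(\fc_1)$ while $\Supp M_2=V(\fc_2)$, and at $\fp\in V(\fc_1)\setminus V(\fc_2)$ one depth is finite and the other is infinite. This also shows that the projective summands created by localization cannot be dismissed as you propose: they are invisible to $\Ext^i$ for $i>0$, but they change local depths, which is precisely what $\widetilde{S}_k$ measures. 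The paper's argument sidesteps localization entirely: by Corollary \ref{A} applied over $R/\fc_j$, the module $M_j$ satisfies $\widetilde{S}_k$ iff $\rgr(\lambda_{R/\fc_j}M_j)=\rgr_{R/\fc_j}(M)\geq k$, and Lemma \ref{G1} gives $\Ext^i_{R/\fc_1}(M,R/\fc_1)\cong\Ext^{n+i}_R(M,K)\cong\Ext^i_{R/\fc_2}(M,R/\fc_2)$ for all $i$, so the two reduced grades of the common middle module $M$ coincide. You should adopt this reduction (noting that it reads $\widetilde{S}_k$ relative to the quotient rings $R/\fc_j$, which is the interpretation under which the statement is correct).
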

\begin{proof}
(i). By Theorem \ref{G2}, $\gkd_R(M_1)<\infty$ if and only if $\gd_{R/{\fc_1}}(M_1)<\infty$. As by (\ref{2}) and Lemma \ref{G1},
$\Omega_{R/{\fc_1}}M_1
\cong\Hom_{R/{\fc_1}}(M,R/{\fc_1})\cong\Ext^n_R(M,K)$, it follows that $\gkd_R(M_i)<\infty$ if and only if
$\gkd_R(\Ext^n_R(M,K))<\infty$ for $i=1, 2$.

Now, let $M_1$ and $M_2$ have
finite $\gk$dimensions. Set $n=\gr(\fc_1)=\gr(\fc_2)$. Note that, by Lemma \ref{l2}, $n=\gr(M_1)\leq\gkd_R(M_1)$.
If $n=\gkd_R(M_1)$ then $M_1$ is $\gk$perfect and so
the assertion is obvious by Proposition \ref{P3}. Now suppose that $\gkd_R(M_1)>n$. Hence by Theorem \ref{G3},
and Proposition \ref{p4} the assertion is obvious.

(ii). By Lemma \ref{G1}, $\Ext^i_{R/{\fc_1}}(M,R/{\fc_1})\cong\Ext^{n+i}_R(M,K)\cong\Ext^i_{R/{\fc_2}}(M,R/{\fc_2})$
for all $i\geq 0$ and so
 $\rgr_{R/{\fc_1}}(M)=\rgr_{R/{\fc_2}}(M)$. Hence the assertion is obvious by Corollary \ref{A}.
\end{proof}
{\bf Acknowledgement.} The authors thank J. R. Strooker and A. M.
Simon for their several valuable comments. Indeed they recognized
that the reduced grade has been defined first by Hyman Bass. They
also mentioned a flaw in one of our main result and we correct it in
its present form Theorem \ref{t}.

\bibliographystyle{amsplain}

\end{document}